\g@addto@macro{\UrlBreaks}{\do\/\do-\do\_} 
\newtheorem{theorem}{Theorem}[section]
\newtheorem{lemma}[theorem]{Lemma}
\newtheorem{remark}[theorem]{Remark}
\newtheorem{definition}[theorem]{Definition}
\DeclareMathOperator{\adj}{adj}
\DeclareMathOperator{\dx}{dx}
\DeclareMathOperator{\ds}{ds}
\DeclareMathOperator{\diver}{div}
\DeclareMathOperator{\tr}{tr}
\newcommand{\R}{\mathbb{R}} 
\newcommand{\N}{\mathbb{N}} 
\newcommand{\M}{\mathbb{M}}
\newcommand{\X}{\mathbb{X}}
\newcommand{\euler}{\mathrm{e}}
\providecommand{\keywords}[1]
{
  {\small	
  \textbf{{Keywords---}} #1}
}
\author[1]{Gesa Sarnighausen}
\author[2]{Tram Thi Ngoc Nguyen}
\author[3]{Thorsten Hohage}
\author[4]{Mangalika Sinha}
\author[5]{Sarah K\"oster}
\author[6]{Timo Betz}
\author[7]{Ulrich Sebastian Schwarz}
\author[8]{Anne Wald}
\affil[1]{Institute for Numerical and Applied Mathematics, University of G\"ottingen, Germany (\texttt{g.sarnighausen@math.uni-goettingen.de})}
\affil[2]{Max Planck Institute for Solar System Research, G\"ottingen, Germany (\texttt{nguyen@mps.mpg.de})}
\affil[3]{Institute for Numerical and Applied Mathematics, University of G\"ottingen, Germany (\texttt{t.hohage@math.uni-goettingen.de})}
\affil[4]{Institute for X-Ray Physics, University of G\"ottingen, Germany (\texttt{mangalika.sinha@uni-goettingen.de})}
\affil[5]{Institute for X-Ray Physics, University of G\"ottingen, Germany (\texttt{sarah.koester@phys.uni-goettingen.de})}
\affil[6]{Third Institute of Physics, Biophysics/ Complex Systems, University of G\"ottingen, Germany (\texttt{timo.betz@phys.uni-goettingen.de})}
\affil[7]{Institute for Theoretical Physics, Heidelberg University, Germany (\texttt{schwarz@thphys.uni-heidelberg.de})}
\affil[8]{Institute for Numerical and Applied Mathematics, University of G\"ottingen, Germany (\texttt{a.wald@math.uni-goettingen.de})}
\title{Traction force microscopy for linear and nonlinear elastic materials as a parameter identification inverse problem}
\begin{document}

\maketitle

\begin{abstract}
\noindent
Traction force microscopy is a method widely used in biophysics and cell biology to determine forces that biological cells apply to their environment. In the experiment, the cells adhere to a soft elastic substrate, which is then deformed in response to cellular traction forces. The inverse problem consists in computing the traction stress applied by the cell from microscopy measurements of the substrate deformations. In this work, we consider a linear model, in which 3D forces are applied at a 2D interface, called 2.5D traction force microscopy, and a nonlinear pure 2D model, from which we directly obtain a linear pure 2D model. All models lead to a linear resp.~nonlinear parameter identification problem for a boundary value problem of elasticity. We analyze the respective forward operators and conclude with some numerical experiments for simulated and experimental data.
\end{abstract}

\keywords{inverse problems, parameter identification, elasticity, traction force microscopy, regularization}


\section{Introduction}


Cells are the fundamental building blocks of all living systems. They consist of a multitude of biomolecules that continuously interact with each other and thereby form distinct structures. In particular, a large number of proteins are involved in building the cytoskeleton, which is a system of different types of filaments that together give mechanical stability
to cells, which they need to move, divide and form tissue \cite{lorenz2022multiscale}. Biological systems are active in the sense that they continuously consume energy and thus keep the system away from thermodynamic equilibrium. This not only
allows them to form structures that are not possible in equilibrium, it also allows them to quickly
adapt to changes in their environment \cite{ahmed2015active,banerjee2020actin}. A particular point in case
are molecular motors, which are large proteins that use energy to undergo conformational changes that lead
to force generation. Together with other force-generating processes like polymerization, 
this activity allows cells to change their shape, migrate, divide, and adhere to surfaces
and other cells. Because it is difficult to measure cell forces inside cells, it has become
a standard procedure in biophysics and cell biology to measure them at the interface between
the cells and their environment, with a procedure called traction force microscopy (TFM) \cite{TFMPhysicsBiologyStyle,Schwarz2015,Lekka2021,zancla2022primer,reviewDenisin}.

In TFM, cells are cultured on soft elastic substrates. Fiducial markers, i.e., small fluorescent beads, are embedded in the substrate and the displacement of the substrate is calculated by comparing observed images of the markers first with and again without applied traction stresses (compare Figure~\ref{fig:tfmsetting}). The inverse problem then consists in computing the traction stresses given the displacement, which are related via the boundary value problem (BVP) of elasticity for the substrate. Depending on the chosen material law the BVP can be linear or nonlinear. Hence, we refer to linear or nonlinear TFM.

When cells are plated on a planar substrate coated with proteins from the extracellular matrix, they spread out to increase surface contact with the substrate and to establish stable adhesions. Thus, they tend to thin out, as a result of which the cellular forces are predominantly in-plane. This effect is further enhanced by the fact that for 
adhesion to a planar substrate, the cytoskeleton typically flows parallel to the substrate, from 
the periphery to the center, thus also generating mainly tangential (and centrally directed)
forces. This explains the name 'traction force', i.e., the force acting tangentially on the substrate plane,
similar to a car on the road. However, in general, cellular traction forces need not be tangential,
because several aspects are leading also to forces in the perpendicular direction, including
the large nucleus sticking out into the third dimension, cortical tension pulling everywhere in the cell
envelope, and the internal organization of the adhesion sites \cite{TFMPhysicsBiologyStyle,reviewDenisin}.
For example, cancer cells often try to invade the substrate, and then sometimes
use their nucleus to generate perpendicular forces onto the substrate \cite{kristal2013metastatic}.

The output of TFM typically is a map of traction stress measured in physical units of Pa=N/m$^2$.
A typical value for cellular traction forces is kPa, which makes sense because this is
also the typical value of the stiffness of an extracellular matrix. Only if these two 
numbers are of similar magnitude, the cell and its environment are at a reasonable
mechanical balance to each other \cite{schwarz2017mechanobiology}. Although from the viewpoint
of elasticity theory, one measures stress and not force, this technique is called traction force microscopy because the whole notion of this technique lies in understanding the forces exerted by the cells on the substrate.  Both the term 'traction force' and 'traction stress' are thus complementary to each other since they provide a complete picture of cell mechanics, demonstrating both the magnitude and distribution of forces.

\begin{figure}[h]
    \centering
    \begin{subfigure}{0.75\textwidth}
    \centering
    \def\svgwidth{\linewidth} 
\begingroup%
  \makeatletter%
  \providecommand\color[2][]{%
    \errmessage{(Inkscape) Color is used for the text in Inkscape, but the package 'color.sty' is not loaded}%
    \renewcommand\color[2][]{}%
  }%
  \providecommand\transparent[1]{%
    \errmessage{(Inkscape) Transparency is used (non-zero) for the text in Inkscape, but the package 'transparent.sty' is not loaded}%
    \renewcommand\transparent[1]{}%
  }%
  \providecommand\rotatebox[2]{#2}%
  \newcommand*\fsize{\dimexpr\f@size pt\relax}%
  \newcommand*\lineheight[1]{\fontsize{\fsize}{#1\fsize}\selectfont}%
  \ifx\svgwidth\undefined%
    \setlength{\unitlength}{512.74287667bp}%
    \ifx\svgscale\undefined%
      \relax%
    \else%
      \setlength{\unitlength}{\unitlength * \real{\svgscale}}%
    \fi%
  \else%
    \setlength{\unitlength}{\svgwidth}%
  \fi%
  \global\let\svgwidth\undefined%
  \global\let\svgscale\undefined%
  \makeatother%
  \begin{picture}(1,0.22073141)%
    \lineheight{1}%
    \setlength\tabcolsep{0pt}%
    \put(0,0){\includegraphics[width=\unitlength,page=1]{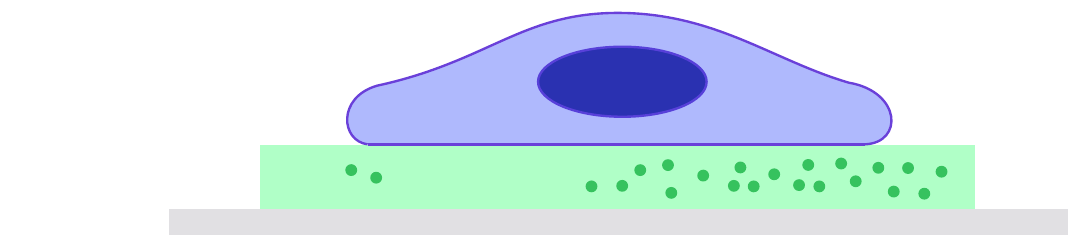}}%
    \put(0.04492841,0.0093464){\color[rgb]{0,0,0}\makebox(0,0)[lt]{\lineheight{1.25}\smash{\begin{tabular}[t]{l}glass\end{tabular}}}}%
    \put(0.04628064,0.04952529){\makebox(0,0)[lt]{\lineheight{1.25}\smash{\begin{tabular}[t]{l}substrate\end{tabular}}}}%
    \put(0.31728025,0.19998664){\makebox(0,0)[lt]{\lineheight{1.25}\smash{\begin{tabular}[t]{l}cell\end{tabular}}}}%
    \put(0.03623911,0.12690158){\makebox(0,0)[lt]{\lineheight{1.25}\smash{\begin{tabular}[t]{l}marker beads\end{tabular}}}}%
    \put(0,0){\includegraphics[width=\unitlength,page=2]{setup.pdf}}%
  \end{picture}%
\endgroup%

    \caption{Experimental setup for TFM.}
    \end{subfigure}
    \\
    \hfill
    \begin{subfigure}[T]{0.4\textwidth}
    \centering
    \includegraphics[width=\linewidth]{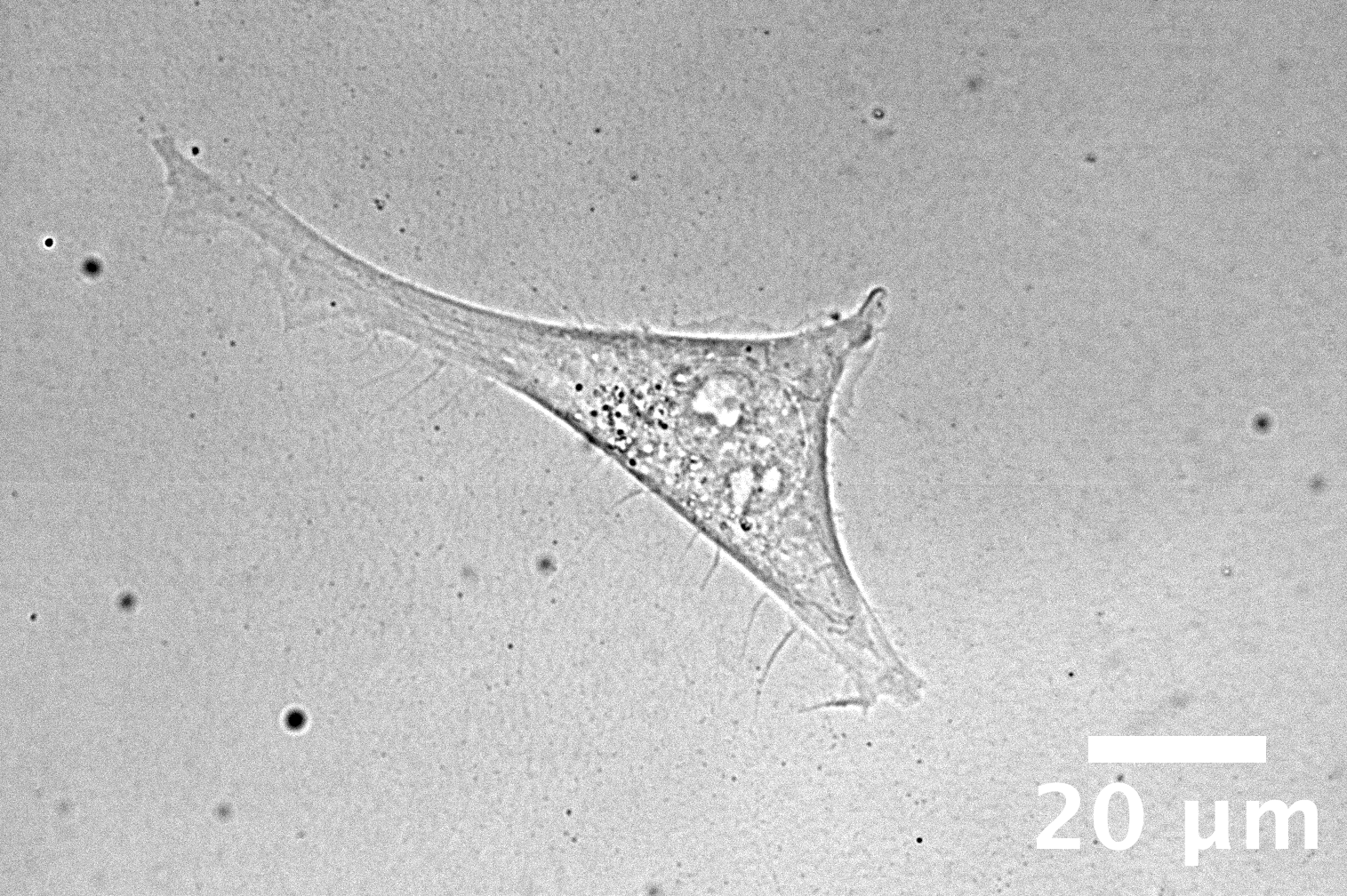}
    \vspace{4ex}
    \caption{Microscopy image of a cell.}
    \label{fig:microscopyCellImage}
    \end{subfigure} \hfill
    \begin{subfigure}[T]{0.45\textwidth}
    \centering
    \includegraphics[width=\linewidth]{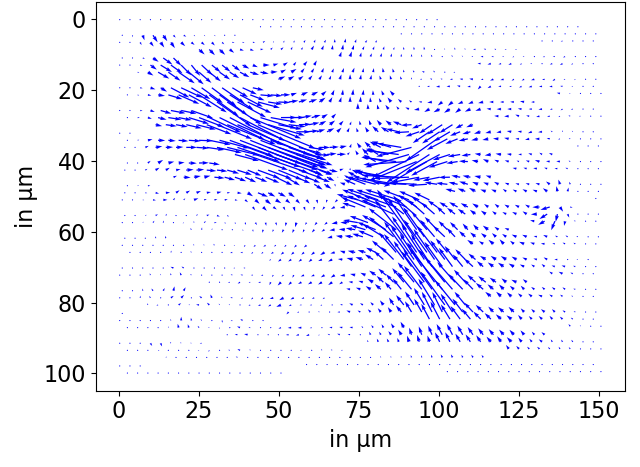}
    \caption{Image of the deformation.}
    \label{fig:microscopyCellDeformation}
    \end{subfigure}
    \caption{(a) Experimental setup for TFM. Small fiducial markers (dark green) are embedded in a soft elastic substrate (light green). The cell (blue; nucleus in dark blue) adheres to the substrate and applies traction stresses. For 2D TFM one often only uses markers in a thin layer just underneath the substrate surface, as only tangential deformations are being considered. For 2.5D TFM the markers have to be distributed in the whole substrate, because one also has to track perpendicular deformations. In practice, the substrate is much thicker than the force generating structures in the cell (about $50$ $\mu$m vs. a few $\mu$m). The markers are typically imaged with an inverse optical microscope from below. \\
    (b) Microscopy image of a cell plated on a planar substrate.
    (c) Image of the corresponding deformation.}
    \label{fig:tfmsetting}
\end{figure}

Traction force observations were first developed in 1980 by Harris et al., who were the first to culture cells on a substrate and observe the traction stresses of the cells by wrinkling and elastic distortion of the substrate \cite{Harris1980, Harris1981}. Over 10 years later Dembo et al.~developed the first TFM method by finding a way to quantitatively compute the applied stresses from the measured displacement, first for thin films \cite{Dembo1996} 
and then for thick ones \cite{Dembo1999}. For thick films, they considered a linear 2D model, ignoring the normal component, and then used Green's functions from the analytical Boussinesq solution for the elastic halfspace, i.e., they assume an infinite substrate with homogeneous Dirichlet boundary conditions at infinity, leading to an ill-posed inverse problem, and solved it in real space.
It is also possible to solve the inverse problem in Fourier space without regularization \cite{butler2002}. Since the inverse problem is ill-posed, noisy data can lead to severe artifacts in the reconstruction if no regularization is used \cite{Schwarz2002}. Therefore regularization was also incorporated into the Fourier space setting \cite{Sabass2008,Plotnikov2014}. This method is now mostly used in practice to reconstruct traction stresses. Note, that even if this approach is called 2D TFM, due to the use of the Boussinesq solution the model is not purely 2D.

If the substrate cannot be modeled by a linear material law, the standard approach does not work anymore since there exists no analytical Boussinesq solution for the underlying BVP. In this case, finite element methods can be used, e.g., by solving a PDE-constrained optimization problem as in Ambrosi et al.~\cite{Ambrosi2006,Ambrosi2009}. Michel et al. have proposed a mathematical framework both for Ambrosi's solution approach and the standard Fourier method~\cite{Michel2013}.
It is further possible to solve the BVP of elasticity with respect to the stress $\sigma$ and then directly compute the traction stresses $t$ by the relationship $t = \sigma n$, where $n$ is the normal vector \cite{Hur2009,Hur2012}.

TFM can also be formulated as a direct problem, by calculating stress directly from strain \cite{Stacey2009,Franck2007,BarKochba2014,schwarzComparison}. However, high-quality and three-dimensional measurements are necessary because the direct method includes taking derivatives of the noisy measured data. This approach is often used in 2.5D and 3D due to the lack of an analytical solution for the respective BVP.

Another strategy is model-based TFM, when traction forces are not reconstructed at the 
substrate surface, but directly fitted against a physical model of force generation.
While this makes the results depend on the chosen model, it allows us to
gain information about how the traction stresses are distributed in the cell by incorporating modeling assumptions on the cells \cite{Tambe2010,Tambe2013,Ng2014,Soine2015}. Similar to direct TFM, high quality measurements are necessary to obtain reasonable results. 

During the last three decades since its invention, TFM has matured into a large research area, 
and many different experimental setups and solution strategies have been devised. To
systematize the field, it is helpful to make a few distinctions. One
important distinction is between direct and inverse approaches; while in the direct method
stresses are directly calculated from strains, in the inverse method, stresses are estimated
as minimizers of the differences between predicted and observed displacements \cite{schwarzComparison}.
Another helpful distinction can be made
between 2D, 2.5D, compare
Figure~\ref{fig:tfmsetting}, and 3D TFM \cite{Schwarz2015}. The 2D setting is the simplest. A cell is placed onto a planar soft elastic substrate with marker beads
and one follows their deformations. By ignoring the normal component of cell forces and displacement, it is assumed that the cell forces are just two-dimensional. 3D TFM refers to a cell completely embedded inside a 3D matrix, e.g.~hydrogel gel, with beads all around it. In principle, it is then possible to determine three-dimensional traction forces around the whole cell. However, the imaging of the displacement is very difficult and time-consuming in 3D. More importantly, the material law for 3D material allowing for 3D TFM is typically very complex and often not even elastic. Therefore a good
compromise is 2.5D TFM. As in 2D TFM, a cell is placed on a planar substrate as shown in Figure~\ref{fig:tfmsetting}. However, marker beads are not only tracked close to the surface but throughout the whole substrate, yielding a three-dimensional displacement. In this article, we will consider both pure 2D and 2.5D TFM.

Even though TFM is widely used in biophysics and cell biology, the mathematical literature on TFM is scarce~\cite{Ambrosi2006,Michel2013}. This article aims to develop a rigorous mathematical theory in a function space setting for both linear and nonlinear TFM and to solve the inverse problem efficiently by regularization methods. We analyze a continuous model and only discretize for implementation, while physics approaches often discretize already
the initial model. In linear/nonlinear TFM a linear/nonlinear material as a substrate and the corresponding linear/nonlinear strain tensor are used leading to a linear/nonlinear inverse problem, respectively. Most materials are linear at small deformations, which can be ensured by using a material that is a bit stiffer than typical traction forces. 
However, stiff materials typically change cell behavior, and thus one often also uses soft materials and
large deformations, which might require nonlinear material laws.

We first solve the linear inverse problem of 2.5D TFM with a three-dimensional substrate using a similar method as Soin\'e et al.~\cite{soinePhd}. With our functional analytic setting, we can easily apply different standard algorithms from the theory of inverse problems.
We then solve the nonlinear inverse problem in a pure 2D setting for the substrate's surface and a hyperelastic material law. Hyperelasticity is a mathematical concept as introduced in Section \ref{sec:elasticity}. One method used in physics for solving nonlinear TFM is the direct method \cite{Franck2007,barrasa2021advanced,schwarzComparison}, i.e., directly computing traction forces from the measured data by numerically differentiating the noisy data. As in the linear 2.5D case, we propose a model formulation that can be solved in a straightforward manner using different regularization methods. From this approach, we directly derive a solution method for linear pure 2D TFM which is used for experimental data in Section~\ref{ssec:linearRealData}. We can use this approach either for computing the traction if the effective thickness is known or for estimating the effective thickness beyond which the horizontal displacements disappear. As described above, this approach differs from the standard approach in physics because the use of the Boussinesq solution makes the model not purely 2D, whereas we consider a pure 2D model.

In summary, this article provides a rigorous mathematical analysis for linear and nonlinear pure 2D TFM as well as 2.5D linear TFM along with all necessary tools to solve these inverse problems numerically in a stable way. It can be seen as a mathematical basis to allow for nonlinear substrate materials.

\emph{Outline.} In this article, we propose a TFM model leading to a parameter identification problem that can be solved using functional analytic tools and regularization methods from inverse problems theory. We propose a linear 2.5D model and a nonlinear pure 2D model which automatically leads to an analogous linear pure 2D model as well.
The article is structured as follows. The first part deals with the linear 2.5D model, see Section~\ref{ch:linTFM}. After presenting the basics of elasticity theory in Section~\ref{sec:elasticity}, the linear mathematical model is derived and analyzed in Section~\ref{sec:modelTFM}.
In the second part (Section~\ref{ch2:nonlinearTFM}), we first derive the nonlinear model in Section~\ref{sec2:modelNonlin}. Then the well-posedness of the forward operator is investigated, see Section~\ref{sec2:well-posedness}, and the Fr\'echet derivative and its adjoint are computed in Section~\ref{sec2:Frechet}. The third part, Section~\ref{sec:numerics}, contains various numerical experiments for both simulated and experimentally measured data.


\section{Linear 2.5D traction force microscopy}
\label{ch:linTFM}


If the substrate is modeled by a linear material law, the traction stress $t$ and the displacement $u$ are connected by the linear displacement-traction BVP of elasticity.
With a forward operator that maps a traction stress $t$ to the corresponding displacement $u$, we can consider TFM as a linear parameter identification problem.

\subsection{Basics of elasticity}
\label{sec:elasticity}
Let $n \in \{2,3\}$.
For a deformation $\varphi: \R^n \to \R^n$, we define the displacement vector 
$
    u(x) \coloneqq \varphi(x) - x.
$
Using this definition, we can express the deformation gradient $F \coloneqq \nabla \varphi(x) \in \R^{n\times n}$ as
\begin{equation} \label{eq:Fu}
    F(t,x) = \nabla u(x) + I \in \R^{n \times n},
\end{equation}
where $I$ is the identity matrix.
The deformation gradient is used to define a nonlinear strain tensor, the Green-Lagrange tensor
\begin{equation}
\label{eq1:nonlinearstrain}
    E \coloneqq \frac{1}{2} \left( F^{\top} F - I \right) \in \R^{n \times n}.
\end{equation}

The Green-Lagrange tensor $E$ is often represented as
\begin{equation}
\label{eq:Green-Lagrange}
    E = \frac{1}{2} \left( \nabla u + (\nabla u)^{\top} + (\nabla u)^{\top}(\nabla u) \right) \in \R^{n \times n}
\end{equation}
by means of \eqref{eq:Fu}. 
For small strains $\varepsilon$ resp.~deformations, one often uses the linearized Green-Lagrange tensor
\begin{equation}
    \varepsilon \coloneqq \frac{1}{2} \left( \nabla u + (\nabla u)^{\top} \right) \in \R^{n \times n},
\end{equation}
omitting the high-order term.
The relation between the strain tensor $\varepsilon$ or $E$ and the stress tensor $\sigma$ 
is called a constitutive equation and depends on the material.

If the substrate material is linear elastic and isotropic, we have a linear constitutive relation between $\varepsilon$ and $\sigma$, given by Hooke's law
\begin{align}
\label{eq:Hookeslaw}
\sigma(u) = \lambda \diver(u) I + 2 \mu \varepsilon \in \R^{n \times n}
\end{align}
in terms of the Lam\'e constants $\lambda,\mu \in \R $, related to the Young modulus $\tilde{E}$ and the Poisson ratio $\nu$ via 
\begin{align}
\label{eq:lame}
\mu = \frac{\tilde{E}}{2(1+\nu)} > 0, \quad \lambda = \frac{\tilde{E} \nu}{(1 + \nu)(1-2\nu)},
\end{align} 
see \cite[p. 128]{CiarletI}.
The first Lam\'e parameter $\lambda$ is positive for a positive Poisson ratio $\nu < \frac{1}{2}$. Here, we only consider this case.

A material is called hyperelastic if there exists a function $W(F)$, called the stored energy function, such that
\begin{align}
\label{eq:hyperelastic}
    \sigma (x,F) = \frac{\partial W}{\partial F} (x,F).
\end{align}
Hyperelastic constitutive equations model nonlinear material behavior as well as large shape changes. Examples of constitutive equations and their physical background can be found in~\cite[chapter 3.5]{bower2009} or, with a stronger emphasis on mathematics, in~\cite{botti2022}.
More details about elasticity can be found, e.g., in~\cite{CiarletI, holzapfel2000} or \cite[chapter 61/62]{Zeidler1997NonlinearFA4}.

\subsection{A mathematical model of linear 2.5D TFM}
\label{sec:modelTFM}

Consider a non-deformed region $\emptyset \neq \Omega \subset \R^3$ which is bounded, open and connected with a smooth boundary $\partial \Omega \in C^{0,1}$ and $\partial \Omega = \overline{\Gamma_D} \cup \overline{\Gamma_{T^*}}$ for disjoint, relatively open subsets $\Gamma_D, \Gamma_{T^*} \subset \partial \Omega$ and $\Gamma_D \neq \emptyset$. In the case of TFM the region of interest is part of the substrate which is assumed to be a cuboid, see Figures~\ref{fig:tfmsetting} and~\ref{fig:substrate}. The bottom surface of the cuboid is called $\Gamma_D$, the side surface $\Gamma_{T_0}$ and the top surface $\Gamma_T$ with $\Gamma_{T^*} = \Gamma_T \cup \Gamma_{T_0}$

\begin{figure}
    \centering
    \includegraphics[width=8cm]{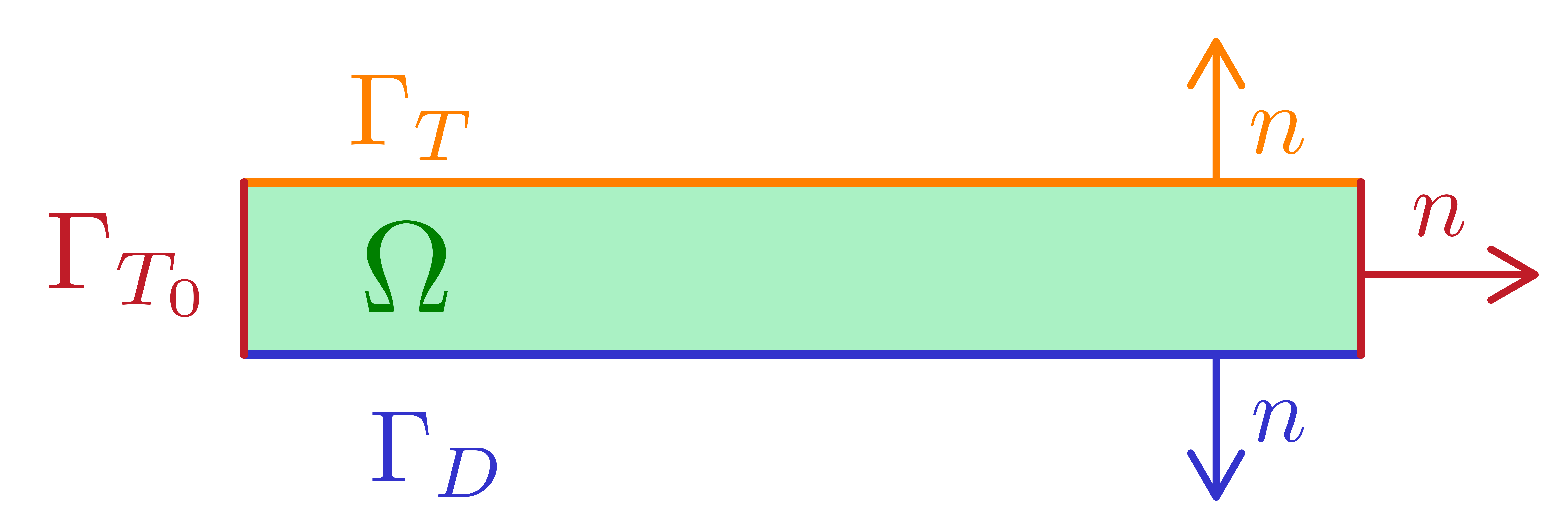}
    \caption{Substrate $\Omega$ with boundary $\partial \Omega = \Gamma_T \cup \Gamma_{T_0} \cup \Gamma_D$ and outer normal vectors $n$.}
    \label{fig:substrate}
\end{figure}

The balance of internal and body forces is expressed as 
$- \text{div } \sigma = f$ in $\Omega $
where the forces $f$ acting on the whole substrate, e.g., gravity, are neglected in the context of TFM. Therefore, we assume in the following $f=0$.

The traction stresses that the cell exerts onto the substrate can be modeled as surface forces acting on the boundary of the substrate by
$
    \sigma n = t  \text{ on } \Gamma_{T_*}
$
where $n$ is the outer normal vector, see Figure~\ref{fig:substrate}.
Since the cell applies forces only to the upper surface $\Gamma_T$, we have $t = 0$ on the side surface $\Gamma_{T_0}$. Therefore in the following, we will consider the traction stress $t$ only as a function on the upper surface $\Gamma_T$.
Furthermore, the displacement on the lower surface $\Gamma_D$ is zero because the substrate is fixed to the glass.
Altogether, given traction stresses $t \in L^2(\Gamma_T, \R^3)$ we obtain the mixed BVP of elasticity: 
\begin{align} 
\label{eq:mixedBVP}
    \begin{cases}
		- \text{div } \sigma(u) = 0 & \text{in } \Omega \\
  		\sigma(u) n = t & \text{on } \Gamma_T \\
        \sigma(u) n = 0 & \text{on } \Gamma_{T_0} \\    
		u = 0 & \text{on } \Gamma_D. 
    \end{cases}
\end{align}
Next we derive a weak formulation in the space 
\[
 H_{0,\Gamma_D}^1(\Omega, \R^3)\coloneqq \{ u \in H^1(\Omega, \R^3) : u|_{\Gamma_D} = 0 \}
\quad\mbox{with}\quad
\lVert v \rVert_{H^1_{0,\Gamma_D}(\Omega, \R^3)} \coloneqq      \lVert \nabla v \rVert_{L^2(\Omega, \R^3)}\,.
\]
It follows from Tartar's equivalence lemma~\cite{Tartar2007} that $\|\cdot\|_{H^1_{0,\Gamma_D}}$ is a Hilbert 
norm equivalent to the standard $H^1$-norm. 
The weak formulation of problem~\eqref{eq:mixedBVP} is given by
\begin{align}
    \label{eq:weakTFM}
            \text{find } &u \in H_{0,\Gamma_D}^1(\Omega, \R^3) \quad \mbox{s.t.}\quad 
            a(u,v) = l_t(v) \text{ for all } v \in H_{0,\Gamma_D}^1(\Omega, \R^3)
\end{align}
with the bilinear form $a$ and the linear form $l_t$ defined by
\begin{align*}
    a(u,v) \coloneqq \int_\Omega 2 \mu \varepsilon(u) : \varepsilon(v) + \lambda \diver(u) \diver(v) \dx, \quad
    l_t(v) \coloneqq \int_{\Gamma_T} t v \ds.
\end{align*}
Here $(M : M)\coloneqq \lvert M \rvert_{\mathrm{F}}^2 = \sum_{i,j=1}^n M_{ij}^2$ for matrices $M \in \R^{n\times n}$ is the Frobenius inner product with corresponding Frobenius norm $\lvert \cdot \rvert_{\mathrm{F}}$.

It follows from a standard application of the Lax-Milgram lemma and Korn's inequality that the parameter-to-state-map of linear TFM
\begin{align}
    \label{eq:tfmForward}
        A : L^2(\Gamma_T, \R^3) \to H^1_{0,\Gamma_D}(\Omega,\R^3), \quad
        t \mapsto u,
\end{align}
which is defined by \eqref{eq:weakTFM}, is  well-posed, linear, and bounded.

Then the inverse problem consists in recovering the traction stresses $t$ from a given displacement $u$.

\begin{remark}
\label{rem:forwardEmbeddingLin}
    The forward operator of linear TFM is given by the composition of the embedding to $L^2$ and parameter-to-state-map since we can only measure the $L^2$-error and norm of the measured displacement. All in all we get a forward operator
    \begin{align*}
        \hat{A} : L^2(\Gamma_T, \R^3) \to H^1_{0,\Gamma_D}(\Omega,\R^3) \hookrightarrow L^2(\Omega,\R^3), \quad
        t \mapsto u.
    \end{align*}
\end{remark}

We need to compute the adjoint operator to use regularization algorithms, e.g. in the normal equations, see Equation~\eqref{eq:normalEquations}.

\begin{lemma}
    The adjoint operator $\hat{A}^* : L^2(\Omega, \R^3) \to L^2(\Gamma_T, \R^3)$ is given by
    \begin{align*}
       \hat{A}^* w = \tr \phi
    \end{align*}
    where the function $\phi$ solves 
    \begin{align}
    \label{eq:adjointProblemWeak}
        a(\phi,v) = \langle w , v \rangle_{L^2(\Omega,\R^3)} \quad \forall v \in H^1_{0,\Gamma_D}(\Omega,\R^3)
    \end{align}
    and $\tr \phi$ denotes its trace on $\Gamma_T$.
\end{lemma}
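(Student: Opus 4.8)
The plan is to compute the adjoint directly from the definition $\langle \hat{A}t, w\rangle_{L^2(\Omega,\R^3)} = \langle t, \hat{A}^*w\rangle_{L^2(\Gamma_T,\R^3)}$, exploiting that both the forward problem and the auxiliary problem~\eqref{eq:adjointProblemWeak} are variational problems governed by the \emph{same symmetric} bilinear form $a$. Fix $w \in L^2(\Omega,\R^3)$ and let $\phi \in H^1_{0,\Gamma_D}(\Omega,\R^3)$ be the (unique, by Lax--Milgram) solution of~\eqref{eq:adjointProblemWeak}; fix $t \in L^2(\Gamma_T,\R^3)$ and let $u = \hat{A}t$ solve~\eqref{eq:weakTFM}, i.e.\ $a(u,v) = l_t(v) = \int_{\Gamma_T} t\,v\,\ds$ for all admissible $v$.

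First I would test the forward equation~\eqref{eq:weakTFM} with the choice $v = \phi$, yielding $a(u,\phi) = \int_{\Gamma_T} t\,(\tr\phi)\,\ds = \langle t, \tr\phi\rangle_{L^2(\Gamma_T,\R^3)}$. Next I would test the auxiliary equation~\eqref{eq:adjointProblemWeak} with $v = u$, which is legitimate since $u \in H^1_{0,\Gamma_D}(\Omega,\R^3)$, giving $a(\phi,u) = \langle w, u\rangle_{L^2(\Omega,\R^3)} = \langle \hat{A}t, w\rangle_{L^2(\Omega,\R^3)}$. By symmetry of $a$ (immediate from its definition via the Frobenius product $\varepsilon(u):\varepsilon(v)$ and the product $\diver(u)\diver(v)$) we have $a(u,\phi) = a(\phi,u)$, so chaining the two identities gives $\langle \hat{A}t, w\rangle_{L^2(\Omega,\R^3)} = \langle t, \tr\phi\rangle_{L^2(\Gamma_T,\R^3)}$. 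Since this holds for every $t \in L^2(\Gamma_T,\R^3)$, the definition of the Hilbert-space adjoint forces $\hat{A}^*w = \tr\phi$, and since it holds for every $w$ this identifies $\hat{A}^*$ completely.

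For the proof to be complete I would also record the well-definedness and boundedness of the map $w \mapsto \tr\phi$: existence and uniqueness of $\phi$ follow from Lax--Milgram and Korn's inequality exactly as for $A$ in~\eqref{eq:tfmForward} (the right-hand side $v \mapsto \langle w,v\rangle_{L^2(\Omega,\R^3)}$ is a bounded linear functional on $H^1_{0,\Gamma_D}(\Omega,\R^3)$ by Cauchy--Schwarz and the continuous embedding $H^1 \hookrightarrow L^2$), and the trace operator $H^1_{0,\Gamma_D}(\Omega,\R^3) \to L^2(\Gamma_T,\R^3)$ is bounded since $\partial\Omega \in C^{0,1}$; composing these bounded maps shows $\hat{A}^*$ maps into $L^2(\Gamma_T,\R^3)$ with the expected continuity, consistent with $\hat A$ being bounded.

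I do not anticipate a serious obstacle here: the argument is the standard "test each equation with the other equation's solution" duality computation, and the only points requiring a word of justification are (i) that $u$ is an admissible test function in~\eqref{eq:adjointProblemWeak}, which is clear since $u \in H^1_{0,\Gamma_D}(\Omega,\R^3)$, and (ii) that the boundary integral $\int_{\Gamma_T} t\,\phi\,\ds$ in $l_t(\phi)$ is literally $\langle t, \tr\phi\rangle_{L^2(\Gamma_T,\R^3)}$, i.e.\ that one must pass from the $H^1$-function $\phi$ to its trace before pairing against $t \in L^2(\Gamma_T,\R^3)$ — a notational subtlety already implicit in the definition of $l_t$. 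The mild care needed is simply to carry the trace operator explicitly through the computation rather than suppressing it.
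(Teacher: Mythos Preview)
Your proposal is correct and follows essentially the same approach as the paper: test the forward equation with $\phi$, test the adjoint equation with $u$, use the symmetry of $a$, and invoke Lax--Milgram for the well-posedness of~\eqref{eq:adjointProblemWeak}. The only difference is that you add a (welcome) remark on the boundedness of the trace map, which the paper leaves implicit.
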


\begin{proof}
Let a traction stress $t \in L^2(\Gamma_T, \R^3)$ be arbitrary and define $u \coloneqq \hat{A} t$. Then we obtain
    \begin{align*}
        \langle \hat{A} t , w \rangle_{L^2(\Omega,\R^3)} 
        &= \langle u, w \rangle_{L^2(\Omega,\R^3)} 
        = a(\phi,u)
        = a(u, \phi) \\
        &= \langle t, \tr \phi \rangle_{L^2(\Gamma_T, \R^3)} 
        = \langle t, \hat{A}^* w \rangle_{L^2(\Gamma_T, \R^3)}.
    \end{align*}
It remains to show that~\eqref{eq:adjointProblemWeak} has a unique solution, which can be done again by Lax-Milgram since the bilinear form $a$ is continuous and coercive and the linear form $l^*_w(v) \coloneqq  \langle w , v \rangle_{L^2(\Omega,\R^3)}$ is continuous.
\end{proof}


\section{Nonlinear pure 2D traction force microscopy}
\label{ch2:nonlinearTFM}

The linear model works well for small strains and linear materials. If we have larger strains we cannot assume anymore that the nonlinear part $(\nabla u)^{\top} \nabla u$ in~\eqref{eq:Green-Lagrange} is small and neglect it. Therefore, we use the Green-Lagrange strain tensor~\eqref{eq1:nonlinearstrain} instead of the linearized strain tensor. 
Furthermore, if a nonlinear material is used, we need to find a suitable constitutive equation as a material law that describes the stress $\sigma(u)$. 

These changes lead to a nonlinear PDE and thus to a nonlinear inverse problem. 

\subsection{Mathematical 2D model and inverse problem for a nonlinear material}
\label{sec2:modelNonlin}

In the nonlinear case, we consider a pure two-dimensional model, i.e., we only describe the surface of the substrate to which the cell adheres (see Figure \ref{fig:3Dto2D}). 
For a deformation $\varphi : \R^2 \to \R^2$, we define, as before in Section~\ref{sec:elasticity}, the displacement vector $u(x) = \varphi(x) - x$, the Green-Lagrange strain tensor $E = \frac{1}{2} \left( \nabla u + (\nabla u)^\top + (\nabla u)^\top \nabla u \right)$ and the stress tensor $\sigma (u)$ given by a suitable material law to be specified later.

If the undeformed region can be modeled by a bounded, open, and connected domain $\Omega \subset \R^2$ with a Lipschitz continuous boundary $\partial \Omega$, the problem of traction force microscopy can be described by the boundary value problem
\begin{align}
    \label{eq2:BVP}
    \left\lbrace\,
    \begin{array}{@{}l@{}l@{\quad}l@{}}%
            - \diver (\sigma(u)) &= T \coloneqq \frac{t}{h} &\text{in } \Omega \\
            u &= 0 &\text{on } \partial \Omega
    \end{array} \right.
\end{align}
for traction stresses $t$ with zero boundary conditions,
effective thickness $h$, and the force density $T$. The effective thickness $h$ is the thickness beyond which the horizontal displacements disappear~\cite{Michel2013}. The first equation in~\eqref{eq2:BVP} is called the thickness-averaged condition for stress equilibrium~\cite{Dembo1996, Michel2013, landau1986}.

In the linear case, we considered the mixed boundary value problem of elasticity with traction and displacement boundary conditions in 3D. If we change the model to the two-dimensional case, we neglect that the substrate is three-dimensional and only consider the upper surface of the substrate as our domain with the thickness-averaged condition for stress equilibrium~\cite{Dembo1996}. This model has been used in, e.g.,~\cite{Dembo1996, Michel2013}. However the standard model in physics assumes an infinitely large halfspace leading to a model that is not purely nonlinear, see e.g.~\cite{Dembo1999, Schwarz2015}.

By consequence, in the 2D case here, the old upper surface $\Gamma_T$ now corresponds to the entire new domain $\Omega$, see Figure~\ref{fig:3Dto2D}.

\begin{figure}[h]
    \centering
    \includegraphics[width=10cm]{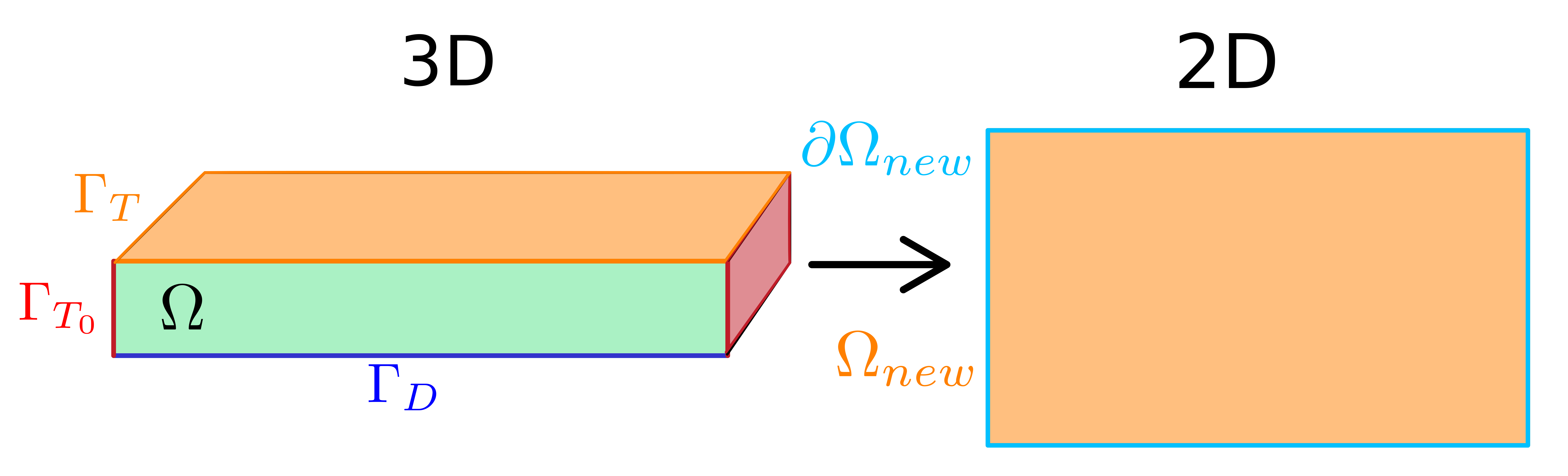}
    \caption{Change of TFM setting from 3D to 2D substrate.}
    \label{fig:3Dto2D}
\end{figure}

Due to this change in the mathematical setup, the traction stresses $t$ at the surface of the three-dimensional substrate in the 2.5D case as in~\eqref{eq:mixedBVP} become part of the volume force density in the 2D case as in~\eqref{eq2:BVP}, see \cite{Dembo1996}. This leads to a displacement boundary value problem in 2D which is easier to solve in the nonlinear case than the mixed boundary value problem. To get a well-defined problem we need to add boundary conditions. Therefore we assume that the substrate is larger than the support of the traction stresses, such that displacement at the boundary is negligible and we may assume zero Dirichlet boundary conditions. This assumption reflects the experimental situation well and holds, e.g., for sparsely seeded fibroblast cells that do not form connections to neighboring cells.

We can formulate the parameter-to-state-map of nonlinear TFM similarly as in the linear case as the map
\begin{align}
    \label{eq2:tfmForward}
        S : X \to Y, \quad
        T \mapsto u
\end{align}
where the displacement $u$ is the solution to the BVP~\eqref{eq2:BVP}. The Banach spaces $X$ and $Y$ are determined later in Remark~\ref{rem:funcSpace} using the existence and uniqueness results.  In contrast to the 2.5D case, where the operator $A$ in~\eqref{eq:tfmForward} maps a traction stress $t$ to a displacement $u$, in the 2D case the operator $S$ maps a force density $T$ to a displacement $u$. However, it is easy to convert traction stress $t$ to force density $T$ and vice versa by
\begin{align}
    \label{eq2:TractionStressToForceDensity}
    T = \frac{t}{h}.
\end{align}

\subsection{Well-posedness of the forward operator}
\label{sec2:well-posedness}
 
To get a well-defined forward operator $S$, the BVP~\eqref{eq2:BVP} needs to be solvable and the solution has to be locally unique. Unlike in the linear case in Section~\ref{ch:linTFM}, where a global unique existence result can be found easily in the literature, the problem of unique existence is more complicated in the nonlinear case.

In the following we will first show the existence of a solution via the minimization of energy in~\ref{s2:existence}, then state a local uniqueness result via the Implicit Function Theorem in~\ref{s2:uniqueness} under additional regularity conditions and finally argue for a suitable choice of the Banach spaces $X$ and $Y$ in~\eqref{eq2:tfmForward}.

\subsubsection{Existence of a solution via minimization of the energy}
\label{s2:existence}

In 1976 John Ball proved the existence of a solution for general boundary value problems of elasticity in 1D, 2D, and 3D for nonlinear elasticity \cite{Ball1976ConvexityCA}. To use his results directly many conditions on the used functions must be fulfilled. He also gives a proof for the mixed BVP in 3D. Similar proofs can be found, e.g., in \cite[chapter 7]{CiarletI} or \cite[section 62.13]{Zeidler1997NonlinearFA4}. However we are interested in the displacement BVP in 2D. Having only Dirichlet boundary conditions is a subcase of the mixed BVP, but to get the result in 2D suitable adaptions to the proof are necessary. The main difference lies in the definition of polyconvexity and coercivity of the stored energy function, see equation~\eqref{eq:hyperelastic}, depending on the dimension $n$ of the deformation gradient $F \in \R^{n \times n}$. 

\begin{definition}[Polyconvexity]
\label{def:polyconvex}
    A stored energy function $W$ is called polyconvex if there exists a convex function $P$ such that
    \begin{alignat*}{2}
            W(F) &= P(F, \det F) &&\text{ if } n=2, \\
            W(F) &= P(F, \adj F, \det F) &&\text{ if } n=3.
    \end{alignat*}
    For $n=3$ we need the adjugate matrix defined by $\adj F = \det F \cdot F^{-1}$.
\end{definition}

\begin{definition}[Coercivity]
\label{def:coercivity}
    A polyconvex stored energy function $W$ in $\mathbb{R}^{2\times 2}$ is called coercive if
    \begin{align*}
        W(F) =  P(F,\det F) \geq C(\lvert F \rvert_{\mathrm{F}}^p + \lvert \det F \rvert^s) + D  
    \end{align*}
    holds for the convex function $P$ from Definition~\ref{def:polyconvex} for $p \geq 2$, $s > 1$, $r \geq \frac{p}{p-1}$, $C >0$ and $D \in \R$.
\end{definition}

In the 3D case the deformation gradient $F$ and its determinant and adjugate, $\det F$ and $\adj F$, describe the deformation of line, surface, and volume elements. Furthermore, in the 2D case, it holds $\lvert \adj F \rvert_{\mathrm{F}} = \lvert F \rvert_{\mathrm{F}}$. Therefore in the 2D case the adjugate, $\adj F$, is not needed. 

The next theorem ensures the existence of a solution for general polyconvex and coercive stored energy functions.

\begin{theorem}{(Existence in 2D for the displacement BVP)}
\label{th2:existence Ball}

\noindent
Let $W$ be a polyconvex and coercive stored energy function such that
\begin{align}
    \label{eq:existence,limit}
    \lim_{\det F \to 0} P(F,\det F) = \infty
\end{align}
for the convex function $P : \M^2 \times (0, \infty) \to \R$ from Definition~\ref{def:polyconvex} with $p$ and $s$ as in Definition~\ref{def:coercivity} and the space of matrices $\M^2 \coloneqq \{ M \in \R^{2 \times 2} \}$.

Let the force density $T \in L^p(\Omega,\R^2)$ and a finite reference deformation $u_0 \in V$ be given with
    \begin{align}
    \label{eq2:V}
        V \coloneqq \{ u \in W^{1,p}(\Omega,\R^2)\,| \, \det (I + \nabla u) \in L^s(\Omega,\R^2), \det (I + \nabla u) > 0 \text{ a.e. in } \Omega\}.
    \end{align}
Then for 
\begin{align}
\label{eq2:energy}
    G(u) = \int_{\Omega} W(F) \dx -\int_{\Omega} T u \dx= \int_{\Omega} W(I + \nabla u(x)) \dx - \int_{\Omega} T u \dx
\end{align}
the problem 
\begin{align}
    \label{eq2:MinProb}
    &G(u) = \min!, \quad u \in U \coloneqq \bigl\{ u \in V \bigr| \hspace{1ex} u|_{\partial \Omega} = u_0|_{\partial \Omega} \bigr\}
\end{align}
has at least one solution if $G(u_0) < \infty$.

\end{theorem}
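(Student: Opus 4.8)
The plan is to follow John Ball's direct method of the calculus of variations, adapting the 3D argument found, e.g., in \cite[chapter 7]{CiarletI} or \cite[section 62.13]{Zeidler1997NonlinearFA4} to the 2D displacement problem. First I would verify that the admissible set $U$ is nonempty: this is exactly the hypothesis $G(u_0) < \infty$, which guarantees $u_0 \in U$. Then I would show that the infimum $m \coloneqq \inf_{u \in U} G(u)$ is finite, i.e. $m > -\infty$. This uses the coercivity estimate from Definition~\ref{def:coercivity} together with the bound on the linear term: since $T \in L^p(\Omega,\R^2)$ and, by the Sobolev/Poincar\'e inequality on $W^{1,p}_0$-type spaces (note $u - u_0$ vanishes on $\partial\Omega$), $\lVert u \rVert_{L^{p'}}$ is controlled by $\lVert \nabla u \rVert_{L^p}$, H\"older's inequality yields $\bigl|\int_\Omega T u \dx\bigr| \le \lVert T \rVert_{L^p} \lVert u \rVert_{L^{p'}} \lesssim \lVert \nabla u \rVert_{L^p} + \mathrm{const}$. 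Combining with $\int_\Omega W(F)\dx \ge C(\lVert \nabla u \rVert_{L^p}^p + \lVert \det(I+\nabla u)\rVert_{L^s}^s) + D|\Omega|$ and the fact that $p > 1$ so the $p$-th power dominates the linear term, $G$ is bounded below and moreover coercive on $U$ in the sense that $G(u) \to \infty$ as $\lVert \nabla u \rVert_{L^p} + \lVert \det(I+\nabla u) \rVert_{L^s} \to \infty$.

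Next I would take a minimizing sequence $(u_k) \subset U$ with $G(u_k) \to m$. By the coercivity just established, $(\nabla u_k)$ is bounded in $L^p(\Omega, \R^{2\times2})$ and $(\det(I+\nabla u_k))$ is bounded in $L^s(\Omega)$; together with the Poincar\'e-type bound this makes $(u_k)$ bounded in $W^{1,p}(\Omega,\R^2)$. Since $p \ge 2 > 1$, reflexivity of $W^{1,p}$ gives a subsequence (not relabeled) with $u_k \rightharpoonup u$ weakly in $W^{1,p}$ and $\det(I + \nabla u_k) \rightharpoonup \delta$ weakly in $L^s$. The crucial step is the weak-continuity of the Jacobian: because $s > 1$ and $r \ge p/(p-1) = p'$, the classical result on weak continuity of null Lagrangians (minors of the gradient) applies, giving $\delta = \det(I + \nabla u)$ a.e.; hence $u \in V$ after also checking the sign constraint. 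For the latter, I would use that $\det(I+\nabla u_k) > 0$ a.e. together with condition~\eqref{eq:existence,limit}: if $\det(I+\nabla u)$ vanished on a set of positive measure, polyconvexity would force $\int_\Omega P(I+\nabla u, \det(I+\nabla u))\dx = \infty$, contradicting lower semicontinuity and finiteness of $m$. The trace condition $u|_{\partial\Omega} = u_0|_{\partial\Omega}$ is preserved under weak $W^{1,p}$ convergence by continuity of the trace operator, so $u \in U$.

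Then I would establish weak lower semicontinuity of $G$. Write $G(u) = \int_\Omega P(I + \nabla u, \det(I+\nabla u))\dx - \int_\Omega T u \dx$. The linear term is weakly continuous (it is a bounded linear functional composed with compact embedding $W^{1,p} \hookrightarrow L^{p'}$, or simply weakly continuous as $T \in L^p = (L^{p'})^*$ and $u_k \rightharpoonup u$ in $L^{p'}$). For the stored-energy term, convexity of $P$ in the pair of variables $(F, \det F)$, combined with the weak convergence $(\nabla u_k, \det(I+\nabla u_k)) \rightharpoonup (\nabla u, \det(I+\nabla u))$ in $L^p \times L^s$, yields by the standard convexity-implies-weak-lower-semicontinuity theorem (Tonelli/De Giorgi) that $\int_\Omega P(I+\nabla u, \det(I+\nabla u))\dx \le \liminf_k \int_\Omega P(I+\nabla u_k, \det(I+\nabla u_k))\dx$. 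Therefore $G(u) \le \liminf_k G(u_k) = m$, and since $u \in U$ we get $G(u) = m$, so $u$ is a minimizer.

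The main obstacle is the weak continuity of the Jacobian determinant $F \mapsto \det F$ along the minimizing sequence, i.e. identifying the weak $L^s$-limit $\delta$ of $\det(I+\nabla u_k)$ with $\det(I+\nabla u)$. This is where the precise exponent restrictions ($p \ge 2$, $s > 1$, $r \ge p/(p-1)$) enter and cannot be circumvented by soft arguments; in 2D one typically proves it by writing $\det(I+\nabla u) = \partial_1\bigl((x_1 + u_1)\,\partial_2(x_2+u_2)\bigr) - \partial_2\bigl((x_1+u_1)\,\partial_1(x_2+u_2)\bigr)$ as a divergence of terms that are products of a $W^{1,p}$ function and an $L^p$ function, which lie in $L^r$ for suitable $r$, and then passing to the limit in the distributional identity against test functions — using the compact embedding $W^{1,p} \hookrightarrow\hookrightarrow L^q$ for the first factor. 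A secondary delicate point is ruling out degeneration $\det(I+\nabla u) = 0$ on a positive-measure set, handled via the blow-up hypothesis~\eqref{eq:existence,limit} as indicated above. Everything else — nonemptiness of $U$, coercivity, boundedness of the minimizing sequence, weak lower semicontinuity of the convex part, preservation of the boundary trace — is routine once the Jacobian issue is settled.
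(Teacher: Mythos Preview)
Your proposal is correct and follows the same overall direct-method strategy as the paper: coercivity gives a bounded minimizing sequence, reflexivity gives weak limits, weak continuity of the Jacobian identifies the limit of $\det(I+\nabla u_k)$, and convexity of $P$ yields lower semicontinuity.

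Two minor technical differences are worth noting. For the Jacobian step, the paper simply invokes Ball's Theorem~6.2 from \cite{Ball1976ConvexityCA}, whereas you sketch the underlying div-curl/distributional argument; these are the same result. For lower semicontinuity, you appeal to the Tonelli/De~Giorgi theorem directly, while the paper works more by hand: it applies Mazur's theorem to pass from weak to strong (and then a.e.) convergence of convex combinations $w_n$ of the $Hu_k$, uses convexity of $P$ pointwise on those combinations, and then Fatou's lemma. Your route is slightly slicker if one is willing to quote the abstract l.s.c.\ theorem; the paper's Mazur--Fatou argument is more self-contained and makes the role of convexity explicit. Both handle the positivity constraint $\det(I+\nabla u)>0$ the same way, via the blow-up hypothesis~\eqref{eq:existence,limit}.
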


\begin{proof}
    We use the direct method to prove the theorem and follow the proof of the 3D case from \cite[chapter 62.13]{Zeidler1997NonlinearFA4} and \cite[Theorem 7.7-1]{CiarletI} with suitable adaptions for 2D, mainly regarding coercivity and polyconvexity.
    
    The main idea is to define a minimizing sequence of the energy $G$ in~\eqref{eq2:energy}. This sequence is then bounded (step ii) by the coercivity of the stored energy function and thus it converges to an element $u^*$ (steps iii). Due to the polyconvexity, the energy is sequentially weakly lower semi-continuous which yields $G(u^*) \leq \inf_{u \in U} G(u)$ (step iv). The element $u^*$ also belongs to the set $U$ due to~\eqref{eq:existence,limit} (step v). The detailed proof can be found in Appendix~\ref{Proof_Existence}.
\end{proof}

\subsubsection{Local uniqueness of the solution via the Implicit Function Theorem}
\label{s2:uniqueness}
    The proof of uniqueness via the Implicit Function Theorem is based on the Banach algebra property of the spaces $W^{m,p}(\Omega, \R^2)$, i.e., the fact that the product of two functions of the space lies again in the space with a corresponding norm bound, which only holds true for $mp > n$, where $n=2$ is the space dimension, see, e.g., Theorem 6.1-4 from \cite{CiarletI}.
    
    In the proof a known admissible solution in the space $W^{m,p}(\Omega,\R^2)$ is locally uniquely continued. We call a deformation state $u$ admissible if the linearization of the original equation in $u$ yields a linear strongly elliptic system with a unique solution, see \cite[chapter 61.12]{Zeidler1997NonlinearFA4}. Then in a small neighborhood of an admissible solution and the associated traction stress the boundary value problem of nonlinear elasticity~\eqref{eq2:BVP} has a unique solution. 

     A key ingredient to prove this statement is the Sobolev embedding $W^{1,p}(\Omega,\R^2) \hookrightarrow C^0(\Omega \cup \partial \Omega,\R^2)$ for $p > 2$, see e.g.~\cite[Theorem 6.1-3]{CiarletI}. Thus, in analogy to the $3D$-case in~\cite[Theorem 6.7-1]{CiarletI},~\cite[Theorem 61.F]{Zeidler1997NonlinearFA4} or~\cite[Chapter IV]{Valent1988BoundaryVP}, we obtain the following result.
     \begin{theorem}{(Existence and uniqueness)}
         \label{thm:existence_uniqueness}
        Let $\Omega \subset \R^2$ be a domain with boundary $\partial \Omega$ of class $C^2$ and let the stored energy function be $C^\infty$. If we know an admissible solution $\bar{u} \in W^{2,p}(\Omega) \cap W^{1,p}_0(\Omega)$ for $p>2$ with respective force density $\bar{T}$, then there exist neighborhoods
        \begin{align*}
            V(\bar{u}) \in W^{2,p}(\Omega) \cap W^{1,p}_0(\Omega), \qquad W(\bar{T}) \in L^p(\Omega)
        \end{align*}
        such that for each $T \in W$, the BVP $-\diver(\sigma(u)) = T$ has exactly one solution $u \in V$. 
        Moreover, the linearized operator $W^{2,p}(\Omega)\to L^p(\Omega)$, $h\mapsto -\diver(\sigma'(u)h)$ is bounded and 
        boundedly invertible. 
     \end{theorem}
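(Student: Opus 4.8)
The plan is to deduce this result from the Implicit Function Theorem in Banach spaces, exactly as in the three-dimensional case treated in \cite[Theorem 6.7-1]{CiarletI} or \cite[Theorem 61.F]{Zeidler1997NonlinearFA4}, with the dimension-dependent ingredients replaced by their two-dimensional analogues. First I would set up the nonlinear map $\mathcal{F}\colon \bigl(W^{2,p}(\Omega)\cap W^{1,p}_0(\Omega)\bigr)\times L^p(\Omega)\to L^p(\Omega)$, $\mathcal{F}(u,T) \coloneqq -\diver(\sigma(u)) - T$, where $\sigma(u) = \frac{\partial W}{\partial F}(I+\nabla u)$. One checks $\mathcal{F}(\bar u,\bar T)=0$ by hypothesis. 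The key point that makes this map well-defined and $C^1$ (indeed $C^\infty$) between these spaces is the Banach algebra property of $W^{1,p}(\Omega,\R^2)$ for $p>2$: since $2p>n=2$, products of $W^{1,p}$-functions stay in $W^{1,p}$ with a norm bound, and composition with the smooth constitutive function $\frac{\partial W}{\partial F}$ preserves $W^{1,p}$-regularity; hence $\diver\sigma(u)\in L^p$ depends smoothly on $u\in W^{2,p}$. This also crucially uses the Sobolev embedding $W^{1,p}(\Omega,\R^2)\hookrightarrow C^0(\overline\Omega,\R^2)$ for $p>2$ so that $\det(I+\nabla u)$ stays bounded away from $0$ in a neighbourhood of $\bar u$ and $\sigma$ is well-defined there.

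Next I would compute the partial Fréchet derivative $D_u\mathcal{F}(\bar u,\bar T)\colon W^{2,p}(\Omega)\cap W^{1,p}_0(\Omega)\to L^p(\Omega)$, which is $h\mapsto -\diver(\sigma'(\bar u)h)$, i.e.\ the linearized elasticity operator at $\bar u$. The hypothesis that $\bar u$ is \emph{admissible} means precisely that this linearization is a linear strongly elliptic system; the associated homogeneous Dirichlet problem then has a unique solution, so $D_u\mathcal{F}(\bar u,\bar T)$ is injective. To conclude it is a Banach-space isomorphism onto $L^p(\Omega)$ one invokes the $L^p$-regularity theory for linear elliptic systems with $C^2$-smooth boundary (Agmon–Douglis–Nirenberg), which gives surjectivity together with the a priori estimate $\|h\|_{W^{2,p}}\le C\|\diver(\sigma'(\bar u)h)\|_{L^p}$ for $h$ with zero trace; this is exactly the statement that the linearized operator is boundedly invertible, which also yields the last sentence of the theorem. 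Here one needs $\partial\Omega\in C^2$ and $W\in C^\infty$ so that the coefficients of the linearized system are sufficiently regular for the elliptic estimate.

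With $\mathcal{F}$ of class $C^1$ near $(\bar u,\bar T)$, $\mathcal{F}(\bar u,\bar T)=0$, and $D_u\mathcal{F}(\bar u,\bar T)$ boundedly invertible, the Implicit Function Theorem produces neighbourhoods $W(\bar T)\subset L^p(\Omega)$ and $V(\bar u)\subset W^{2,p}(\Omega)\cap W^{1,p}_0(\Omega)$ and a $C^1$ map $T\mapsto u(T)$ with $\mathcal{F}(u(T),T)=0$ and $u(T)$ the unique zero in $V(\bar u)$; this is the asserted local existence and uniqueness for $-\diver(\sigma(u))=T$. Shrinking $W(\bar T)$ and $V(\bar u)$ if necessary (using continuity of $u\mapsto D_u\mathcal{F}(u,T)$ in the operator norm together with the openness of the set of invertible operators), the linearized operator $h\mapsto -\diver(\sigma'(u)h)$ remains bounded and boundedly invertible for every $u\in V(\bar u)$, giving the final claim. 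The main obstacle is the verification that $D_u\mathcal{F}(\bar u,\bar T)$ is an isomorphism: this is where the two-dimensional nature enters nontrivially (one must check strong ellipticity of the linearized system in $\R^{2\times2}$ and then quote the ADN $L^p$-theory on a $C^2$ domain), whereas the smoothness of $\mathcal{F}$ and the final application of the Implicit Function Theorem are routine once the Banach algebra and Sobolev embedding facts for $p>2$, $n=2$ are in place.
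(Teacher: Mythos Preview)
Your proposal is correct and follows precisely the approach the paper indicates: the paper does not give a detailed proof of this theorem but merely points to the Banach algebra property of $W^{m,p}$ for $mp>n$, the Sobolev embedding $W^{1,p}\hookrightarrow C^0$ for $p>2$, and then refers to the 3D analogues in \cite[Theorem 6.7-1]{CiarletI}, \cite[Theorem 61.F]{Zeidler1997NonlinearFA4} and \cite[Chapter IV]{Valent1988BoundaryVP}, which is exactly the Implicit Function Theorem argument you have spelled out. One small slip: where you write ``since $2p>n=2$'' for the Banach algebra property, the correct condition is $mp>n$ with $m=1$ and $n=2$, i.e.\ $p>2$, which is indeed what you assume.
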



    It can be shown that no deformation state $u \in W^{1,p}(\Omega,\R^2)$ can be admissible \cite[chapter 4]{Valent1988BoundaryVP}. Since the existence result from Theorem~\ref{th2:existence Ball} just gives a solution in the space $W^{1,p}(\Omega,\R^2)$, the regularity of this solution has to be improved to be continued uniquely as it is required in the uniqueness proof. This might be done in a similar manner as described in~\cite{Morrey1966}.



\subsection{Finding a suitable material law}
\label{sec2:matLaw}

Next, we need to find a suitable material law describing the stress $\sigma(u)$. As we saw in the previous Section~\ref{sec2:well-posedness}, it is important for the existence of a solution that the stored energy function $W(F)$ of the material law be polyconvex. 
Unfortunately the Neo-Hookean law typically used in physics~\cite{Schwarz2015,ToyjanovaNonlinearNeo-Hooke,Sanz-HerreraNonlinearNeo-Hooke} for nonlinear TFM is not polyconvex~\cite{SuchokiNeoHookeanNotPolyconvex} and thus we cannot guarantee that a solution to the respective BVP exists. For this reason, we pick a different material law from the family of polyconvex Ogden materials.

To ensure local existence and uniqueness, the stored energy function of a homogeneous, isotropic, hyper\-elastic~\eqref{eq:hyperelastic} material should agree with the expansion
\begin{align}
\label{eq2:expansionNaturalState}
    W(F) = \frac{\lambda}{2} (\tr E)^2 + \mu \tr (E^2) + o(\lVert E \rVert^2), \quad E = \frac{1}{2} (F^{\top} F - I),
\end{align}
with the Lam\'e parameters $\lambda$ and $\mu$, see \eqref{eq:lame}, if it is rewritten in terms of the Green-Lagrange tensor $E$ near a natural state, i.e., an unstressed state in which all body forces vanish. 
When deriving rules for the equations of elasticity, it can be shown that a constitutive equation of an isotropic, homogeneous material whose reference configuration is a natural state has to fulfill the expansion $\sigma(u) = F \left(\lambda (\tr E(u)) I + 2 \mu E(u) + o(E) \right)$ near a natural state with the Green-Lagrange tensor $E(u)$ as in~\eqref{eq:Green-Lagrange}. By using the relation~\eqref{eq:hyperelastic} between the constitutive equation and the stored energy function and postulating further differentiability assumptions, the expansion~\eqref{eq2:expansionNaturalState} follows.
Further details and a physical intuition can be found in Chapter 4.5 of~\cite{CiarletI}. 

Adapting Theorem 4.10-2 from~\cite{CiarletI} to the two-dimensional case we get the following result.

\begin{theorem}[Constitutive Equation]
\label{th2:constEq}
    Let $\lambda, \mu > 0$ be the given Lam\'e constants. Then the stored energy function
    \begin{align}
        \label{eq2:storedEnergy}
        W(F) \coloneqq \frac{\mu}{2} \lvert F \rvert_{\mathrm{F}}^2 + \frac{\lambda}{4} (\det F)^2 - \Big(\mu + \frac{\lambda}{2}\Big) \ln (\det F) - \frac{3\mu}{2} - \frac{\lambda}{4}
    \end{align}
    is polyconvex and satisfies the expansion
    \begin{align}
    \label{eq2:expasion2}
        W(F) = \frac{\lambda}{2} (\tr E)^2 + \mu \tr E^2 + \mathcal{O}(\lvert E \rvert_{\mathrm{F}}^3).
    \end{align}
    Furthermore, it satisfies the coercivity inequality
    \begin{align}
    \label{eq2:Wcoercive}
        W(F) \geq C (\lvert F \rvert_{\mathrm{F}}^2 + (\det F)^2) + D
    \end{align}
    for constants $C > 0$, $D \in \R$, if the Lam\'e constants fulfill the condition 
    \begin{align}
    \label{eq2:constEqLameCondition}
        \lambda > \frac{2 \mu}{\euler - 1}.
    \end{align}
    with Euler's number $\euler$.
\end{theorem}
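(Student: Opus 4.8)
The plan is to verify the three claims of the theorem---polyconvexity, the second-order expansion near the natural state, and the coercivity inequality---separately, since they are essentially independent computations on the explicit stored energy function $W$ in \eqref{eq2:storedEnergy}.

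First, for \emph{polyconvexity}: I would exhibit a convex function $P:\M^2\times(0,\infty)\to\R$ with $W(F)=P(F,\det F)$. The natural candidate is $P(F,\delta)\coloneqq \frac{\mu}{2}\lvert F\rvert_{\mathrm F}^2 + \frac{\lambda}{4}\delta^2 - (\mu+\frac\lambda2)\ln\delta - \frac{3\mu}{2}-\frac\lambda4$. The map $F\mapsto \frac{\mu}{2}\lvert F\rvert_{\mathrm F}^2$ is convex (a positive multiple of a squared norm), $\delta\mapsto\frac\lambda4\delta^2$ is convex on $(0,\infty)$ since $\lambda>0$, and $\delta\mapsto -(\mu+\frac\lambda2)\ln\delta$ is convex since $\mu+\frac\lambda2>0$; a sum of convex functions in the separate arguments $(F,\delta)$ is jointly convex, and the constant is harmless. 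Hence $P$ is convex and $W$ is polyconvex by Definition~\ref{def:polyconvex}. This step is routine.

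Second, for the \emph{expansion} \eqref{eq2:expasion2}: I would write $F^\top F = I + 2E$, so $\lvert F\rvert_{\mathrm F}^2 = \tr(F^\top F) = 2 + 2\tr E$ and $(\det F)^2 = \det(F^\top F) = \det(I+2E)$. In two dimensions $\det(I+2E) = 1 + 2\tr E + 4\det E$, and $\det E = \frac12\big((\tr E)^2 - \tr(E^2)\big)$, so $(\det F)^2 = 1 + 2\tr E + 2(\tr E)^2 - 2\tr(E^2)$. Also $\ln(\det F) = \frac12\ln\det(I+2E) = \frac12\ln\!\big(1+2\tr E+4\det E\big)$, which I expand via $\ln(1+x)=x-\frac{x^2}{2}+O(x^3)$ with $x = 2\tr E + 4\det E = O(\lvert E\rvert_{\mathrm F})$, giving $\ln(\det F) = \tr E - \tr(E^2) + \tr(E^2) \cdot 0 \cdots$; carefully, $\frac12\big(x - \frac{x^2}{2}\big) = \tr E + 2\det E - \tr E^2 + O(\lvert E\rvert^3) = \tr E + (\tr E)^2 - \tr(E^2) - \tr(E^2)+\cdots$ — I will track these terms precisely. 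Substituting everything into $W(F)$, the zeroth-order constants cancel against $-\frac{3\mu}{2}-\frac\lambda4$, the first-order terms in $\tr E$ cancel (this is exactly the natural-state condition), and collecting the second-order terms yields $\frac\lambda2(\tr E)^2 + \mu\tr(E^2)$ with remainder $\mathcal O(\lvert E\rvert_{\mathrm F}^3)$. The bookkeeping of quadratic terms is the one place where an arithmetic slip is easy, so I would do it symbolically and double-check against \eqref{eq2:expansionNaturalState}.

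Third, for \emph{coercivity} \eqref{eq2:Wcoercive}: the terms $\frac\mu2\lvert F\rvert_{\mathrm F}^2$ and $\frac\lambda4(\det F)^2$ already supply the desired $C(\lvert F\rvert_{\mathrm F}^2 + (\det F)^2)$ growth, so it suffices to absorb the logarithmic term, i.e.\ to bound $-(\mu+\frac\lambda2)\ln\delta$ from below by $-\frac\lambda4\delta^2 - \tfrac{\mu}{2}\cdot(\text{something controlled}) $ plus a constant, uniformly in $\delta = \det F>0$. Concretely I would show $(\mu+\frac\lambda2)\ln\delta \le \frac\lambda4\delta^2 + E_0$ for a constant $E_0$ depending on $\lambda,\mu$, which reduces to maximizing $g(\delta) = (\mu+\frac\lambda2)\ln\delta - \frac\lambda4\delta^2$ over $\delta>0$; $g$ attains its max at $\delta^2 = \frac{2(\mu+\lambda/2)}{\lambda} = \frac{2\mu}{\lambda}+1$, with maximum value $\frac{\mu+\lambda/2}{2}\big(\ln(\tfrac{2\mu}{\lambda}+1) - 1\big)$. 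Then $W(F) \ge \frac\mu2\lvert F\rvert_{\mathrm F}^2 + \frac\lambda4(\det F)^2 - g(\delta)_{\max} - \frac{3\mu}{2}-\frac\lambda4$, which is of the required form with, say, $C = \min(\frac\mu2,\frac\lambda4)>0$. I expect the condition \eqref{eq2:constEqLameCondition}, $\lambda > \frac{2\mu}{\euler-1}$, to enter precisely when one wants a \emph{cleaner} splitting — e.g.\ if one instead absorbs the log against only \emph{part} of the $\delta^2$ term, or if the statement implicitly wants $D$ to have a particular sign, or wants the inequality with the natural choice $\delta^2 = \tfrac{2\mu}{\lambda}+1$ producing a log argument; note $\ln(\tfrac{2\mu}{\lambda}+1) < 1 \iff \tfrac{2\mu}{\lambda}+1 < \euler \iff \lambda > \tfrac{2\mu}{\euler-1}$, which makes $g_{\max}<0$ and hence lets one keep the full $\frac\lambda4(\det F)^2$ while still getting a genuine lower bound with the log term having the right sign. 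So I would present the coercivity argument by first establishing the pointwise bound on the logarithm and then invoking \eqref{eq2:constEqLameCondition} exactly at the point where the sign of the resulting constant matters.

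The main obstacle I anticipate is not any single step but the precise role of condition \eqref{eq2:constEqLameCondition}: polyconvexity and the Taylor expansion hold with no constraint on $\lambda,\mu$ beyond positivity, so the entire content of the hypothesis \eqref{eq2:constEqLameCondition} must be squeezed out of the coercivity estimate, and I would need to make sure the estimate is organized so that this is exactly where $\euler-1$ appears (through $\ln(1+\tfrac{2\mu}{\lambda})<1$), rather than being an artifact of a suboptimal bound. Everything else is careful but routine calculus on an explicit scalar function of $\det F$ together with elementary convexity.
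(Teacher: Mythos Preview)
Your plan matches the paper's proof in all three parts: polyconvexity via an explicit convex $P(F,\delta)$, the Taylor expansion in $E$ (the paper derives the coefficients $a,b,c,d$ by solving a linear system against the target expansion rather than verifying directly, but the computation is the same), and coercivity by dominating $\ln(\det F)$ with a quadratic in $\det F$.

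One concrete slip in your coercivity sketch: the displayed bound $W(F)\ge \tfrac{\mu}{2}\lvert F\rvert_{\mathrm F}^2 + \tfrac{\lambda}{4}(\det F)^2 - g_{\max} - \tfrac{3\mu}{2}-\tfrac{\lambda}{4}$ double-counts the $(\det F)^2$ term --- you already spent the full $\tfrac{\lambda}{4}\delta^2$ inside $g$ to control the logarithm, so it cannot reappear on the right. You immediately sense this and talk about ``absorbing the log against only part of the $\delta^2$ term,'' which is exactly right. The paper does this cleanly by seeking $B>0$ with $\tfrac{\lambda}{4}\delta^2 - (\mu+\tfrac{\lambda}{2})\ln\delta \ge B\delta^2$; using the elementary inequality $\delta^2\ge 2\euler\ln\delta$ this reduces to $0<B\le \tfrac{\lambda}{4}-\tfrac{\mu+\lambda/2}{2\euler}$, and positivity of the right-hand side is precisely condition~\eqref{eq2:constEqLameCondition}. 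Your observation that $g_{\max}<0$ under \eqref{eq2:constEqLameCondition} is correct but by itself only gives $\tfrac{\lambda}{4}\delta^2 - (\mu+\tfrac{\lambda}{2})\ln\delta > 0$, not $\ge B\delta^2$; you need the strict inequality on the Lam\'e constants to leave room for a positive $B$, which is what the paper's formulation makes explicit.
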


\begin{proof}
    We write the stored energy function~\eqref{eq2:storedEnergy} in the general form
    \begin{align}
    \label{eq2:generalStoredEnergy}
        W(F) = a \lvert F \rvert_{\mathrm{F}}^2 + \Gamma(\det F) + b
    \end{align}
    with a function $\Gamma$ of the form $\Gamma(\delta) = c \delta^2 - d \ln \delta$ and constants $a, c, d > 0$,  $b \in \R$. Then we determine the constants $a,b,c,d$ such that the stored energy function $W$ fulfills the conditions~\eqref{eq2:expasion2} and~\eqref{eq2:Wcoercive} as well as polyconvexity:

 \textbf{(i)}   We prove the following equations for two-dimensional matrices by simple calculations using the big $\mathcal{O}$-notation:
    \begin{align*}
        I + 2 E &= F^{\top} F \\
        \det F^{\top} F &=  \det ( I + 2 E) = 1 + 2 (\tr E)^2 - 2 \tr(E^2) + 2 \tr E + \mathcal{O}(\lvert E \rvert_{\mathrm{F}}^3) \\
        \Gamma(\det F) &= \Gamma( (\det F^{\top} F)^{\frac{1}{2}}) 
        \stackrel{(*)}{=} \Gamma \bigl(1 + \tr E + \frac{1}{2} (\tr E)^2 - \tr E^2 + \mathcal{O}(\lvert E \rvert_{\mathrm{F}}^3) \bigr) \\
        &\stackrel{(**)}{=} \Gamma(1) + \Gamma'(1) [\tr E + \frac{1}{2} (\tr E)^2 - \tr E^2] + \frac{1}{2} \Gamma''(1) (\tr E)^2 + \mathcal{O}(\lvert E \rvert_{\mathrm{F}}^3).
    \end{align*}
    Equality (*) holds due to the equality 
    \begin{align*}
    	\left( 1 + \tr E + \frac{1}{2} (\tr E)^2 - \tr E^2 + \mathcal{O}(\lvert E \rvert_{\mathrm{F}}^3)\right)^2 
    	= (\det F^\top F)
    \end{align*} and we get equality (**) by applying the Taylor expansion to the function $\Gamma$.
    Now we simply compare the expansion~\eqref{eq2:expasion2} with the general form of the stored energy function~\eqref{eq2:generalStoredEnergy}. By using the derived equations for $\lvert F \rvert_{\mathrm{F}}^2$ and $\Gamma(\det F)$ we arrive at the system of equations
    \begin{align*}
        3 a + \Gamma(1) + b &= 0, \\
        2 a + \Gamma'(1) &= 0, \\
        \Gamma'(1) + \Gamma''(1) &= \lambda, \\
        - \Gamma'(1) &= \mu.
    \end{align*}
    The expressions $\Gamma'(1)$ and $\Gamma''(1)$ are uniquely determined by these equations. Finally combining these equations with $\Gamma(1) = c$, $\Gamma'(1) = 2c -d$, and $\Gamma''(1) = 2c + d$ yields 
   \begin{align*}
   a = \frac{\mu}{2}, \quad
   c = \frac{\lambda}{4}, \quad
   d = \mu + \frac{\lambda}{2} , \quad
   b = - \frac{3 \mu}{2} - \frac{\lambda}{4}
   \end{align*}
and we arrive at~\eqref{eq2:generalStoredEnergy}.

 \textbf{(ii)} From the definition of the deformation gradient $F = \nabla \varphi + I$ with the deformation $\varphi$, we get the condition $\det F > 0$ due to mass conservation and \eqref{eq2:generalStoredEnergy} is well-defined. 

 For coercivity we need an estimate of the form 
$cx^2 - d \ln(x) \geq B x^2$, or equivalently 
 \begin{align}
 \label{eq:proofConstEq(1)}
     x^2 \geq \frac{d}{c-B} \ln(x) 
 \end{align}
 for a positive constant $B > 0$ and $x = \det F > 0$.
 According to $x^2 \geq 2\euler \ln(x)$ with Euler's number $\euler$, 
 the constant $B$ has to fulfill 
$
     0 < B \leq c - \frac{d}{2\euler}.
$
 This is possible, i.e., $c - \frac{d}{2\euler} > 0$, if the Lam\'e constants fulfill the relation
 \begin{align*}
     \lambda > \frac{2 \mu}{\euler - 1} > 1.16 \mu.
 \end{align*}
Using equation~\eqref{eq:proofConstEq(1)} we get coercivity by
    \begin{align*}
        a \lvert F \rvert_{\mathrm{F}}^2 + c (\det F)^2 - d \ln(\det F) + b 
        \geq &a \lvert F \rvert_{\mathrm{F}}^2 + B (\det F)^2 + b \\
        \geq &\min(a, B) \left(\lvert F \rvert_{\mathrm{F}}^2 + (\det F)^2\right) + b .
    \end{align*}
    Since the squared norm (see, e.g., \cite{HARTMANN20032767}) and the function $x \mapsto c x^2 - d\ln(x)$ (second derivative is positive) are convex on the interval $(0,\infty)$ and the sum of convex functions is again convex, the stored energy \eqref{eq2:generalStoredEnergy} is polyconvex.
\end{proof}

In TFM the condition~\eqref{eq2:constEqLameCondition} for the Lam\'e parameters is not a problem. For a standard choice with a Poisson ratio $\nu = 0.45$ and a Young's modulus $\tilde{E}= 10 000 Pa$, see~\cite{Schwarz2015}, we get Lam\'e constants $\lambda \approx 31 034 Pa$ and $\mu = 3448 Pa$ which clearly fulfill condition~\eqref{eq2:constEqLameCondition}. Now, that we have fixed a stored energy function $W$ determining the material law, we need to compute the constitutive equation determining the stress $\sigma(u)$. The stored energy function $W$ and the stress $\sigma$ are related via $\sigma = \frac{\partial W}{\partial F}$, see~\eqref{eq:hyperelastic}.

Using the matrix derivation rule for the determinant 
$
\frac{\partial \det X}{\partial X} = \det(X) X^{-\top},
$
see, e.g., \cite{MatrixDeriv}, and the chain rule on the stored energy function $W(F)$ in~\eqref{eq2:storedEnergy} we then get
\begin{align}
    \label{eq2:constitutiveEquation}
    \sigma(u) = \mu F + \frac{\lambda}{2} (\det F)^2 F^{-\top} - (\mu + \frac{\lambda}{2}) F^{-\top}
\end{align}
where we use the notation $F^{-\top} = \left( F^{-1} \right)^{\top}$.

\begin{remark}[Function space setting]
\label{rem:funcSpace}

\noindent
The stored energy function from Theorem~\ref{th2:constEq} for $p=s>2$ and the boundary conditions $u_0 = 0$ fulfills the conditions of Theorem~\ref{th2:existence Ball} for the existence of a solution and of Theorem~\ref{thm:existence_uniqueness} for the uniqueness of this solution. This means that the minimization problem~\eqref{eq2:MinProb} has at least one solution $u \in W^{2,p}(\Omega,\R^2) \cap W^{1,p}_0(\Omega,\R^2) $ for a force density $T \in L^p(\Omega,\R^2)$. Then we can choose $X = L^p(\Omega,\R^2)$ and $Y = W^{2,p}(\Omega,\R^2) \cap W^{1,p}_0(\Omega,\R^2)$ in the formulation of the parameter-to-state-map~\eqref{eq2:tfmForward}.

We may incorporate further prior information into the 
space $X$: E.g., we may include 
vanishing forces on the boundary $\partial\Omega$, 
and higher regularity by choosing 
$X =  H_0^1(\Omega,\R^2)$, equipped with the 
norm $\|v\|_X:=\|\nabla v\|_{L^2}$ (see \cite[Chapter 13.2]{leoni2009} for the norm property). By choosing the space $X = H_0^1(\Omega,\R^2) \hookrightarrow L^p(\Omega,\R^2) $ we can also remain in a Hilbert space setting.

As in the linear case in Remark~\ref{rem:forwardEmbeddingLin} the forward operator is the composition of the parameter-to-state-map and the embedding to $L^2(\Omega,\R^2)$. Then we get the forward operator of nonlinear 2D TFM by
\begin{align*}
    \hat{S} :  H_0^1(\Omega,\R^2) \hookrightarrow L^p(\Omega,\R^2) \to W^{2,p}(\Omega, \R^2) \cap W^{1,p}_0(\Omega,\R^2) \hookrightarrow L^2(\Omega, \R^2), \quad T \mapsto u.
\end{align*}
 The embeddings hold due to the Rellich-Kondrachov embedding theorem, see~\eqref{eq2:proofExistenceEmbedding} in the Appendix.



\end{remark}

\subsection{The Fr{\'e}chet derivative and its adjoint}
\label{sec2:Frechet}

Since the operator $S$ is nonlinear, we need to compute the Fr\'echet derivative of $S$, leading to a linearized problem, and its adjoint to solve the inverse problem with regularization algorithms as described in Section~\ref{subsec:numericsNonlinear2D}.

\begin{theorem}[Fr\'echet derivative]
    The Fr\'echet derivative of the operator $S$ that maps a given force density $T$ to the displacement $u$ in the function space setting from Remark~\ref{rem:funcSpace} is well-defined and given by $S'(T)h = v$, where the function $v$ solves
    \begin{align}
        \label{eq2:Frechet}
        \left\lbrace\,
        \begin{array}{@{}r@{}l@{\quad}l@{}}%
            - \mu \diver (\nabla v) - (\mu + \frac{\lambda}{2}) \diver \Bigl(F^{-\top} (\nabla v)^{\top} F^{-\top}\Bigr)  & & \\
            -  \frac{\lambda}{2} \diver \Bigl( 2 \det(F)^2 \tr(F^{-1} \nabla v) F^{-\top} - \det(F)^2 F^{-\top} (\nabla v)^{\top} F^{-\top} \Bigr) &= h & \text{ in } \Omega \\
            v &= 0 & \text{ on } \partial \Omega
            \end{array} \right.
    \end{align}
    with the deformation gradient $F = I + \nabla u$ and $u = S(T)$ solves the original problem~\eqref{eq2:BVP}.
\end{theorem}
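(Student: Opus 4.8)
The plan is to obtain $S'(T)$ from the Implicit Function Theorem, exploiting that the linearized elasticity operator is already known to be boundedly invertible by Theorem~\ref{thm:existence_uniqueness}, and then to identify the derivative explicitly by differentiating the constitutive equation~\eqref{eq2:constitutiveEquation}. Working in the spaces of Remark~\ref{rem:funcSpace}, I would put $Z\coloneqq L^p(\Omega,\R^2)$ and define
\begin{align*}
  & \mathcal{F} : \bigl(W^{2,p}(\Omega,\R^2)\cap W^{1,p}_0(\Omega,\R^2)\bigr)\times X \to Z, \\
  & \mathcal{F}(u,T) \coloneqq -\diver(\sigma(u)) - T,
\end{align*}
with $\sigma(u)$ as in~\eqref{eq2:constitutiveEquation} and $F = I+\nabla u$, so that $u = S(T)$ is precisely the unique zero of $\mathcal{F}(\cdot,T)$ in the neighbourhoods provided by Theorem~\ref{thm:existence_uniqueness}. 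It then suffices to verify that $\mathcal{F}$ is continuously Fr\'echet differentiable near an admissible pair $(\bar u,\bar T)$ and that $\partial_u\mathcal{F}(\bar u,\bar T)$ is boundedly invertible; the Implicit Function Theorem then yields that $S$ is $C^1$ in a neighbourhood of $\bar T$ with $S'(T) = -\bigl(\partial_u\mathcal{F}(u,T)\bigr)^{-1}\circ\partial_T\mathcal{F}(u,T)$, where $u = S(T)$.

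For the differentiability of $\mathcal{F}$ I would use that, because $p>2=n$, the space $W^{1,p}(\Omega,\R^{2\times 2})$ embeds continuously into $C^0(\overline\Omega,\R^{2\times 2})$ and is a Banach algebra; near $\bar u$ the determinant $\det F$ is bounded away from $0$ uniformly, so $F\mapsto F^{-\top}$ and $F\mapsto(\det F)^2$ are $C^\infty$ on a neighbourhood of the range of $F$ and the superposition operators they induce are smooth from $W^{1,p}$ into itself (the same Banach-algebra machinery already invoked for Theorem~\ref{thm:existence_uniqueness}, cf.~\cite[Theorem~6.1-4]{CiarletI}). Composing with the bounded linear operator $-\diver:W^{1,p}(\Omega,\R^{2\times 2})\to L^p(\Omega,\R^2)$ then shows $\mathcal{F}$ is smooth in $u$, and it is affine, hence smooth, in $T$, with $\partial_T\mathcal{F}$ equal to $-\mathrm{id}$ composed with the embedding $X\hookrightarrow Z$. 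The bounded invertibility of $\partial_u\mathcal{F}(\bar u,\bar T)$ on $W^{2,p}\cap W^{1,p}_0$ is not a new assertion: it is exactly the last statement of Theorem~\ref{thm:existence_uniqueness} about $h\mapsto -\diver(\sigma'(u)h)$.

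Granting this, $v\coloneqq S'(T)h$ is the unique solution in $W^{2,p}\cap W^{1,p}_0$ of $\partial_u\mathcal{F}(u,T)v = h$, i.e.~of $-\diver(\sigma'(u)v) = h$ in $\Omega$ together with $v=0$ on $\partial\Omega$, where $\sigma'(u)v\coloneqq \frac{d}{ds}\big|_{s=0}\sigma(u+sv)$. It remains to compute $\sigma'(u)v$ from~\eqref{eq2:constitutiveEquation}: with $\frac{d}{ds}F = \nabla v$ and the elementary matrix identities $\frac{d}{ds}F^{-\top} = -F^{-\top}(\nabla v)^{\top}F^{-\top}$ and $\frac{d}{ds}(\det F)^2 = 2(\det F)^2\tr(F^{-1}\nabla v)$, the product and chain rules give
\begin{align*}
  \sigma'(u)v = {}& \mu\,\nabla v + \Bigl(\mu+\tfrac{\lambda}{2}\Bigr)F^{-\top}(\nabla v)^{\top}F^{-\top} \\
  & + \tfrac{\lambda}{2}\Bigl(2(\det F)^2\tr(F^{-1}\nabla v)\,F^{-\top} - (\det F)^2 F^{-\top}(\nabla v)^{\top}F^{-\top}\Bigr),
\end{align*}
and inserting this into $-\diver(\sigma'(u)v)=h$ reproduces the system~\eqref{eq2:Frechet} verbatim.

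The hard part will be the differentiability step for $\mathcal{F}$: one has to make precise that the nonlinear superposition operators built from $F^{-\top}$ and $(\det F)^2$ really map the chosen Sobolev spaces into themselves and depend continuously differentiably on $u$ there. This is exactly where the hypothesis $p>2$ (so that $W^{1,p}$ is a Banach algebra embedded in $C^0$) and the uniform lower bound on $\det F$ near $\bar u$ — inherited from the regularity in Theorem~\ref{thm:existence_uniqueness} — are indispensable. By contrast, the invertibility of $\partial_u\mathcal{F}$ is already supplied by Theorem~\ref{thm:existence_uniqueness}, and the explicit computation of $\sigma'(u)v$ is routine matrix calculus, so these parts carry no real difficulty.
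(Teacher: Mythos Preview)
Your proposal is correct and follows essentially the same route as the paper: define the defect map $e(u,T)=-\diver(\sigma(u))-T$, invoke the Implicit Function Theorem together with the invertibility statement of Theorem~\ref{thm:existence_uniqueness}, and then compute $\sigma'(u)v$ by differentiating~\eqref{eq2:constitutiveEquation} via the standard matrix identities. The only difference is that you spell out more carefully why $\mathcal{F}$ is $C^1$ (Banach-algebra property of $W^{1,p}$ for $p>2$, uniform lower bound on $\det F$), whereas the paper absorbs this into the reference to Theorem~\ref{thm:existence_uniqueness}.
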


\begin{proof}
    With Theorem~\ref{thm:existence_uniqueness} the Fr\'echet derivative exists and can be calculated using the Implicit Function Theorem. For this, we define the mapping 
    $e:W^{2,p}(\Omega)\cap W^{1,p}_0(\Omega) \times L^p(\Omega)
    \to L^p(\Omega)$ by
    \begin{align*}
        e(u,T) = 
        - \diver (\sigma(u)) - T.  
    \end{align*} 
    Then the condition $e(S(T),T) = 0$ and the Implicit Function Theorem give us that the derivative of the operator $S$ is locally given by 
    \begin{align}
    \label{eq2:implicitFuncInverse}
        S'(T)h = -\frac{\partial e}{\partial u}(S(T),T)^{-1} \circ \frac{\partial e}{\partial T}(S(T),T)h .
    \end{align} 
    This is equivalent to the linearized problem
    \begin{align*}
        \frac{\partial e}{\partial u}(S(T),T) [S'(T)h] = - \frac{\partial e}{\partial T}(S(T),T) h
    \end{align*}
    Now we can compute the partial derivatives of the mapping $e$
    \begin{align*}
        \frac{\partial e}{\partial T}(S(T),T) h = 
        - h
, \quad
        \frac{\partial e}{\partial u}(S(T),T) v = 
        \varphi'(s)|_{s = 0} 
    \end{align*}
    with 
    \begin{align*}
        \varphi(s) = &- \diver(\sigma(u + sv)) - T  = (\mu + \frac{\lambda}{2}) \diver \Bigl((I + \nabla u + s\nabla v)^{-\top} \Bigr)\\
        &-\mu \diver(\nabla u + s\nabla v) - \frac{\lambda}{2}  \diver \Bigl(\det(I + \nabla u + s \nabla v)^2 (I + \nabla u + s \nabla v)^{-\top} \Bigr)
    \end{align*}
    for $u = S(T)$.
    By using the chain rule and matrix differentiation rules, see e.g. \cite{MatrixDeriv}, we arrive at
    \begin{align*}
        \varphi'(0) = &-\frac{\lambda}{2} \diver \Bigl( 2 \det(F)^2 \tr(F^{-1} \nabla v) F^{-\top} - \det(F)^2 F^{-\top} (\nabla v)^{\top} F^{-\top} \Bigr) \\
            & - \mu \diver (\nabla v) - (\mu + \frac{\lambda}{2}) \diver \Bigl(F^{-\top} (\nabla v)^{\top} F^{-\top}\Bigr). \qedhere
    \end{align*}   
\end{proof}

\begin{theorem}
    The Fr\'echet derivative $S'(T)$ is self-adjoint in $L^2(\Omega)$ for all $T$.
\end{theorem}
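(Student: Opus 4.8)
The plan is to reduce the statement to the symmetry of the bilinear form underlying the linearized problem \eqref{eq2:Frechet}. Let $\mathcal C(F):=\partial\sigma/\partial F(F)=\partial^2 W/\partial F^2(F)$ denote the fourth‑order elasticity tensor obtained by differentiating the constitutive law \eqref{eq2:constitutiveEquation}, and rewrite \eqref{eq2:Frechet} in weak form: $v=S'(T)h$ if and only if $v\in H^1_0(\Omega,\R^2)$ and
\[
  b(v,\psi):=\int_\Omega \mathcal C(F)[\nabla v]:\nabla\psi\,\dx=\int_\Omega h\cdot\psi\,\dx\qquad\text{for all }\psi\in H^1_0(\Omega,\R^2),
\]
where $F=I+\nabla u$ with $u=S(T)$. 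First I would observe that, by the ellipticity of the linearization at the solution $u$ underlying Theorem~\ref{thm:existence_uniqueness} (together with Korn's inequality), $b$ is bounded and coercive on $H^1_0(\Omega,\R^2)$; hence, via the chain $L^2(\Omega,\R^2)\hookrightarrow H^{-1}(\Omega,\R^2)\xrightarrow{\ b^{-1}\ }H^1_0(\Omega,\R^2)\hookrightarrow L^2(\Omega,\R^2)$, the map $S'(T)$ extends to a bounded (indeed compact) operator on $L^2(\Omega,\R^2)$, which is the precise sense in which self‑adjointness in $L^2$ is to be understood.

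For $h_1,h_2\in L^2(\Omega,\R^2)$ set $v_i:=S'(T)h_i$. Testing the weak equation for $v_2$ with $\psi=v_1$ and the one for $v_1$ with $\psi=v_2$ yields
\[
  \langle S'(T)h_1,h_2\rangle_{L^2}=\langle v_1,h_2\rangle_{L^2}=b(v_2,v_1),\qquad
  \langle h_1,S'(T)h_2\rangle_{L^2}=\langle h_1,v_2\rangle_{L^2}=b(v_1,v_2).
\]
Thus it remains to show $b(v,w)=b(w,v)$ for all $v,w$, equivalently that the pointwise bilinear form $(A,B)\mapsto \mathcal C(F)[A]:B$ on $\R^{2\times2}$ is symmetric. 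Conceptually this is immediate: $\mathcal C(F)$ is the Hessian of the scalar stored‑energy function $W$ at $F$, which is $C^\infty$ on $\{\det F>0\}$ (Theorem~\ref{th2:constEq}), so $\mathcal C(F)[A]:B=\partial^2W/\partial F^2(F)[A,B]$ is symmetric in $A$ and $B$ by equality of mixed partials. Plugging in $A=\nabla v$, $B=\nabla w$ and integrating gives $b(v,w)=b(w,v)$, and the two displayed identities then coincide, proving self‑adjointness.

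Since the explicit operator is already at hand, it is worth confirming the symmetry of $b$ directly, term by term in \eqref{eq2:Frechet}, using $A:B=\tr(A^\top B)$: the Laplacian part contributes $\mu\int_\Omega\nabla v:\nabla w\,\dx$, manifestly symmetric; the part with $F^{-\top}(\nabla v)^\top F^{-\top}$ contributes a multiple of $\int_\Omega\det(F)^2\tr\!\bigl(F^{-1}\nabla v\,F^{-1}\nabla w\bigr)\,\dx$, symmetric under $v\leftrightarrow w$ by cyclic invariance of the trace; and the part with $\tr(F^{-1}\nabla v)F^{-\top}$ contributes a multiple of $\int_\Omega\det(F)^2\tr(F^{-1}\nabla v)\,\tr(F^{-1}\nabla w)\,\dx$, which is obviously symmetric. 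The scalar factors $\det(F)^2=\det(I+\nabla u)^2$ depend only on $u$ and do not affect symmetry. There is no substantial obstacle here: the conceptual core (the Hessian of a potential is symmetric) is one line, and the only points requiring care are making precise in which space $S'(T)$ is regarded as an operator — handled by the boundedness and coercivity of $b$ and the density of $H^1_0$ in $L^2$ — and the routine trace bookkeeping in the direct verification.
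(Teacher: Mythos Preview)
Your argument is correct and follows essentially the same route as the paper: both reduce self-adjointness of $S'(T)$ to symmetry of the bilinear form obtained from the linearized equation after integration by parts, and both verify symmetry of the $F^{-\top}(\nabla v)^\top F^{-\top}$ term via cyclic invariance of the trace. The paper frames this through the Implicit Function Theorem representation $S'(T)=-\bigl(\partial e/\partial u\bigr)^{-1}\circ\partial e/\partial T$ and shows $\partial e/\partial u$ is $L^2$-self-adjoint directly, whereas you cast it in weak form via the bilinear form $b$; these are equivalent. Your additional conceptual observation that $\mathcal C(F)=\partial^2 W/\partial F^2$ is the Hessian of a scalar potential, hence automatically symmetric, is a clean shortcut the paper does not make explicit; it renders the term-by-term check merely confirmatory. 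One small imprecision: the structure $F^{-\top}(\nabla v)^\top F^{-\top}$ appears with two different scalar prefactors, $(\mu+\tfrac{\lambda}{2})$ and $-\tfrac{\lambda}{2}(\det F)^2$, not a single multiple of $(\det F)^2$; this does not affect the symmetry argument, since each term is symmetric for the same reason.
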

\label{thm:frechet}

\begin{proof} 
    To compute the adjoint operator we use the expression for the Fr\'echet derivative that we get from the Implicit Function Theorem~\eqref{eq2:implicitFuncInverse}. Then its adjoint is given by
        $$\bigl( S'(T) \bigr)^* g 
        = - \left( \frac{\partial e}{\partial T}(S(T),T) \right)^*
       \left( \left( \frac{\partial e}{\partial u}(S(T),T) \right)^{-1} \right)^*
        g$$
    and can be calculated via  
    \begin{align}
        \label{eq2:adjointFormel}
        \bigl( S'(T) \bigr)^* g 
        = - \left( \frac{\partial e}{\partial T}(S(T),T) \right)^* w
    \end{align}
        where the function $w$ solves
    $
        \left( \frac{\partial e}{\partial u}(S(T),T) \right)^{*} 
        w = g
    $.
    Now we compute the adjoint operator $\left( \frac{\partial e}{\partial u}(S(T),T) \right)^{*} $ by applying partial integration, using the boundary conditions $v,w=0$ on $\partial \Omega$ and the equality $A : B = \tr(A^{\top} B)$. We have
    \begin{align*}
        \left\langle \frac{\partial e}{\partial u}(S(T),T)   v, w \right\rangle_{L^2(\Omega)} 
        = &\int_\Omega -\mu \diver(\nabla v) w  
         - \left(\mu + \frac{\lambda}{2} \right) \diver \biggl(F^{-\top} (\nabla v)^{\top} F^{-\top} \biggr) w\\
        &- \frac{\lambda}{2} \diver \biggl( 2 (\det F)^2 \tr(F^{-1} \nabla v) F^{-\top} - (\det F)^2 F^{-\top} (\nabla v)^{\top} F^{-\top} \biggr) w \dx \\
        = &\int_\Omega \Big(\mu \nabla v : \nabla w  
         + \left(\mu + \frac{\lambda}{2} - \frac{\lambda}{2} (\det F)^2 \right) \biggl(F^{-\top} (\nabla v)^{\top} F^{-\top} \biggr) : \nabla w\\
        &+ \lambda  (\det F)^2 (F^{-\top}: \nabla v) (F^{-\top} : \nabla w)\Big) \, \dx.
    \end{align*}
    Since the trace is invariant under circular shifts, we get 
    \begin{align*}
        (F^{-\top} (\nabla v)^{\top} F^{-\top}) : \nabla w
        &= \tr \biggl( F^{-1} (\nabla v) F^{-1} (\nabla w) \biggr) \\
        &= \tr \biggl( (\nabla v) F^{-1} (\nabla w) F^{-1} \biggr)
        = \nabla v : (F^{-\top} (\nabla w)^{\top} F^{-\top}), 
    \end{align*}
    and by a partial integration analogous to the one above we see that
    \begin{align*}
        \left\langle  \frac{\partial e}{\partial u}(S(T),T)   v, w \right\rangle_{L^2(\Omega)} 
        =
        \left\langle   v,  \frac{\partial e}{\partial u}(S(T),T)  w \right\rangle_{L^2(\Omega)},
    \end{align*}
    i.e. the operator $\frac{\partial e}{\partial u}(S(T),T)$ is self-adjoint.

     Since the operator $\frac{\partial e}{\partial T}(S(T),T)$ only changes the sign of its argument, i.e. $\frac{\partial e}{\partial T}(S(T),T) v = - v$, it is self-adjoint and, with equation~\eqref{eq2:adjointFormel}, we obtain 
     \begin{align*}
         S'(T)^* = - \frac{\partial e}{\partial T}(S(T),T) \left(\frac{\partial e}{\partial u}(S(T),T) \right)^{-1} 
         = - \left(\frac{\partial e}{\partial u}(S(T),T) \right)^{-1} \frac{\partial e}{\partial T}(S(T),T) = S'(T),
     \end{align*}
     i.e., the Fr\'echet derivative $S'(T)$ is self-adjoint.
\end{proof}

\begin{remark}
As pointed out in Remark \ref{rem:funcSpace}, we will consider a forward operator $\hat{S}$ defined 
on $H^1_0(\Omega,\mathbb{R}^2)$, equipped with the norm $\|T\|_{H^1_0}:=\|\nabla T\|_{L^2}$. Then 
$\hat{S}'(T) =  S'(T)\circ j$ with the operator $S'(T)$ defined on $L^2(\Omega,\R^2)$ from Theorem~\ref{thm:frechet} and the embedding $j:H^1_0(\Omega,\mathbb{R}^2\hookrightarrow L^2(\Omega,\mathbb{R}^2)$. Then we have $\tilde{S}'[T]^*= j^*\circ S'(T)^* = j^*\circ S'(T)$. A straightforward computation 
shows that $j^*:L^2(\Omega,\mathbb{R}^2)\to H^1_0(\Omega,\mathbb{R}^2)$ is given by $j^*=(-\Delta)^{-1}$ 
with the vector Dirichlet Laplacian $\Delta$. 
\end{remark}


\section{Numerical results}

\label{sec:numerics}

\begin{figure}[H]
\centering
\begin{subfigure}{0.33\textwidth}
\centering
\includegraphics[width=\linewidth]{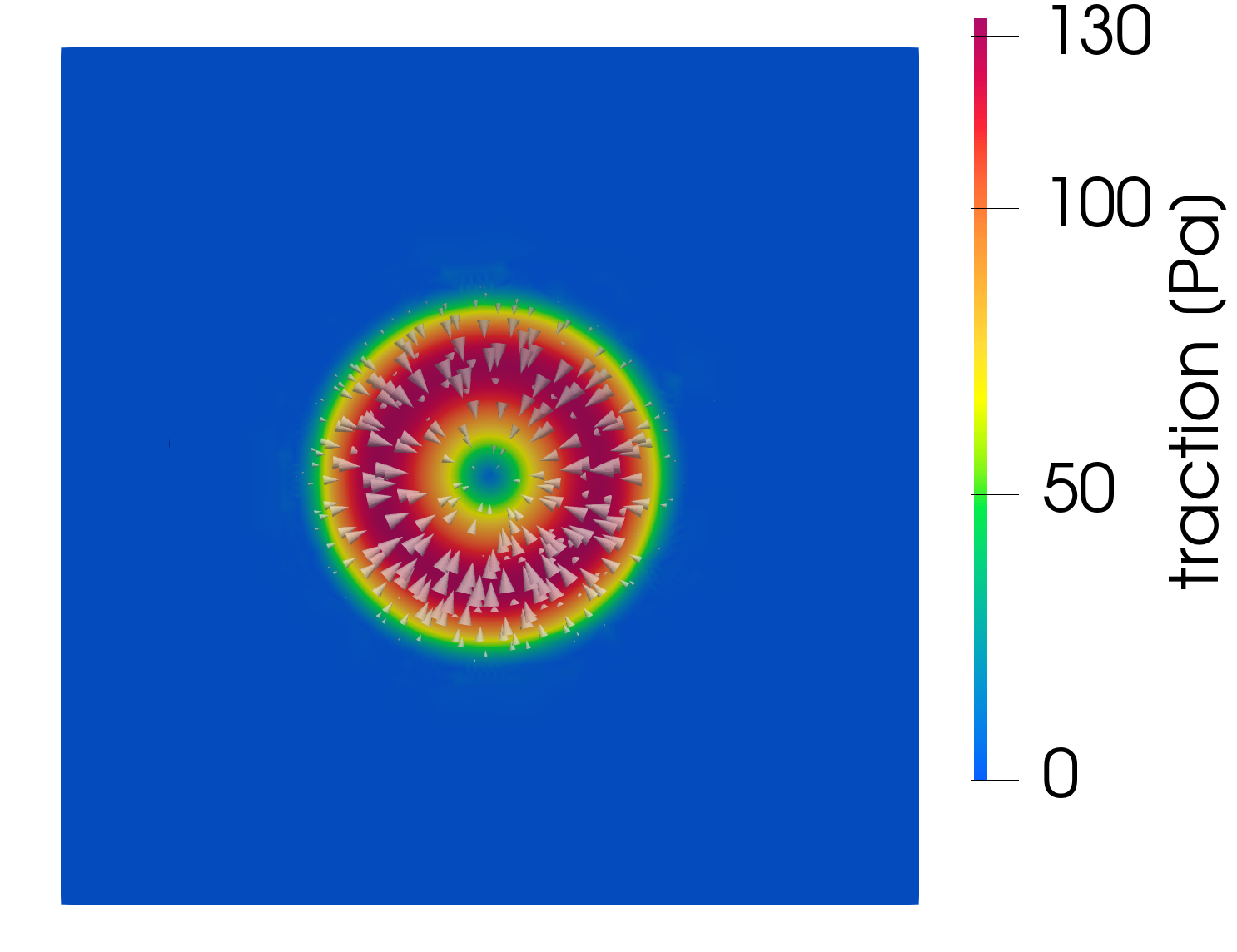}
\caption{Ground truth.}
\end{subfigure}%
\hfill
\begin{subfigure}{0.33\textwidth}
\centering
\includegraphics[width=\linewidth]{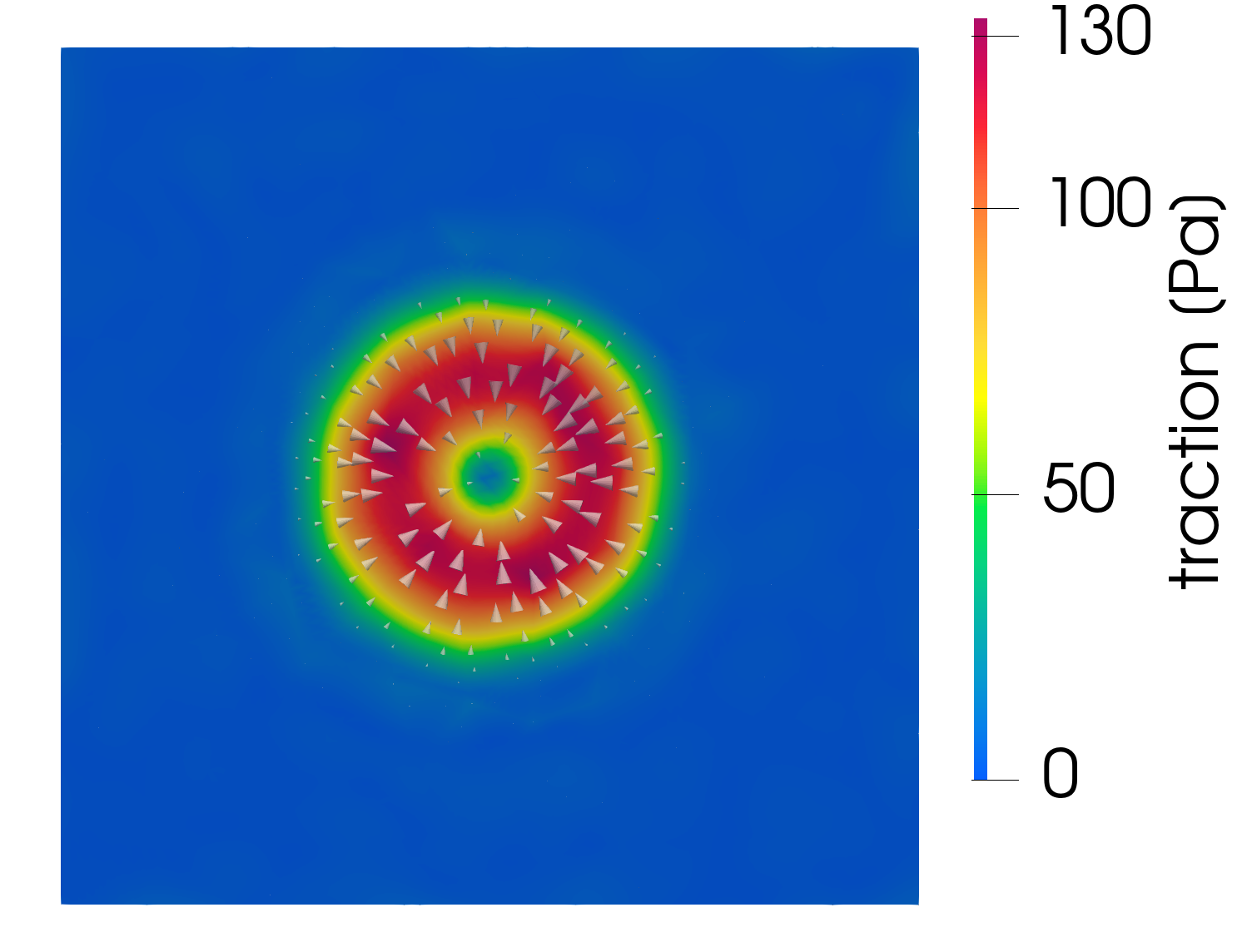}
\caption{Reconstruction.}
\end{subfigure}
\hfill
\begin{subfigure}{0.33\textwidth}
\centering
\includegraphics[width=\linewidth]{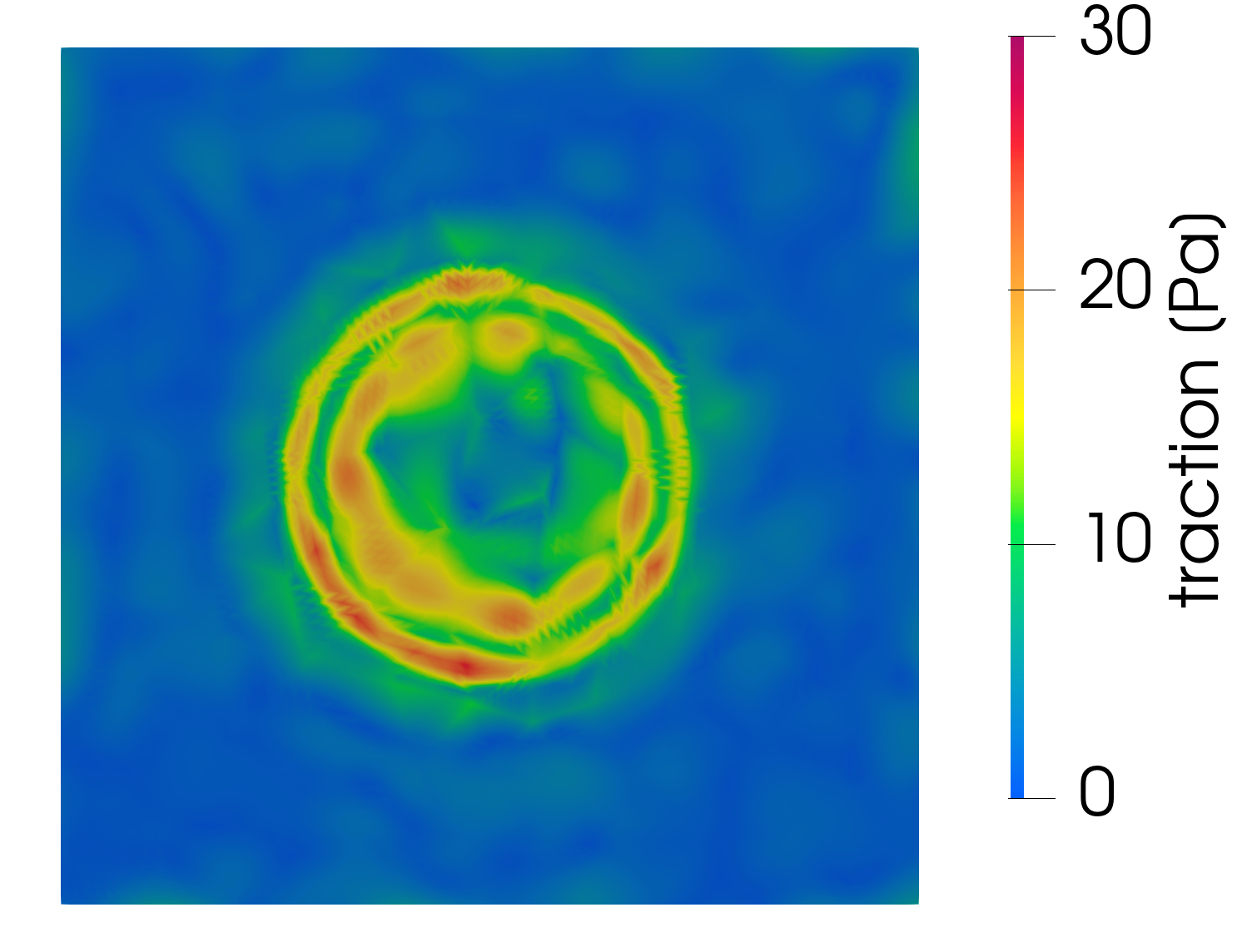}
\caption{Error.}
\end{subfigure}
\caption{2.5D linear TFM: simulated traction force via~\eqref{eq:force1}. The color code shows the magnitude of the traction stress (force per area), which is physically measured in Pa.}
\label{fig:3D_circ}
\end{figure}

Finally, we test our results on simulated data using the inverse problems python library `regpy'~\cite{regpy}. Further details can be found on Git~\url{https://github.com/regpy/regpy}. The TFM codes can be found on Gitlab at~\url{https://gitlab.gwdg.de/sarnighausen/traction-force-microscopy}. Evaluating the forward operator means solving a BVP. To this end, we use third order finite elements on an unstructured mesh from the finite element python library 'ngsolve'~\cite{ngsolve}. Details can also be found on the homepage~\url{https://ngsolve.org/}.
Then we generate the simulated noise-free data by applying the forward operators to a simulated traction stress. To prevent committing an inverse crime, we add Gaussian noise and use a different FEM mesh for reconstruction. Some basic examples of inverse crimes and how to avoid them can be found in \cite[Chapter II]{inverseCrime}.

\subsection{Simulated traction stresses}

\begin{figure}
\centering
\begin{subfigure}{0.33\textwidth}
\centering
\includegraphics[width=\linewidth]{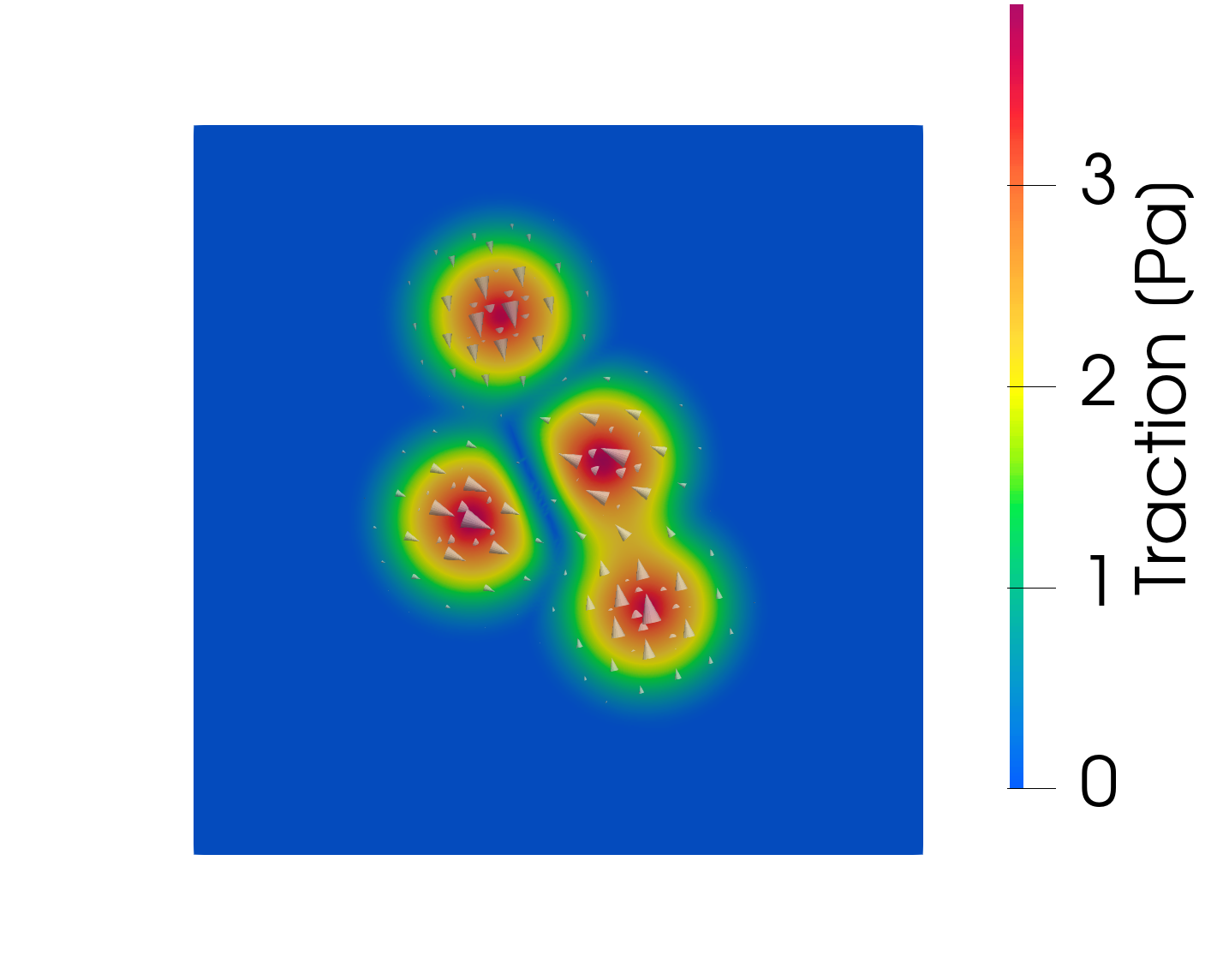}
\caption{Ground truth.}
\end{subfigure}%
\hfill
\begin{subfigure}{0.33\textwidth}
\centering
\includegraphics[width=\linewidth]{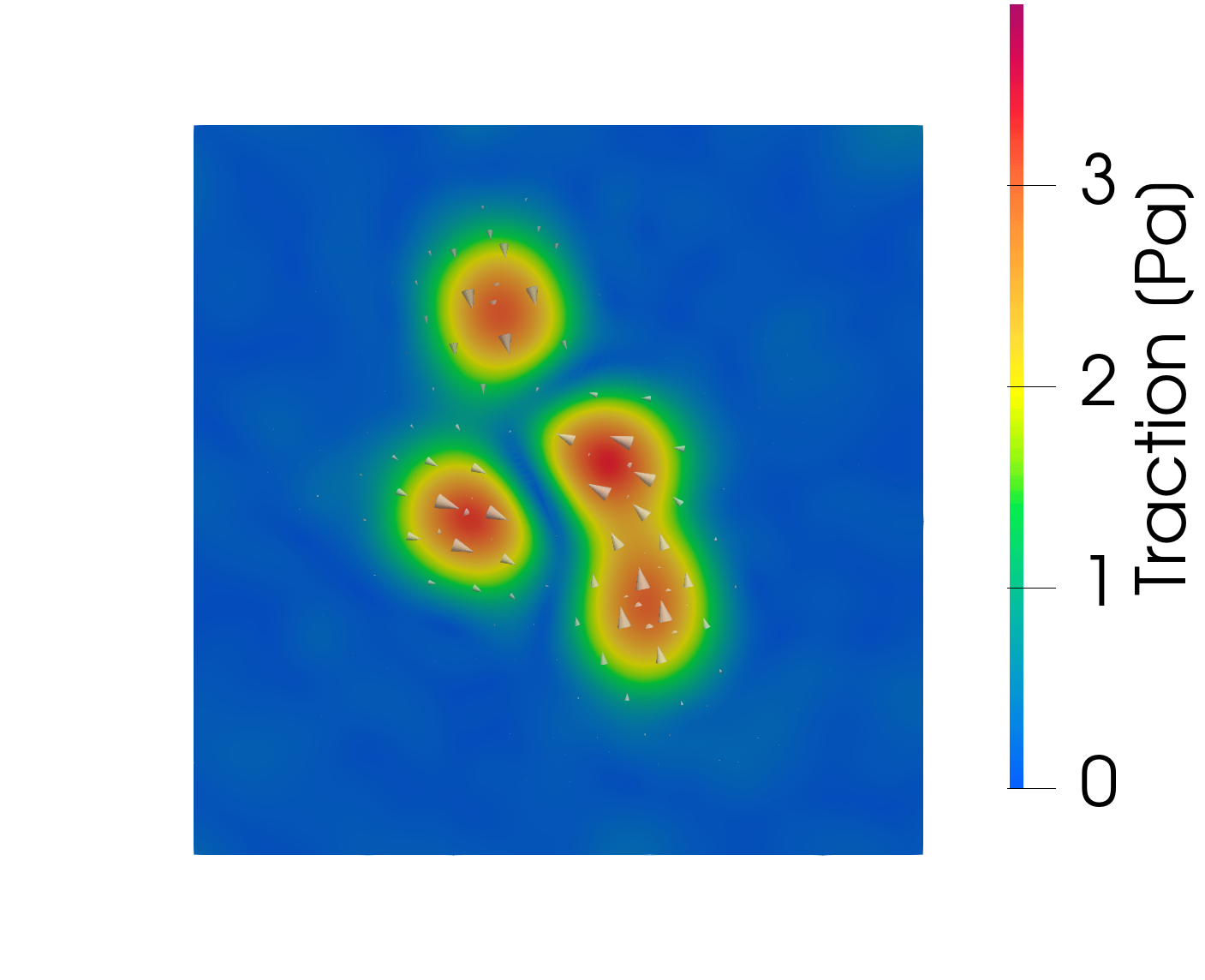}
\caption{Reconstruction.}
\end{subfigure}
\hfill
\begin{subfigure}{0.33\textwidth}
\centering
\includegraphics[width=\linewidth]{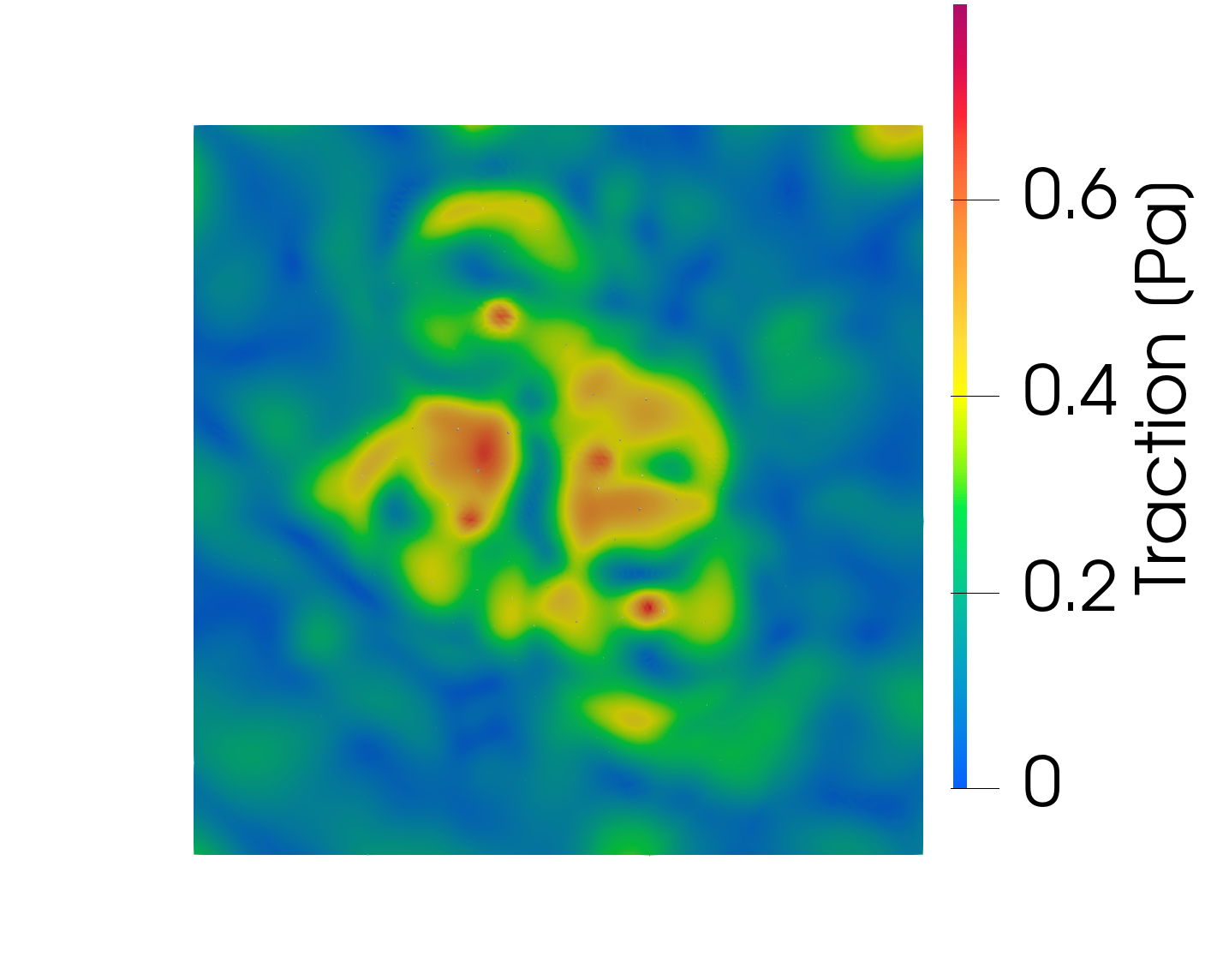}
\caption{Error.}
\end{subfigure}
\caption{2.5D linear TFM: simulated traction force as in~\eqref{eq:force2}.}
\label{fig:3D_cell}
\end{figure}

We test the reconstruction algorithms for two different force fields on a substrate with Young's modulus $\tilde{E}=10\, \mathrm{kPa}$ and Poisson ratio $\nu=0.45$ which are similar to experimental data. 
The first force field $t_1$ is a symmetric ring that pulls towards the middle, see Figure~\ref{fig:3D_circ}. It is defined by

\begin{align}
\label{eq:force1}
    t_1(x) =  \begin{cases}  a \cdot e^{\frac{-1}{1- (x_1^2 + x_2^2)}}  
    (-x_1,-x_2,0)^{\top}, & \lvert x \rvert \leq 1 \\
         0, & \text{else} \end{cases}
\end{align}
where the constant $a$ determines the magnitude of the force. We choose $a = 1000$. In a physical experiment, the unit of $a$ is given in Pascal (Pa). 

The second force is a more realistic simplification of a cell force. It consists of four force spots, see Figure~\ref{fig:3D_cell}, and is defined by
\begin{align}
\label{eq:force2}
t_2(x) = 10 \cdot e^{\frac{-1}{1- \lvert x - y^i \rvert^2 }} (d_1^i, d_2^i, 0)^{\top}, \quad \lvert x - y^i \rvert \leq 1, 
\quad\mbox{with}\quad 
    \begin{tabular}{ccccc}
        i & $y_1^i$ &  $y_2^i$ & $d_1^i$ & $d_2^i$\\
        \hline
        1 & -0.6  & -0.2 & 1 & -0.4 \\
        2 & 0.3 & 0.2 & -1 & 0.4 \\
        3 & 0.6 & -0.8 & -0.2 & 1 \\
        4 & -0.4 & 1.2 & 0.2 & -1 \\
    \end{tabular}
\end{align}

\begin{figure}[h]
\centering
\begin{subfigure}[t]{0.33\textwidth}
\centering
\includegraphics[width=\linewidth]{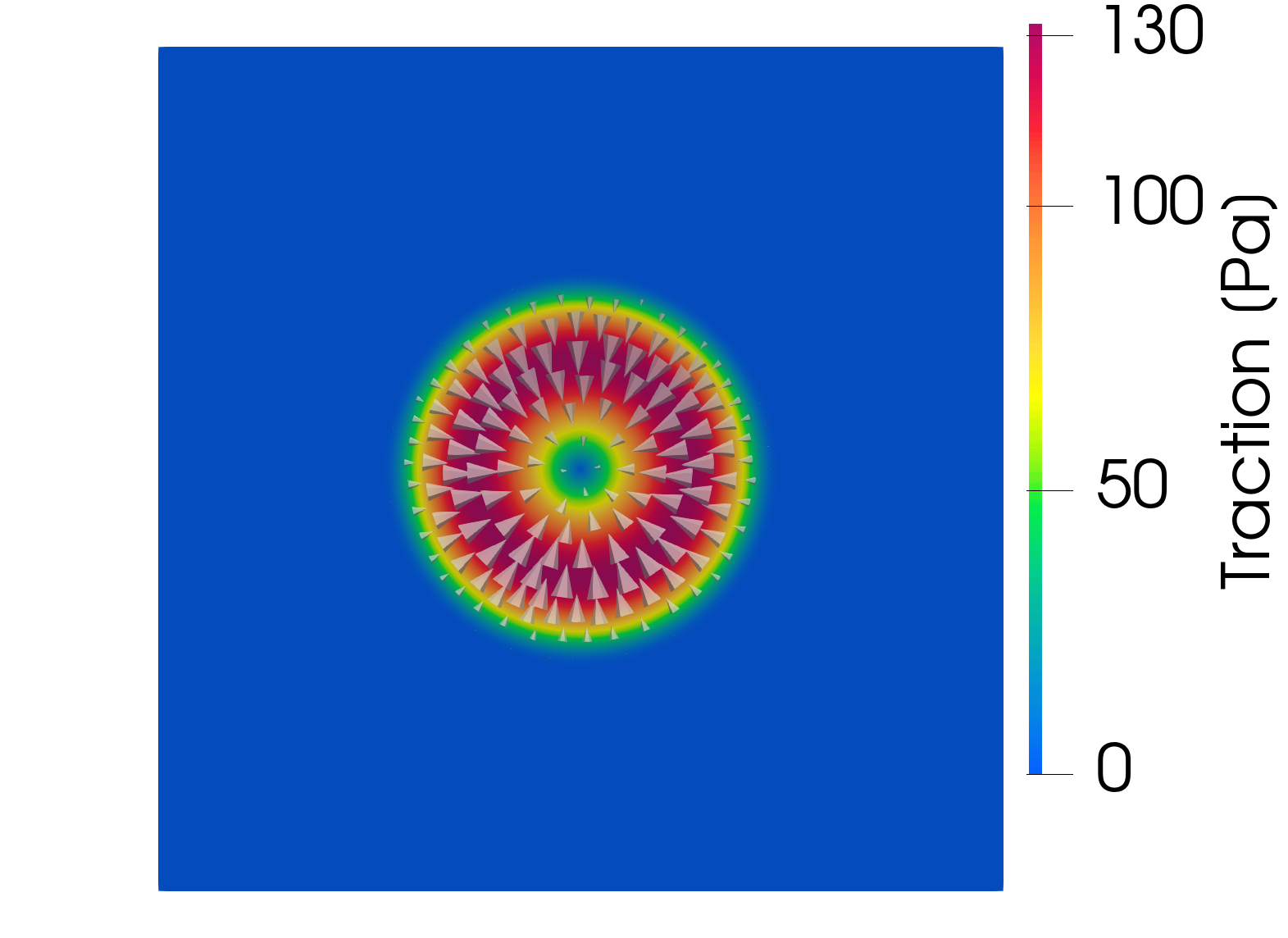}
\caption{Ground truth.}
\label{fig:2D_circ_true}
\end{subfigure}%
\begin{subfigure}[t]{0.33\textwidth}
\centering
\includegraphics[width=\linewidth]{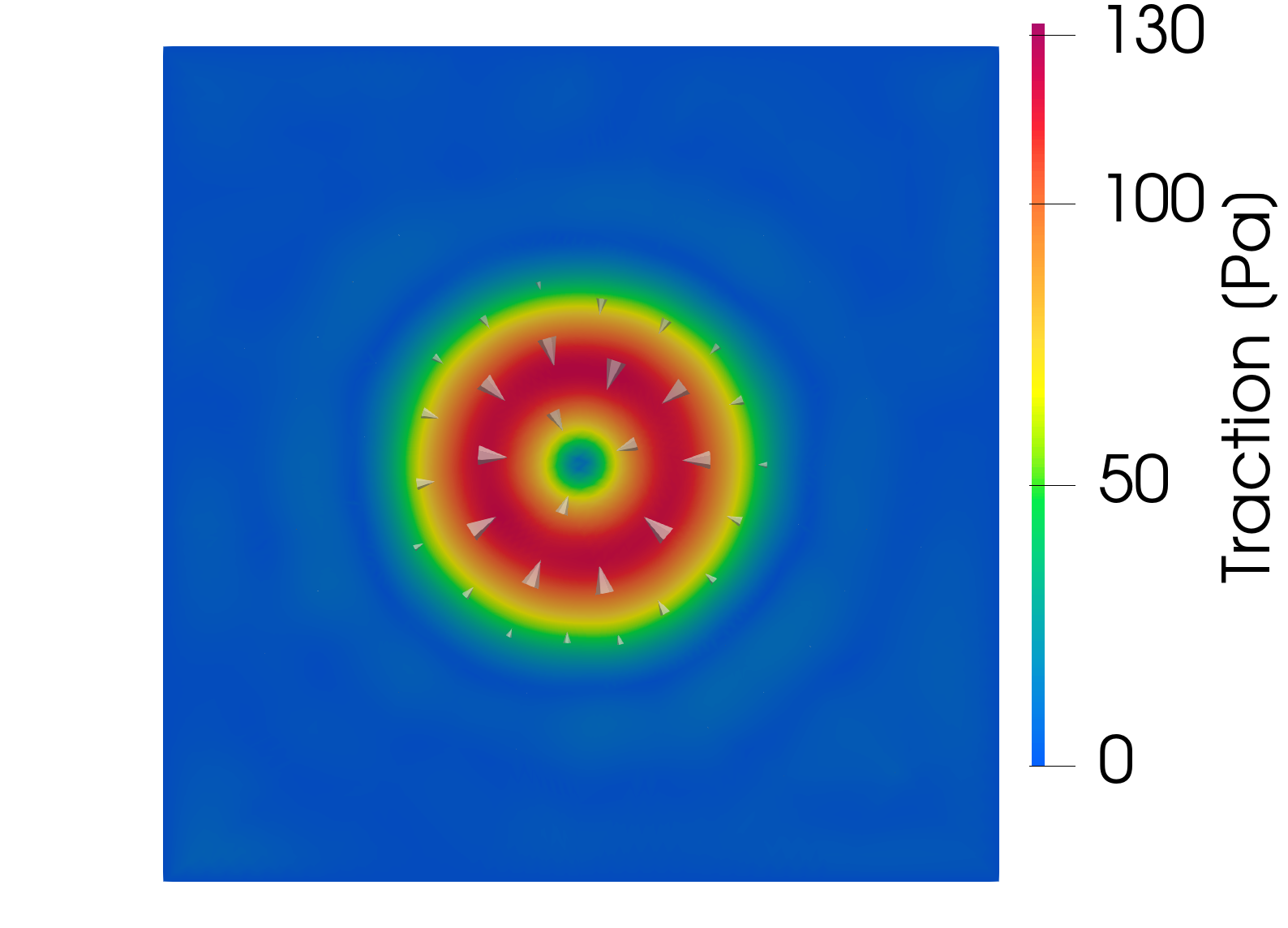}
\caption{Reconstruction $L^2$.}
\label{fig:2D_circ_rec_L2}
\end{subfigure}%
\begin{subfigure}[t]{0.33\textwidth}
\centering
\includegraphics[width=\linewidth]{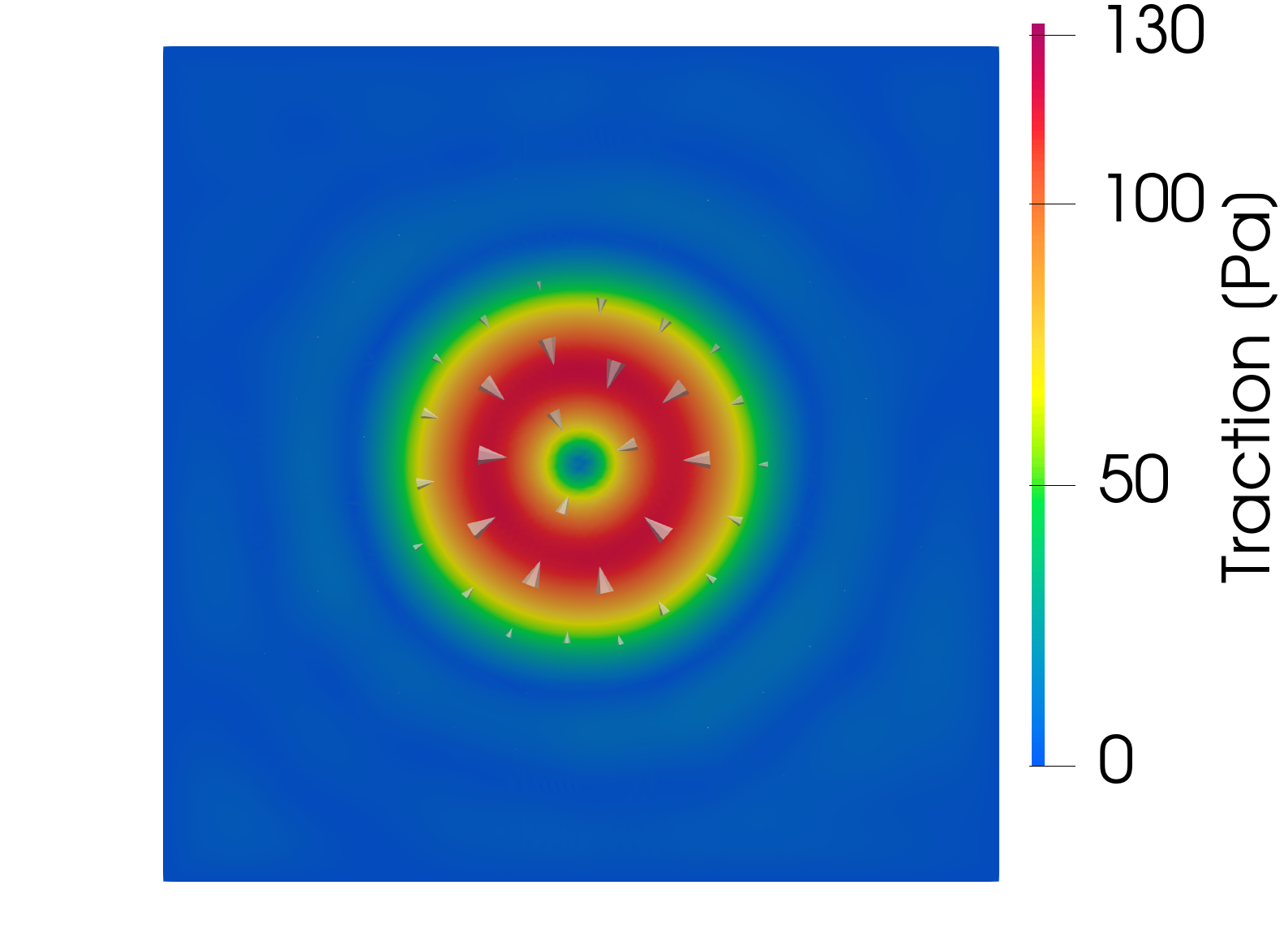}
\caption{Reconstruction $H^1_0$.}
\label{fig:2D_circ_rec_H01}
\end{subfigure}
\\
\begin{subfigure}[t]{0.33\textwidth}
\centering
\includegraphics[width=\linewidth]{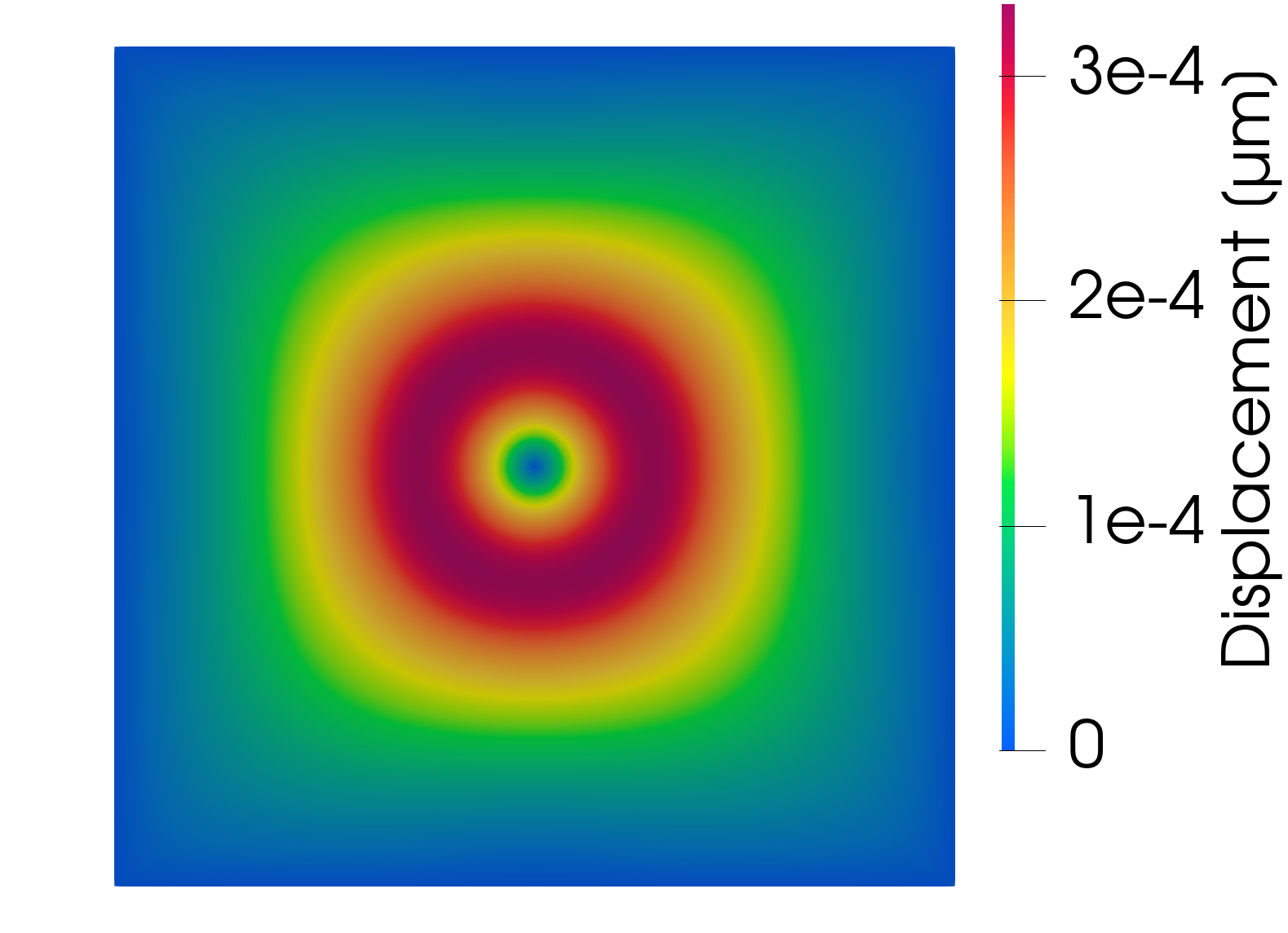}
\caption{Data.}
\label{fig:2D_circ_disp}
\end{subfigure}%
\begin{subfigure}[t]{0.33\textwidth}
\centering
\includegraphics[width=\linewidth]{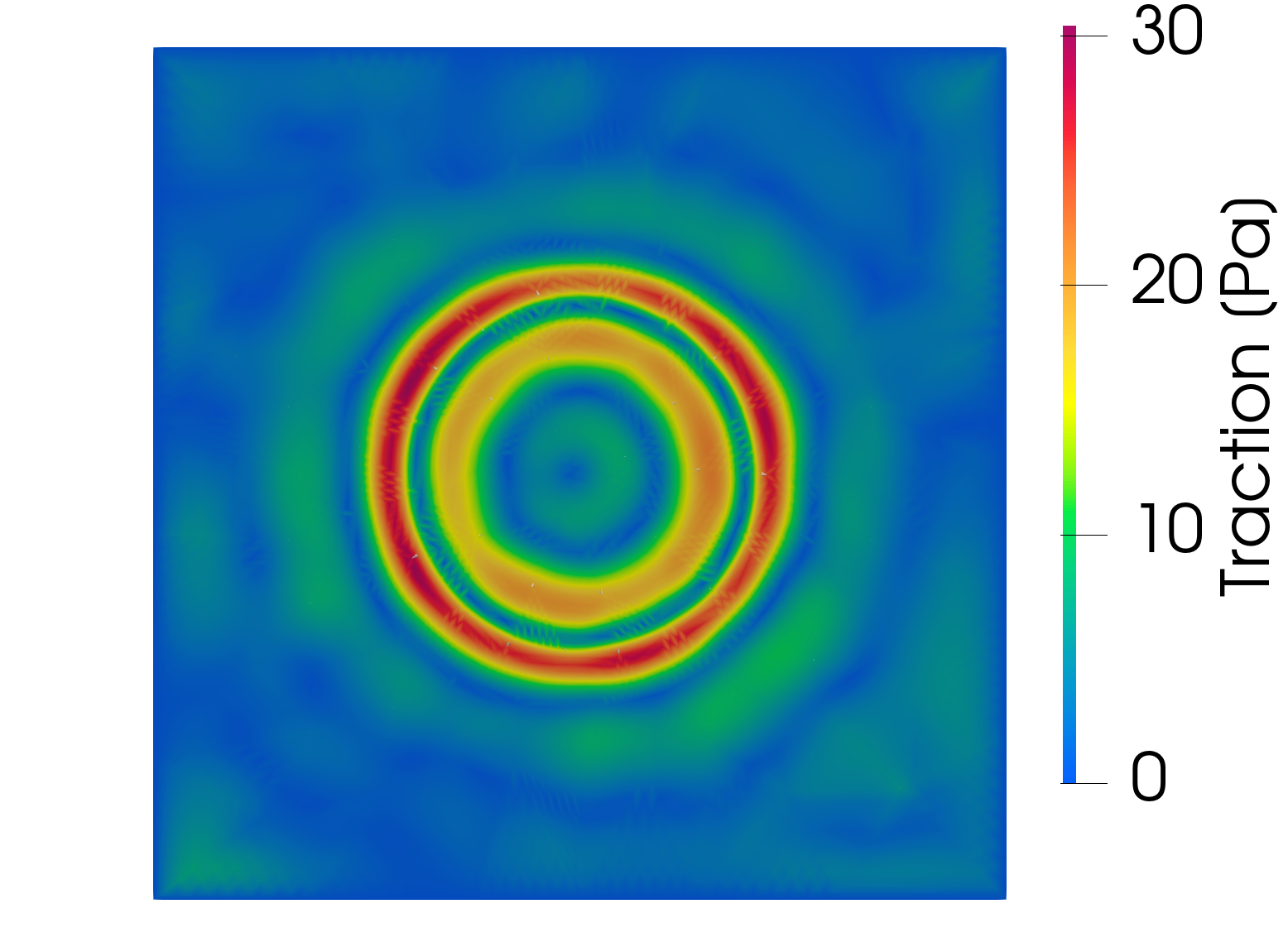}
\caption{Error $L^2$.}
\label{fig:2D_circ_err_L2}
\end{subfigure}%
\begin{subfigure}[t]{0.33\textwidth}
\centering
\includegraphics[width=\linewidth]{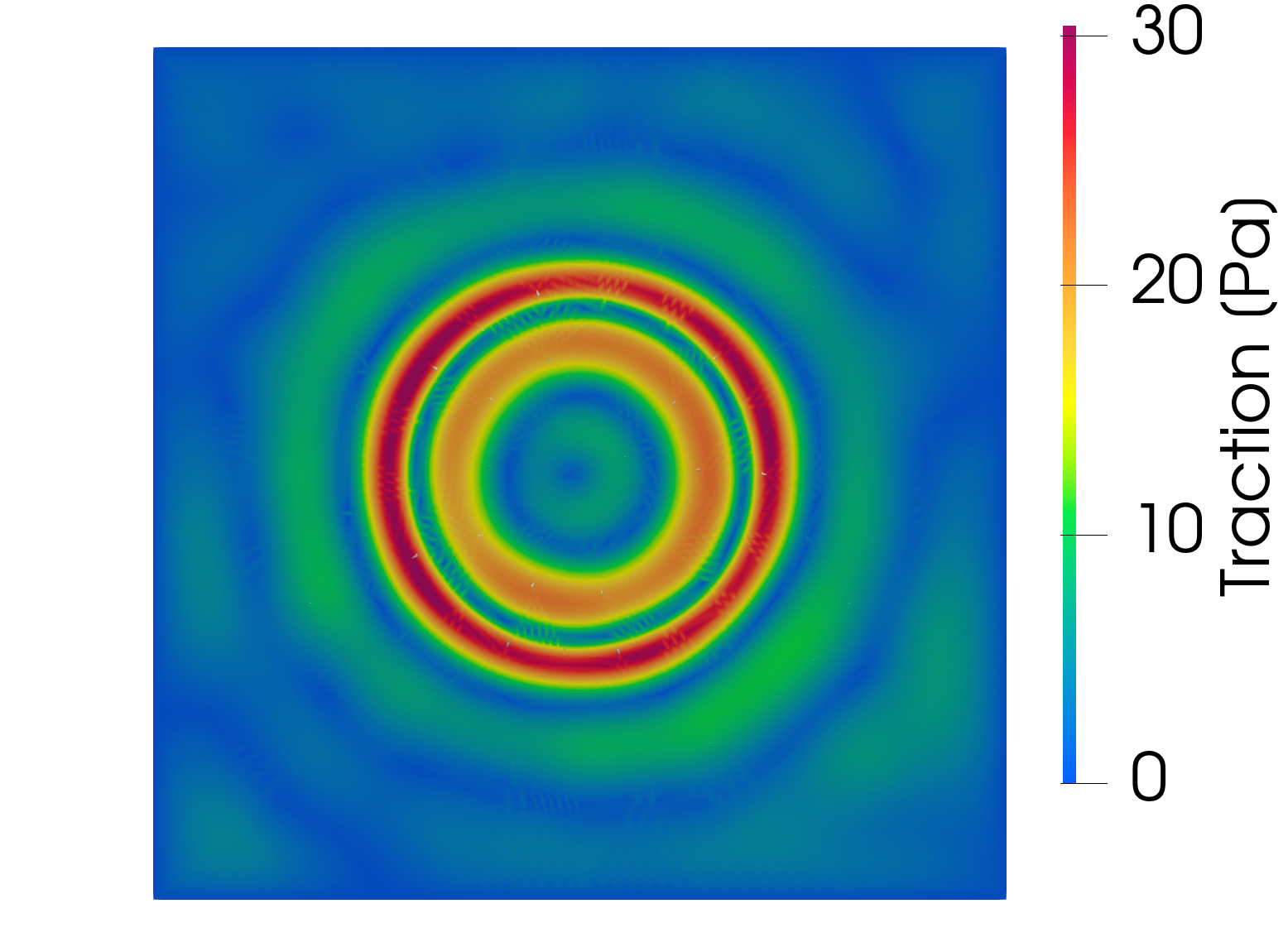}
\caption{Error $H^1_0$.}
\label{fig:2D_circ_err_H01}
\end{subfigure}
\caption{2D nonlinear TFM: simulated traction ground truth  as defined in \eqref{eq:force1} (a), reconstruction with $L^2$-penalty (b) and $H^1_0$-penalty (c), noise-free displacement data (d) and error with $L^2$-penalty (e) and $H^1_0$-penalty (f).}
\label{fig:2D_circ}
\end{figure}

\subsection{Linear 2.5D model}

To reconstruct the forces from the displacement data $u^\delta$, we use the conjugate gradient method applied to the normal equations
\begin{align}
\label{eq:normalEquations}
    \hat{A}^* \hat{A} t = \hat{A}^* u^\delta
\end{align}
(CGNE), see e.g.~\cite{Engl1996RegularizationOI}. As a stopping criterion we use the discrepancy principle, i.e. we stop the iteration the first time when
\begin{align}
\label{eq:discrepancy}
    \| \hat{A} t_k - u^\delta \| \leq \tau \delta
\end{align}    
is fulfilled for the forward operator $\hat{A}$, the current iterate $t_k$, the measured data $u^\delta$, the norm of the noise $\delta$ and a constant $\tau > 1$.

The CGNE method is a fast iterative method which is regularized by early stopping and can be used for solving linear inverse problems. The reconstruction and error are displayed in Figures \ref{fig:3D_circ} and~\ref{fig:3D_cell}. In Table~\ref{tab:3D} we see that we get good reconstructions for both forces even with a higher noise level. The run time for the force field~\eqref{eq:force2} is higher than for the force field~\eqref{eq:force1} due to a larger finite element matrix (degrees of freedom), even though the iteration is stopped earlier resulting from a higher noise level. 

\begin{table}[h]
    \centering
    \begin{tabular}{|c|c|c|c|c|c|c|}
    \hline
         & $L^2$-error & noise level & run time & iterations & ndof & $\tau$ \\
    \hline
      force field~\eqref{eq:force1}   & $15.91\%$ & $5\%$ & $12.68$ s & 30 & 15 552 & 1.2\\
    \hline
      force field~\eqref{eq:force2}   & $23.65 \%$ & $9.97 \%$ & $25.32$ s & 10 & 28 320 &1.01\\
    \hline
    \end{tabular}
    \caption{Reconstruction error, noise level, run time, number of CGNE iterations and degrees of freedom of the finite element space for both forces.}
    \label{tab:3D}
\end{table}

\subsection{Nonlinear pure 2D model}
\label{subsec:numericsNonlinear2D}

To numerically evaluate the nonlinear forward mapping, 
the deformation field is found by the minimizing the stored energy~\eqref{eq2:storedEnergy} with Newton's method (see~\url{https://ngsolve.org/}). 
For solving the nonlinear inverse problem, we use the truncated Newton CG method, see~\cite[Section 2]{Hanke1997RegularizingPO}. After linearizing the forward operator, the Newton equation $S'(T_k)h_k = u^\delta - S(T_k)$ with noisy measured displacement $u^\delta$ for the $k$-th iteration is solved with the CGNE method for the normal equation $S'(T_k)^* S'(T_k)h_k = S'(T_k)^* \bigl( u^\delta - S(T_k) \bigr)$ (inner iteration) with $S'(T_k), S'(T_k)^*$ as described in Section \ref{sec2:Frechet}, then the force density is updated by $T_{k+1} = T_k + h_k$ (outer iteration). The inner iteration is stopped early when the residual of the inner linearized problem is reduced to a percentage $0 < \rho < 1$ of the residual of the outer nonlinear problem. Here we choose $\rho = 0.7$. We stop the outer iteration again with the discrepancy principle as described in Equation~\eqref{eq:discrepancy}.

The function space setting from Remark~\ref{rem:funcSpace} yields $X = H_0^1(\Omega,\R^2)$ and $Y = L^2(\Omega,\R^2)$. However, numerically we also try the choice $X = L^2(\Omega,\R^2)$.
Depending on the choice of $X$, we use
different norms when computing the solution of the normal equations in the CGNE method, leading to different results. 

The results for the first force field~\eqref{eq:force1} are summarized in Figure~\ref{fig:2D_circ} and Table~\ref{tab:err_circ_2D} and for the second force field~\eqref{eq:force2} in Figure~\ref{fig:2D_cell} and Table~\ref{tab:err_cell_2D}. For both force fields we choose the effective thickness $h=1$ and the tolerance $\tau = 1.01$ in the discrepancy principle.

\begin{table}[h]
    \centering
    \begin{tabular}{|c|c|c|c|}
    \hline
         & error  & run time & iterations \\
         \hline 
        $L^2$ & $\mathbf{20.57\%}$ & $10.17$s  & 9 \\
        $H^1_0$ & $22.86\%$ & $60.11$s & 37 \\
        \hline
    \end{tabular}
    \caption{Reconstruction error, run time and number of Newton CG iterations of nonlinear 2D TFM for the first force~\eqref{eq:force1} with a noise level of $3.54 \%$ and $2048$ degrees of freedom of the finite element space.}
    \label{tab:err_circ_2D}
\end{table}

\begin{table}[h]
    \centering
    \begin{tabular}{|c|c|c|c|c|}
        \hline
         noise level & & error & run time & iterations \\
         \hline 
         $7.81\%$ &  $L^2$ & $\mathbf{51.16\%}$ & $5.25 $s &  $6$ \\
         & $H^1_0$ & $51.27\%$  & $10.36$s & $7$  \\
         \hline
         $15.63\%$ & $L^2$ & $ 82.93\%$  & $7.21$s  & $5$ \\
         &  $H^1_0$ & $\mathbf{55.92\%}$  & $11.48$s & $6$ \\
         \hline
    \end{tabular}
    \caption{Reconstruction error, run time and number of Newton CG iterations of nonlinear 2D TFM for the second force~\eqref{eq:force2} with $5240$ degrees of freedom of the finite element space.}
    \label{tab:err_cell_2D}
\end{table}

The noise level (in \%) is calculated by 
\begin{align}
\label{eq2:noiselevel}
    \text{noise level} = \frac{\lVert \text{noise} \rVert_{L^2(\Omega)}}{\lVert \text{exact data} \rVert_{L^2(\Omega)} } \cdot 100
\end{align}
where we use Gaussian noise 
in accordance with the experiment, see~\cite{Plotnikov2014, schwarzComparison}.

\begin{figure}[h]
\centering
\begin{subfigure}[t]{0.3\textwidth}
\centering
\includegraphics[width=\linewidth]{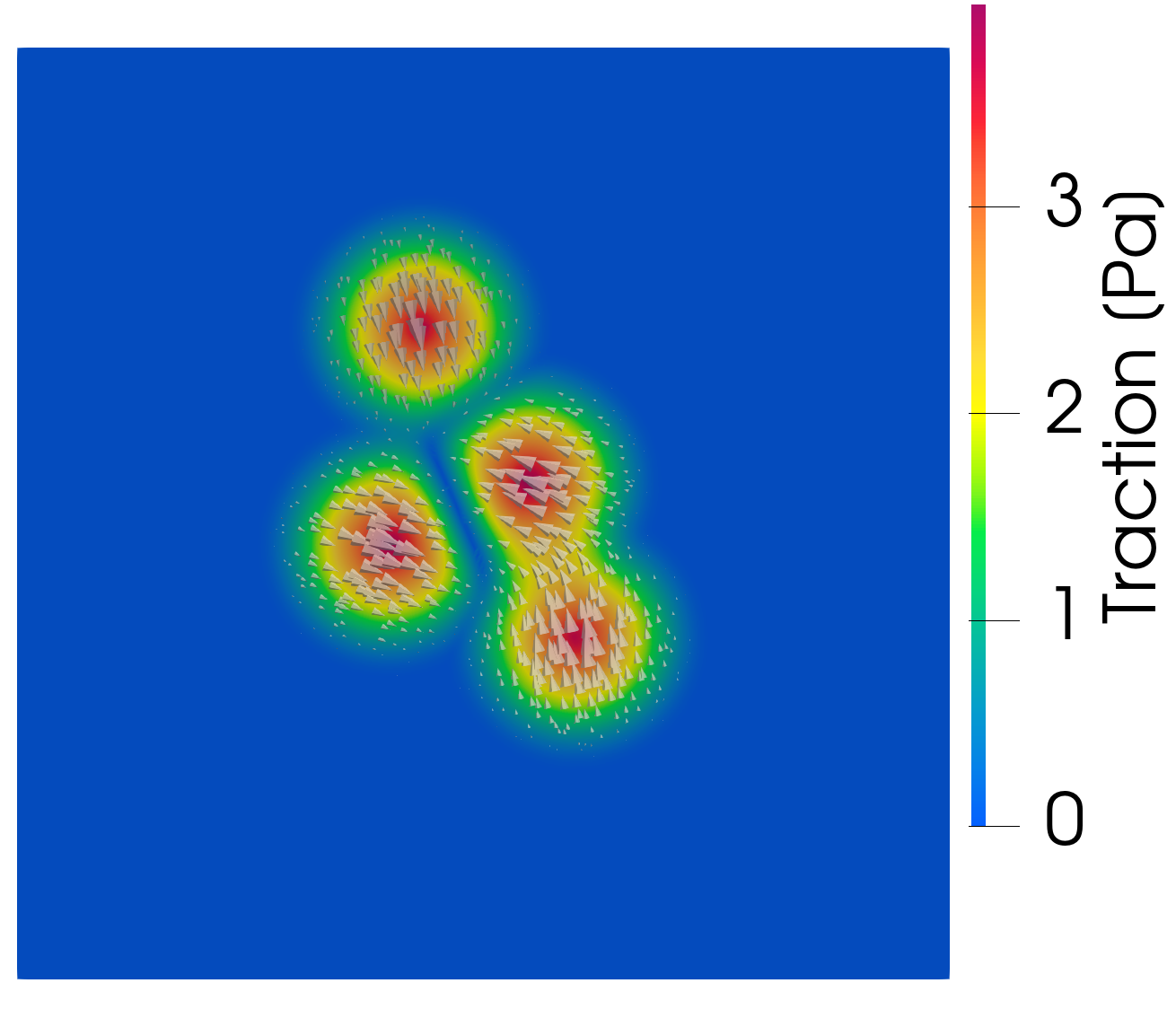}
\caption{Ground truth.}
\label{fig:2D_cell_true}
\end{subfigure}%
\vspace*{0.03\textwidth}
\begin{subfigure}[t]{0.33\textwidth}
\centering
\includegraphics[width=\linewidth]{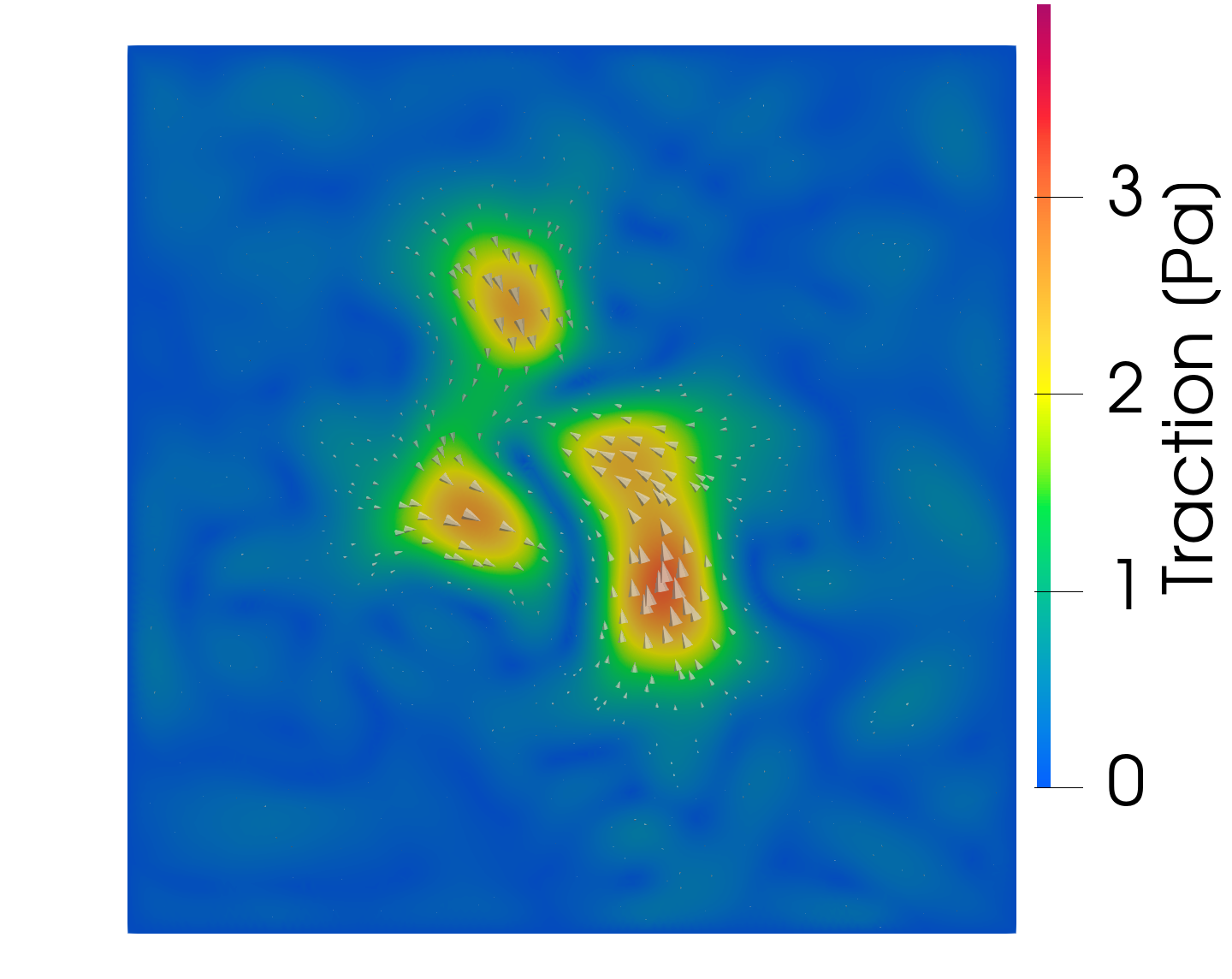}
\caption{Reconstruction $L^2$, low noise.}
\label{fig:2D_cell_small_L2}
\end{subfigure}%
\begin{subfigure}[t]{0.33\textwidth}
\centering
\includegraphics[width=\linewidth]{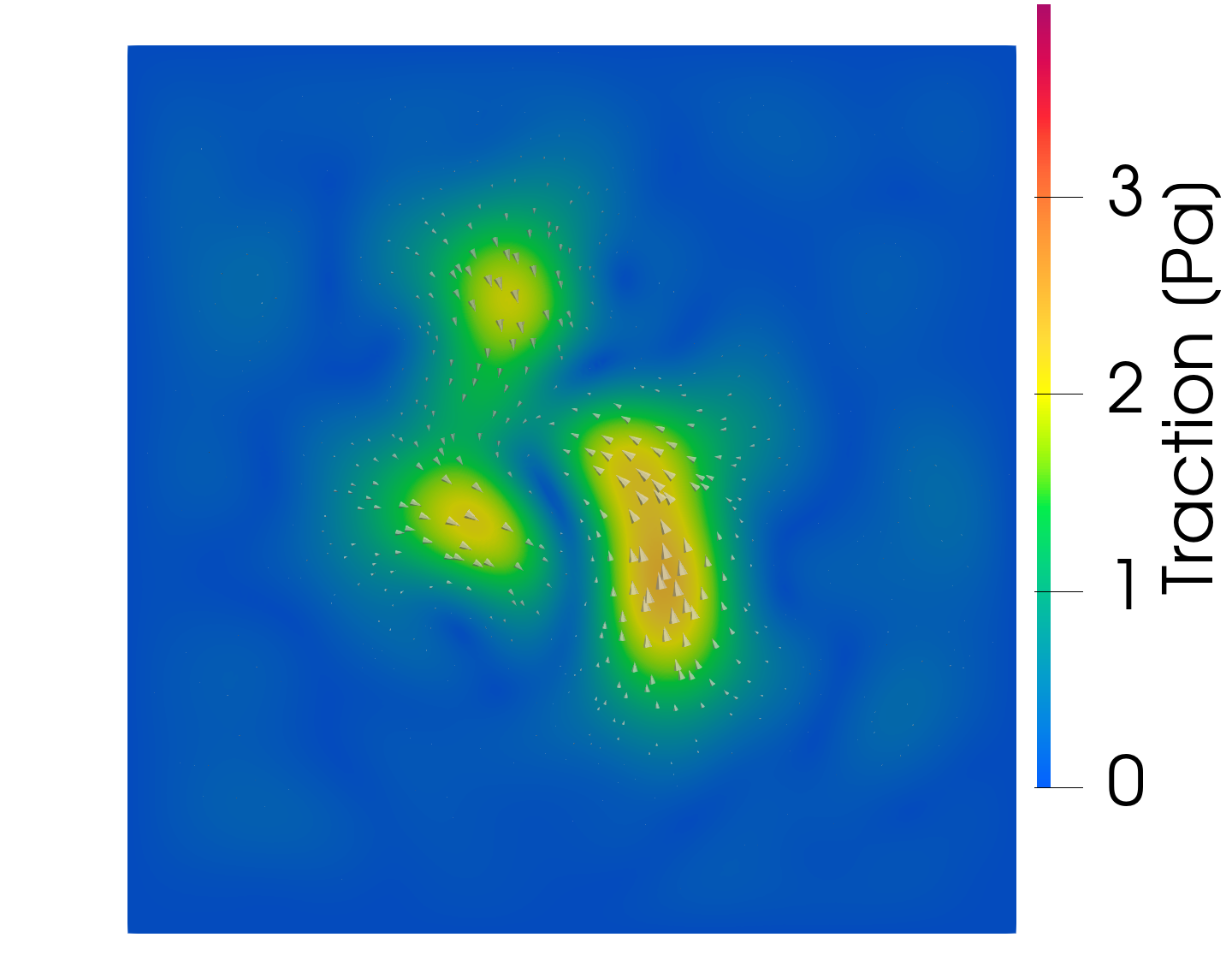}
\caption{Reconstruction $H^1_0$, low noise.}
\label{fig:2D_cell_small_H01}
\end{subfigure}
\\
\begin{subfigure}[t]{0.33\textwidth}
\centering
\includegraphics[width=\linewidth]{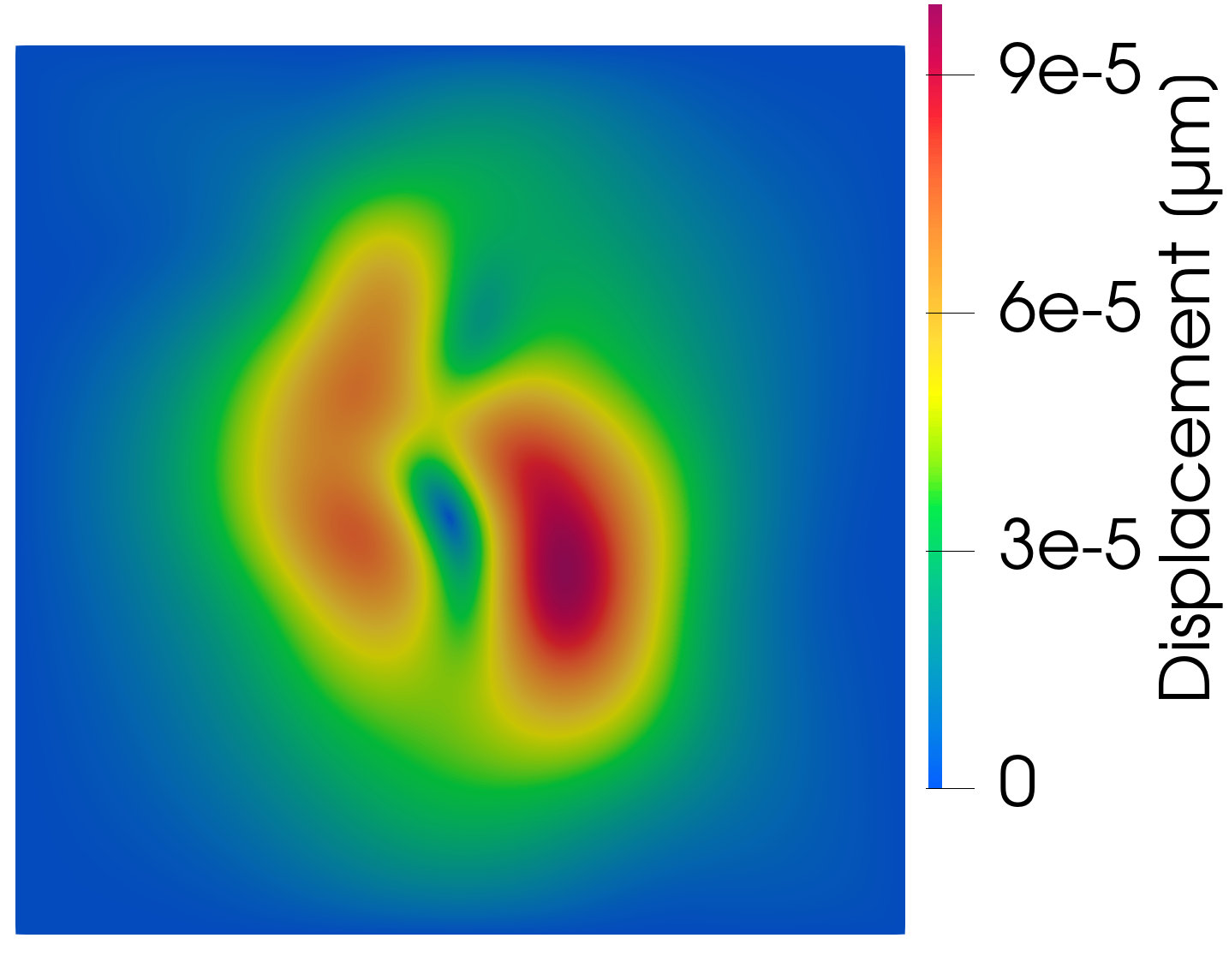}
\caption{Data.}
\label{fig:2D_cell_displacement}
\end{subfigure}%
\begin{subfigure}[t]{0.33\textwidth}
\centering
\includegraphics[width=\linewidth]{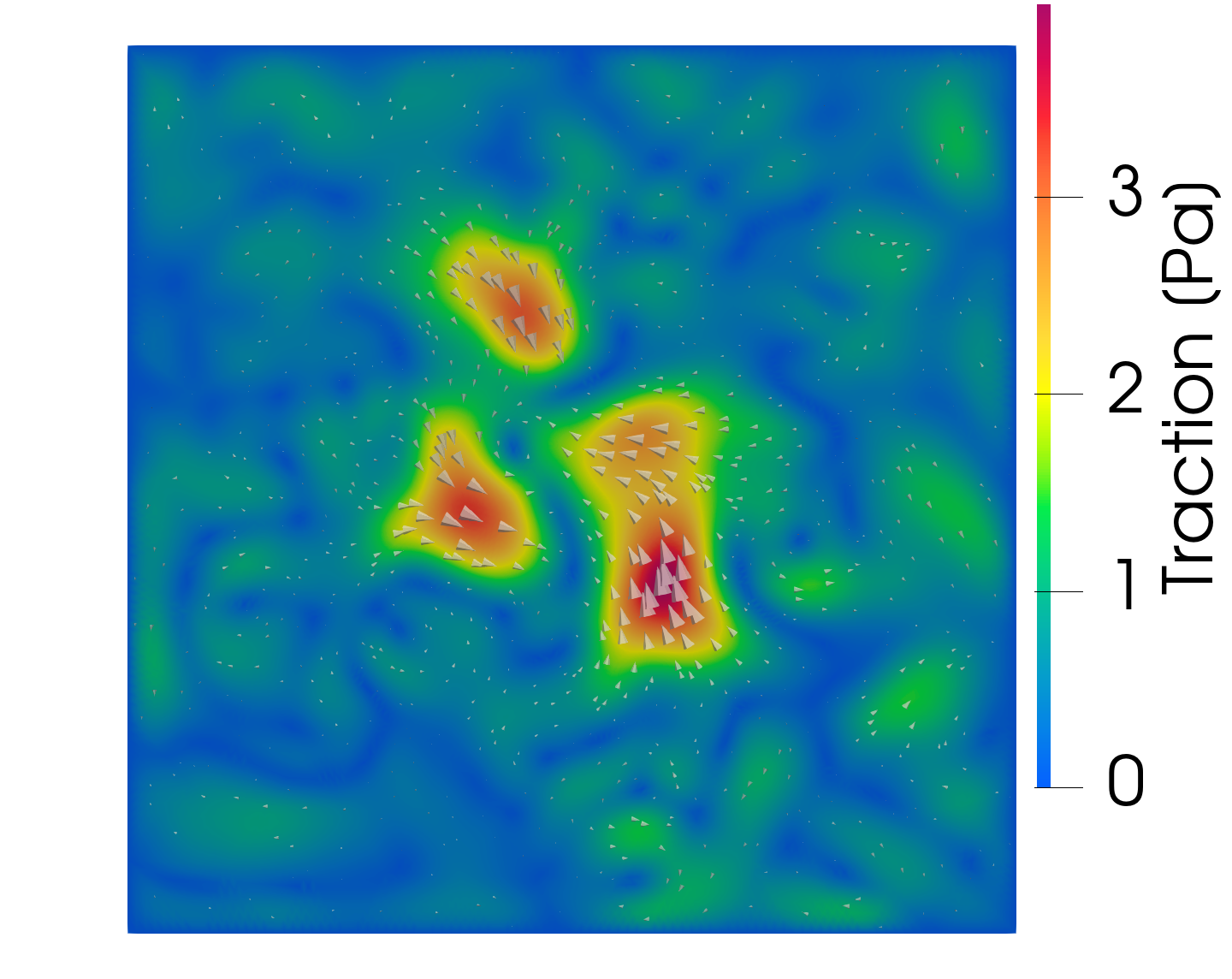}
\caption{Reconstruction $L^2$, high noise.}
\label{fig:2D_cell_big_L2}
\end{subfigure}%
\begin{subfigure}[t]{0.33\textwidth}
\centering
\includegraphics[width=\linewidth]{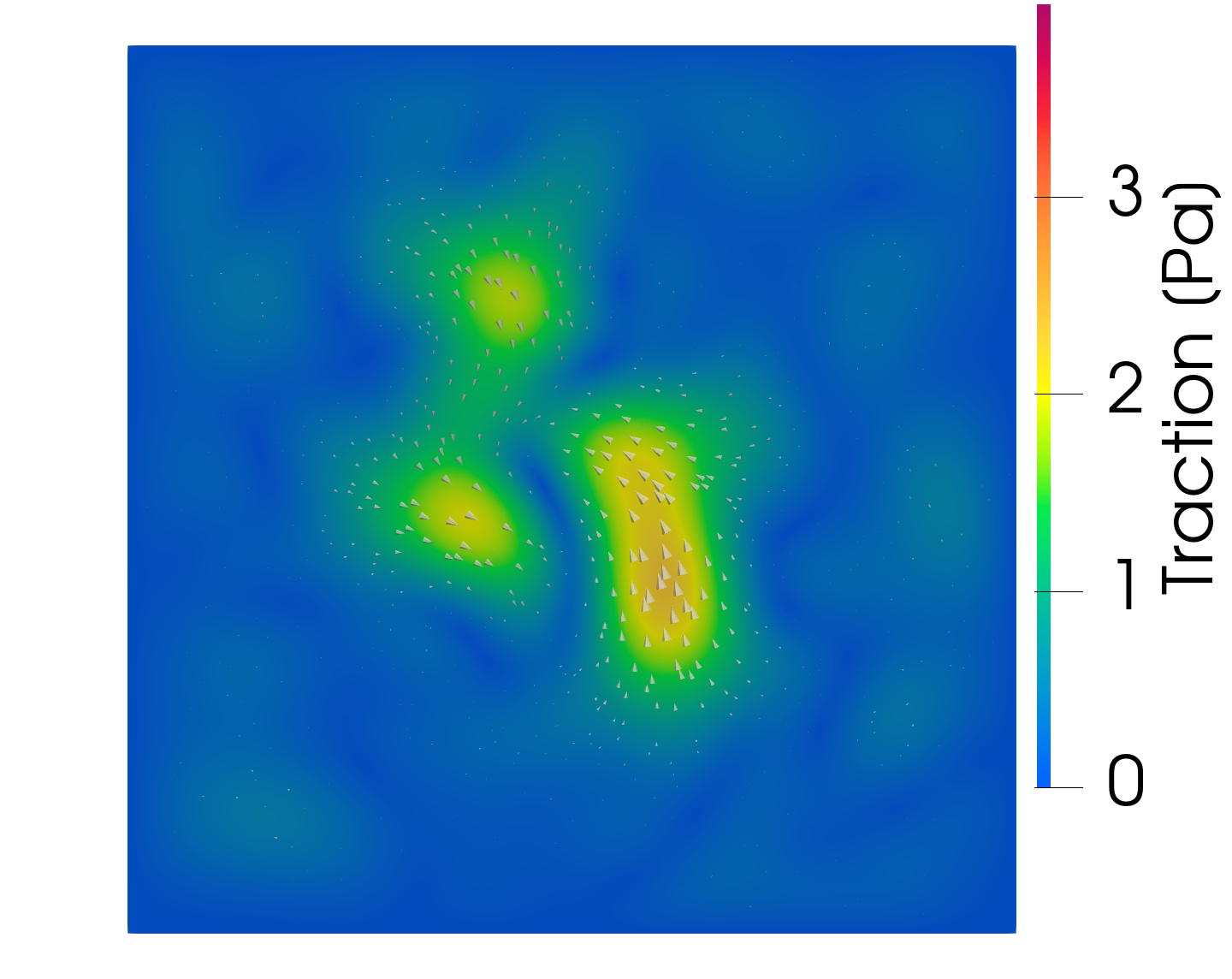}
\caption{Reconstruction $H^1_0$, high noise.}
\label{fig:2D_cell_big_H01}
\end{subfigure}
\caption{2D nonlinear TFM: simulated traction ground truth as in \eqref{eq:force2} (a), reconstruction with noise level of $7.81\%$ with $L^2$-penalty (b) and $H^1_0$-penalty (c), noise-free displacement data (d)  reconstruction with noise level of $15.62\%$ with $L^2$-penalty (e) and $H^1_0$-penalty (f).}
\label{fig:2D_cell}
\end{figure}

Even though the $L^2$-penalty does not incorporate the additional requirements on the traction forces $t$, the $L^2$-error is smallest for not too noisy data. However, this changes when the noise level gets larger. Then the $H^1_0$-penalty leads to the best result.

Using the $L^2$-penalty we get strong artifacts with larger noise, see Figures~\ref{fig:2D_cell_small_L2}, \ref{fig:2D_cell_big_L2}, whereas the noise gets smoothed out when using the $H^1_0$-penalty, see Figures~\ref{fig:2D_cell_small_H01}, \ref{fig:2D_cell_big_H01}. The gradient included in the $H^1_0$-norm could be the reason for this effect.

\subsection{Application to experimental data and comparison with a standard physics method}
\label{ssec:linearRealData}

In the end, we apply our method to measured linear 2D displacement data from a standard TFM experiment and compare it with the standard reconstruction method from physics, see e.g.~\cite{Schwarz2015}. 

For the TFM experiment, we plate NIH 3T3 fibroblast
cells on polyacrylamide (PAA) gels with a Young's
modulus of $34 \pm 1.6$ kPa. Fluorescently labeled carboxylated beads are added to the PAA solution as fiducial markers before it is polymerized on a treated glass bottom dish,
as described in detail, e.g., in~\cite{ZELENA2023, Hanke2018}. After polymerization, we coat the PAA gel with Sulfo-SANPAH
 and fibronectin, acting as an extracellular matrix (ECM) protein. TFM measurements are performed using an Olympus IX83 microscope under controlled physiological conditions.
The displacements are computed using a Kanade-Lucas-Tomasi (KLT) optical flow algorithm by comparing images of fiducial markers
of gels with cells and
of relaxed
gels. Then, traction forces are calculated using Fourier Transform Traction Cytometry (FTTC) with Tikhonov regularization~\cite{schwarzComparison}.

Since our purely 2D model is based on the older TFM model from~\cite{Dembo1996} that assumes thin substrates while the experiment is conducted under the assumptions of a not purely 2D model using thick substrates as in~\cite{Dembo1999}, it is not surprising that we observe differences in the reconstructions. 
However the results are clearly similar, see Figure~\ref{fig:real_data}.
Since our reconstructed traction has the same data range as the FTTC solution, our method also indicates that we indeed have an effective thickness $h \approx 1 \,\mu\mathrm{m}$ in this experiment.

To estimate the noise, we chose an area of the measured displacement that appears to have almost no displacement and therefore lies outside the cell boundary. Then we compute the $L^2$-norm on this area and scale the computed $L^2$-norm to the whole domain. This results in a relative noise level of approximately $18\%$. Here, the discrepancy principle stops the iteration after 90 steps with $\tau = 1.1$ in the discrepancy principle.

\begin{figure}
\centering
\begin{subfigure}{0.33\textwidth}
\centering
\includegraphics[width=\linewidth]{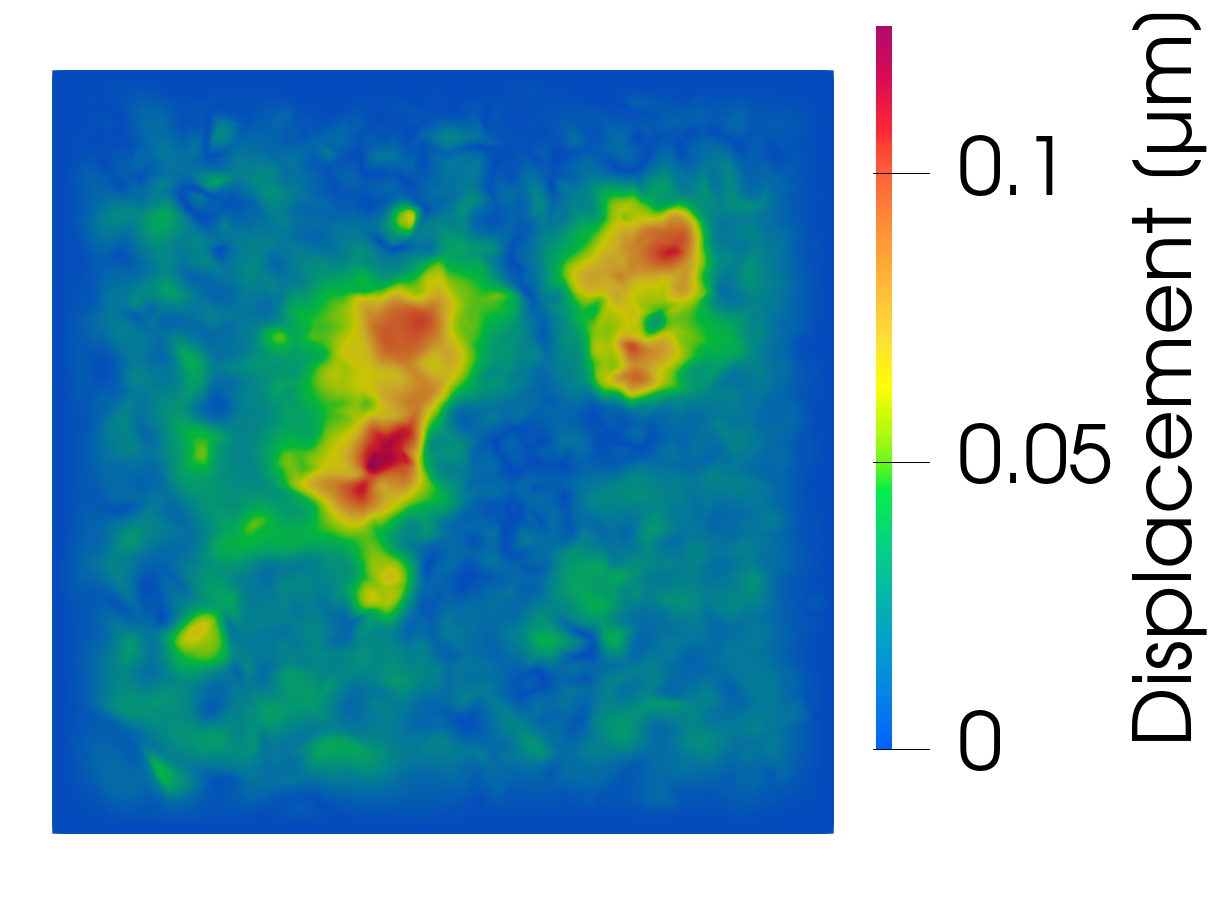}
\caption{Measured data.}
\end{subfigure}%
\hfill
\begin{subfigure}{0.33\textwidth}
\centering
\includegraphics[width=\linewidth]{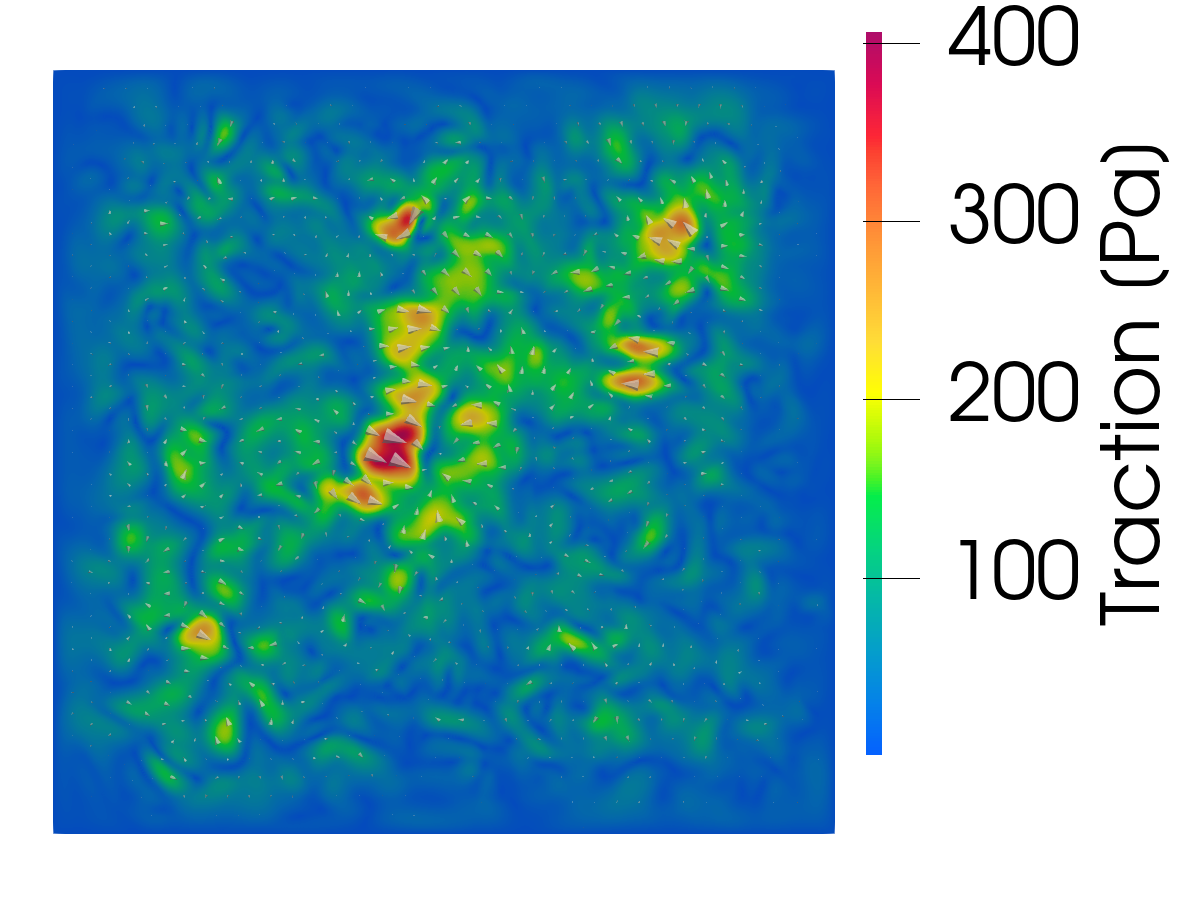}
\caption{Reconstruction.}
\end{subfigure}
\hfill
\begin{subfigure}{0.33\textwidth}
\centering
\includegraphics[width=\linewidth]{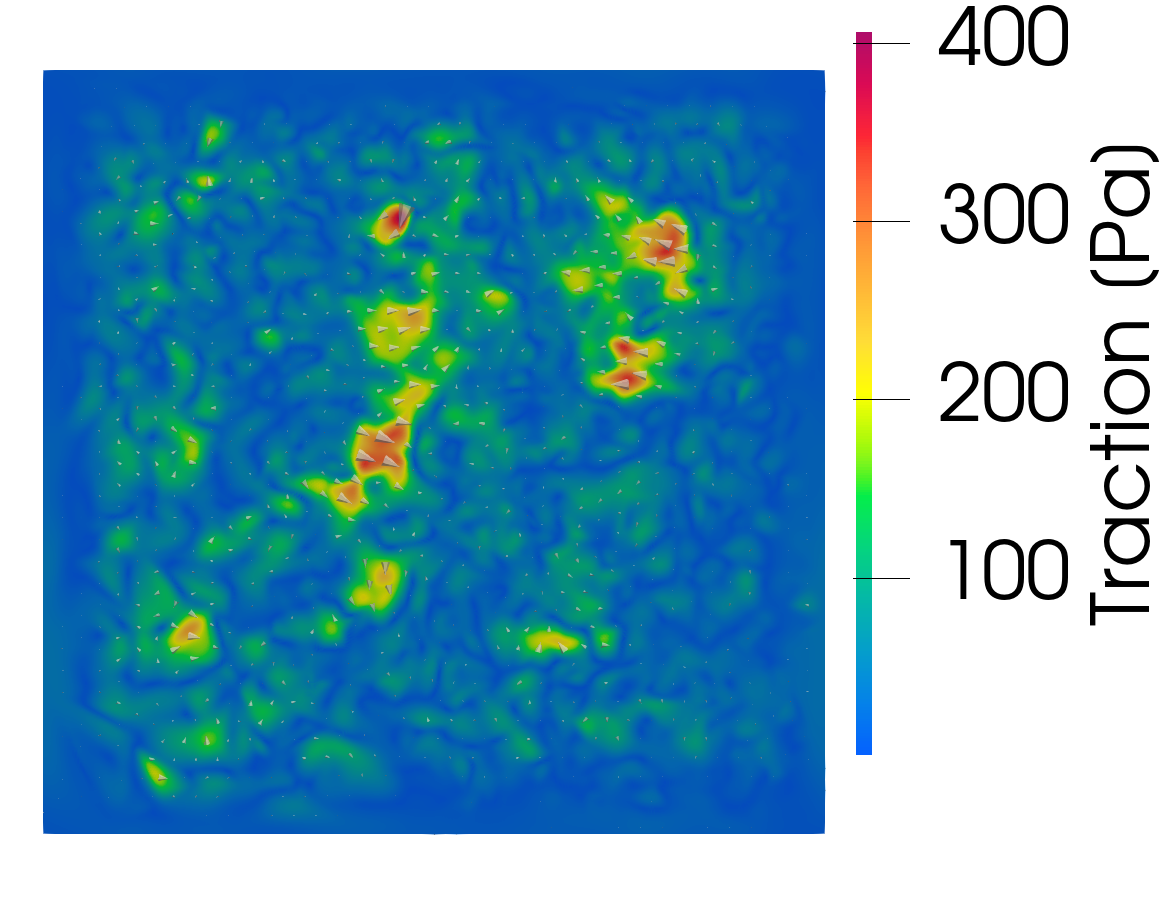}
\caption{FTTC.}
\end{subfigure}
\caption{2D linear TFM: real data (a), our reconstruction (b), standard physics method (c).}
\label{fig:real_data}
\end{figure}

\subsection{Comparison of pure linear and nonlinear 2D models}

To compare the pure nonlinear with the pure linear 2D model, we use the same traction force fields and compute the displacement using our nonlinear/linear forward model. For the linear 2D model, we use the 2D model~\eqref{eq2:tfmForward} with the linearized strain tensor and Hooke's law instead of the nonlinear constitutive equation~\eqref{eq2:constitutiveEquation}.

We use the force field described in equation~\eqref{eq:force1} with multiple values for the constant $a$ and compute the relative error in the $L^2$-norm of both displacements. 
Instead of using the absolute $L^2$-norm of the forces, we use the relative $L^2$-norm by dividing the absolute norm by the norm of the function that is constantly one on the domain.
The results are shown in Figure~\ref{fig:comparison_plot}.
This behavior motivates why mostly linear TFM is used in physics. As an example, the relative norm of the force reconstructed with FTTC (Figure~\ref{fig:real_data}) is $48.35$, where we observe a good correlation of the results from the linear and nonlinear model. This gives an indication that it is sufficient to use the linear model in this case.

As a further comparison, we generate noise-free data with the nonlinear model and force field~\eqref{eq:force1} with $a = 2 \cdot 10^5$ which corresponds to a relative norm of $5.39 \cdot 10^3$. If we compute a reconstruction using the linear model, we get a minimal error of $7.27 \%$ after $300$ CGNE iterations. Using the linear reconstruction as an initial guess, we can improve the reconstruction with the nonlinear model and get a minimal error of $2.58\%$ after $10$ Newton CG iterations, see Figure~\ref{fig:comparison_err_lin} and~\ref{fig:comparison_err_nl}.
Since the force has quite high values, we use the homotopy method with $10$ equidistant steps to improve the stability of the nonlinear forward solver. An example with explanations can be found in Section 3.4 of the ngsolve documentation.

\begin{figure}
    \centering
    \begin{minipage}[t]{0.6\textwidth}
        \centering
        \begin{subfigure}[t]{\textwidth}
        \centering
        \includegraphics[width=\linewidth]{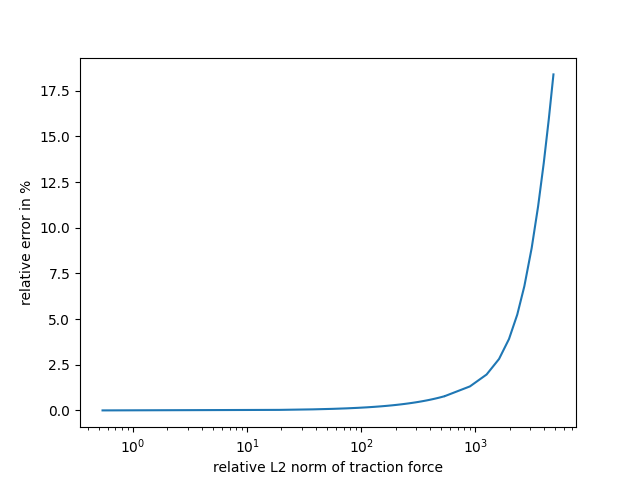}
        \caption{Comparison of linear and nonlinear forward model.}
        \label{fig:comparison_plot}
        \end{subfigure}
    \end{minipage}
    \hfill
    \begin{minipage}[t]{0.35\textwidth}
        \vspace{-45ex}
        \centering
        \begin{subfigure}[t]{\textwidth}
            \centering
            \includegraphics[width=\linewidth]{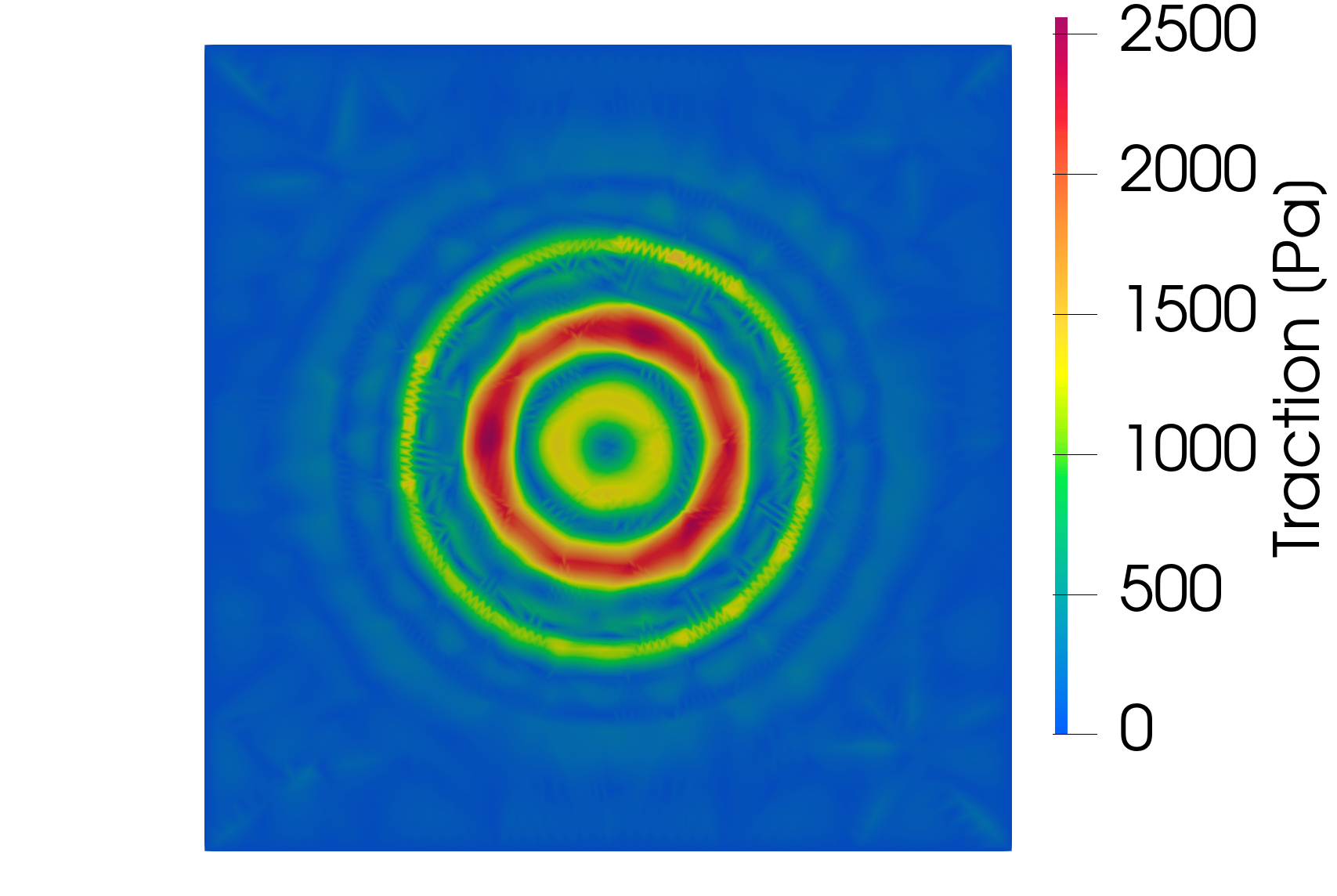}
            \caption{Error of linear reconstruction.}
            \label{fig:comparison_err_lin}
        \end{subfigure}
        \vspace{0.5cm} 
        \begin{subfigure}[t]{\textwidth}
            \centering
            \includegraphics[width=\linewidth]{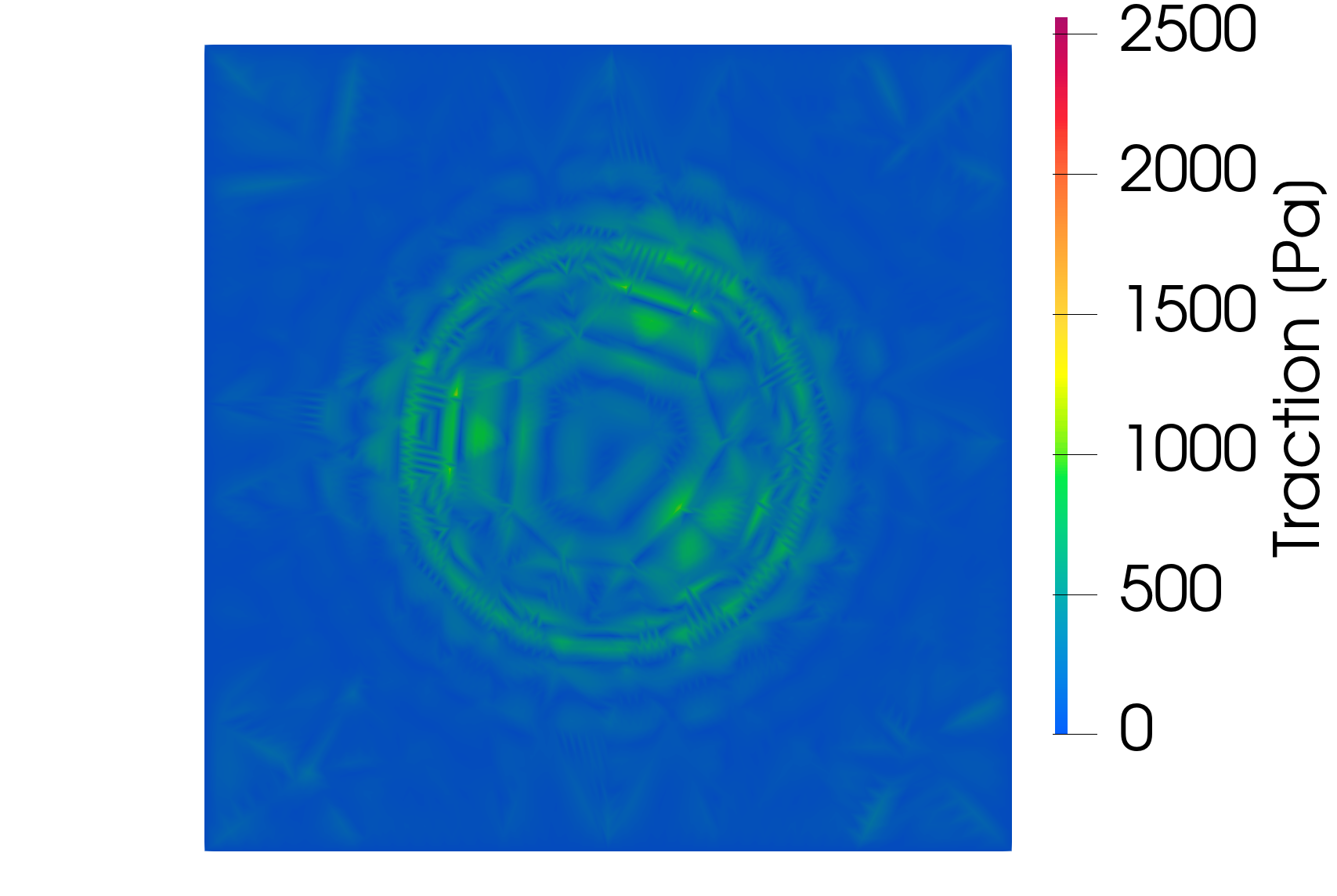}
            \caption{Error of nonlinear reconstruction.}
            \label{fig:comparison_err_nl}
        \end{subfigure}
    \end{minipage}
    \caption{Comparison of nonlinear and linear 2D model, (a) compares the linear and nonlinear forward model, (b) and (c) show the error of the linear/nonlinear reconstruction of force field~\eqref{eq:force1} with $a = 2 \cdot 10^5$ and noise-free data.}
    \label{fig:comparison}
\end{figure}



\section{Conclusion and outlook}

We described a forward model for linear 2.5D TFM and pure linear and nonlinear 2D TFM and analyzed the nonlinear parameter-to-state map $S$, see~\eqref{eq2:tfmForward}. In particular we discussed the well-definedness of the map $S$, showed Fr\'echet-differentiability and calculated its adjoint, which are essential to apply numerical regularization. Furthermore, we derived a suitable nonlinear material law leading to a nonlinear PDE. For this setting, we established a rigorous mathematical analysis for the inverse problem of TFM from biophysics. With the nonlinear approach we developed an inverse method that can be applied to nonlinear materials with high stresses. 
Our approach is developed in a continuous setting, with discretization applied only at the end to preserve the mathematical properties of the involved functions, particularly the solution, as long as possible. Furthermore, our analysis and the inclusion in the Python toolbox regpy~\cite{regpy} allow to easily use different regularization algorithms instead of just using Tikhonov regularization as commonly used in the physics community. In the linear 2D case, where we applied our methods to experimental data, we observe a good coincidence of our results and the ones obtained from the FTTC method. In the nonlinear 2D case and the linear 2.5D case, we obtained conclusive results for synthetic data and were able to reduce artifacts by incorporating the nature of the traction forces.

An advantage of the proposed approach is its flexibility allowing for a number of 
extensions and variations. For example, we could work with other specific nonlinear material laws. Additionally, other a-priori information as, e.g., known support of the traction stress, the equilibrium condition,  symmetry assumptions, or sparsity with respect to certain frames, could be incorporated into the optimization problem. 
Furthermore, the mathematical analysis and implementation of TFM could be extended to 3D TFM.


\vspace*{3ex}

\paragraph{Data availability statement}
The code and data that support the findings of this article are openly available at the following Git repository: \url{https://gitlab.gwdg.de/sarnighausen/traction-force-microscopy} 

\paragraph{Acknowledgements}
The authors thank Emily Klass for help with the implementation. \\
G. Sarnighausen, T.~Nguyen, A.~Wald, T.~Hohage and S.~K\"oster  acknowledge funding by Deutsche Forschungsgemeinschaft (DFG, German Research Foundation) – CRC 1456 (project-ID 432680300), projects A04, B06 and C03. 
A.~Wald, S.~K\"oster and T.~Betz additionally acknowledge support by DFG RTG 2756 (project-ID 449750155), projects A4, A7, B3. S.~Köster,
T.~Betz and U.S.~Schwarz are members of the
Max Planck School Matter to Life supported by the German Federal Ministry of Education and Research (BMBF) in collaboration with the Max Planck Society. 


\vspace*{1ex}

\appendix

\section{Proof of Lemma~\ref{th2:existence Ball}}
\label{Proof_Existence}

We use $\lesssim$, $\gtrsim$ and $\simeq$ for relations up to multiplicative constants independent of $u$ and $v$.\\
\textbf{(i) Preliminaries.}
            To ensure the positivity constraint we define 
        \begin{align*}
            \bar{P}(F,\det F) = 
                \begin{cases}
                     P(F,\det F) & \text{if } \det F > 0 \\
                     +\infty & \text{if } \det F \leq 0
                \end{cases}
        \end{align*}
        with the function $P$ as in Definition~\ref{def:coercivity}.
        The convex function $P$ is continuous according to Proposition 47.5 in \cite{Zeidler1997NonlinearFA4}. Therefore the function
            $
                \bar{P} : \M^2 \times (0, \infty) \to (-\infty, \infty]
            $
        is continuous because of~\eqref{eq:existence,limit}.
        Furthermore, we set 
            \begin{align*}
                X &\coloneqq W^{1,p}(\Omega, \R^2) \times L^s(\Omega,\R^2) \\
                Hu(x) &\coloneqq \big(x + u(x), \det(I + \nabla u(x)\big) \in X
            \end{align*}
            for $u \in W^{1,p}(\Omega, \R^2)$.
            
(a) On the space $W^{1,p}(\Omega,\R^2)$ we introduce the norm
                    \begin{align*}
                         \lVert v \rVert_{1,p} \coloneqq \left( \int_{\Omega} \lvert \nabla v \rvert^p_{\mathrm{F}} \dx + \int_{\partial \Omega} \lvert v \rvert^p_{\mathrm{F}} \ds \right)^{\frac{1}{p}},
                    \end{align*}
                    which is equivalent to the standard norm on the space $W^{1,p}(\Omega,\R^2)$, see e.g.~\cite[\S 114, Theorem 3]{smirnov}.
                    Then, for all elements $v(x) = x + u(x)$ with $u \in U$, we have
                    \begin{align}
                        \label{eq2:proofExistenceNorm}
                        \lVert v \rVert^p_{1,p} \lesssim \lVert \nabla v \rVert^p_{L^p(\Omega,\R^2)} + C_{u_0}\qquad \mbox{with}
                        \qquad C_{u_0}:=\lVert x + u_0 \rVert^p_{L^p(\partial \Omega, \R^2)}
                    \end{align}
                    since for $u \in U$ holds $u = u_0$ on the boundary $\partial \Omega$ and thus the boundary integral has the same value for all elements $u \in U.$ 
                    On the space $X$ we choose the norm 
                    $
                        \lVert v \rVert_X \coloneqq \lVert v_1 \rVert_{1,p} + \lVert v_2 \rVert_s
                    $
                    for $v = (v_1,v_2) \in \X$.
                    
(b) Theorem of Mazur. Let the sequence $(v_n)$ in an arbitrary Banach space $Y$ converge weakly to some element $v \in Y$, i.e., $v_n \rightharpoonup v$ in $Y$ as $n \to \infty$.
                    Then there exist convex linear combinations $w_n = \sum_{i=1}^{N(n)} \lambda_{ni} v_i$ with $\sum_{i=1}^{N(n)} \lambda_{ni} = 1$, $\lambda_{ni} > 0$ such that $N(n) \to \infty$ as $n \to \infty$, and we have strong convergence $w_n \to v$ in the space $Y$ as $n \to \infty$, see, e.g.~\cite[Lemma 3.1.20]{buehlerFunctional}.
                    
(c) Recall that in case of convergence $f_n \to f$ in $L^p(\Omega)$ as $n \to \infty$ and $1 \leq p \leq \infty$, there exists a subsequence -- for simplicity also called $f_n$ -- such that the subsequence converges almost everywhere, i.e., $f_{n}(x) \to f(x)$ a.e.~in $\Omega$ as $n \to \infty$, according to the Riesz-Fischer theorem~\cite[chapter VI,\textsection 2]{elstrodt2018Mass}.

\textbf{(ii) Minimal sequence.} 
                Define the infimum $m \coloneqq \inf_{u \in U} G(u)$. We show that there exists an element $u^* \in U$ with $G(u^*) = m$. Due to the coercivity we have $- \infty < G(u) \leq \infty$ for all elements $u \in U$. Since $G(u_0) < \infty$, the infimum $m$ is finite. 
                
                We choose a sequence $(u_n)$ in the space $U$ such that $G(u_1) \geq G(u_2) \geq \ldots \geq G(u_n) \to m$ as $n \to \infty$ and show that the sequence $(H u_n)$ is bounded in $X$.
                Using \eqref{eq2:proofExistenceNorm} and Definition~\ref{def:coercivity} we obtain
                    \begin{align*}
                         \tilde{H}(u_n) &\hspace{1.1ex}\coloneqq \lVert x + u_n(x) \rVert^p_{1,p} + \lVert \det(I + \nabla u_n) \rVert^s_{L^s(\Omega,\R^2)} \\
                        &\stackrel{\eqref{eq2:proofExistenceNorm}}{\lesssim}\lVert I + \nabla u_n(x)) \rVert^p_{L^p(\Omega,\R^2)} + \lVert \det(I + \nabla u_n) \rVert^s_{L^s(\Omega,\R^2)}  + C_{u_0}\\
                        &\hspace{1.3ex}\simeq  \lVert I + \nabla u_n(x)) \rVert^p_{L^p(\Omega,\R^2)} + \lVert \det(I + \nabla u_n) \rVert^s_{L^s(\Omega,\R^2)}  + C_{u_0} - \frac{1}{p} \lVert I + \nabla u_n(x)) \rVert^p_{L^p(\Omega,\R^2)} \\
                        &\hspace{1.1ex}\stackrel{\ref{def:coercivity}}{\lesssim} \int_\Omega  P(I + \nabla u_n, \det (I + \nabla u_n) \dx - \frac{1}{p} \lVert I + \nabla u_n(x)) \rVert^p_{L^p(\Omega,\R^2)}  + C_{u_0} - D.
                    \end{align*}
                To get rid of the power $p$ we use the Bernoulli inequality
                which states $(1+x)^p \geq 1 + px$ for $x \geq -1$ and $p \geq 1$.
                With the substitution $a = 1 + x$ it follows
                $
                    -\frac{a^p}{p} \leq -a - C_p
                $
                for $a \geq 0$ with 
                $C_p:=(1-p)/p$. Thus
                \begin{align*}
                    \tilde{H}(u_n)
                    \lesssim & \int_\Omega  P(I + \nabla u_n, \det (I + \nabla u_n) \dx - \lVert I + \nabla u_n(x)) \rVert_{L^p(\Omega,\R^2)} - C_p  + C_{u_0} - D.
                \end{align*}

                To relate $\lVert I + \nabla u_n(x) \rVert_{L^p(\Omega,\R^2)}$ and $\lVert u_n(x) \rVert_{1,p}$ we observe
                \begin{align}
                    \label{eq2:proofExistenceRelation}
                    \begin{split}
                    \lVert u_n(x) \rVert_{1,p}
                    &= \lVert I + \nabla u_n(x) - I \rVert_{L^p(\Omega,\R^2)} + \lVert u_0\rVert_{L^p(\partial \Omega, \R^2)} \\
                    &\leq \lVert I + \nabla u_n(x) \rVert_{L^p(\Omega,\R^2)} + \lVert u_0\rVert_{L^p(\partial \Omega, \R^2)} + \lVert I \rVert_{L^p(\Omega,\R^2)}
                    \end{split}
                \end{align}
                \vspace{-2.5ex}
                and with $C_{u'_0}:=\lVert u_0\rVert_{L^p(\partial \Omega, \R^2)} + \lVert I \rVert_{L^p(\Omega,\R^2)}$
                we get
                \begin{align*}
                    \tilde{H}(u_n)
                    \stackrel{\eqref{eq2:proofExistenceRelation}}{\lesssim} & \int_\Omega  P(I + \nabla u_n, \det (I + \nabla u_n) \dx - \lVert u_n(x) \rVert_{1,p} + C_{u'_0} - C_p  + C_{u_0} - D.
                \end{align*}
                Next we estimate the $W^{1,p}$-norm by the $L^{\frac{p}{p-1}}$-norm.
                The embedding $W^{1,p}(\Omega,\R^2) \hookrightarrow L^{\frac{p}{p-1}}(\Omega,\R^2)$ is compact and thus continuous due to the Rellich-Kondrachov theorem
                \begin{align}
				\label{eq2:proofExistenceEmbedding}
				    W^{k_1,p_1}(\Omega,\R^2) \hookrightarrow W^{k_2,p_2}(\Omega,\R^2), 
                \quad \text{if  } \frac{k_1 - k_2}{2} > \frac{1}{p_1} - \frac{1}{p_2}, k_1 \geq k_2,
				\end{align} 
                for $k_1, k_2 \in \N_0, p_1, p_2 \geq 1, $ see, e.g.,~\cite[Chapter 6]{adams2003sobolev}, which is fulfilled for $k_1 = 1$, $k_2 = 0$, $p_1 = p$ and $p_2 = \frac{p}{p-1}$ because $p \geq 2$. Finally we get
                \begin{align*}
                    \tilde{H}(u_n)                         
                    &\stackrel{\eqref{eq2:proofExistenceEmbedding}}{\lesssim} \int_\Omega W(I + \nabla u_n) \dx - \lVert T \rVert_{L^p(\Omega,\R^2)} \lVert u_n(x) \rVert_{L^{\frac{p}{p-1}}(\Omega,\R^2)} + C_{u'_0} - C_p  + C_{u_0} - D \\
                       &\hspace{1.5ex} \lesssim \int_\Omega W(I + \nabla u_n) \dx - \int_\Omega T u_n \dx + C_{u'_0} - C_p  + C_{u_0} - D\\
                       &\hspace{1.5ex} = G(u_n) + C_{u'_0} - C_p  + C_{u_0} - D
                \end{align*}
                This is bounded because all the constants are bounded and $G(u_n) \leq G(u_1)$ for all $n \in \N$.
                Now since $\lVert x + u_n(x) \rVert^p_{1,p} + \lVert \det(I + \nabla u_n) \rVert^s_{L^s(\Omega,\R^2)}$ is bounded, the expression without powers $\lVert H u_n \rVert_X$ is bounded as well.

\textbf{(iii) Weak and strong convergence.} 
                Since the space $X$ is reflexive, there exists a subsequence, denoted again by $(H u_n)$ that converges weakly to an element $v \in X$, i.e., $Hu_n \rightharpoonup v$ in $X$ as $n \to  \infty$. 
                Theorem 6.2 in \cite{Ball1976ConvexityCA} implies $v = Hu^*$ for an element $u^* \in U$.
                
                For each element $v_n = Hu_n$, there exists a convex linear combination $w_n$ of $H u_i$ from Mazur's theorem as in (i)b). By possibly passing to a subsequence, it holds $w_n \to Hu^*$ in $X$ and $w_n(x) \to Hu^*(x)$ a.e.~in $\Omega$ as $n \to \infty$.

\textbf{(iv) Lemma of Fatou.}
                The convexity of the function $P$ yields
                $
                    P(w_n) \leq \sum_{i=1}^{N(n)} \lambda_{ni} P(Hu_i).
                $
                From the coercivity condition follows $P(w_n) \geq D$ for all $n$ with the constant $D$ from~\eqref{def:coercivity}, and the Lemma of Fatou (see e.g. \cite[Appendix]{Zeidler1997NonlinearFA4}) and pointwise convergence of the convex linear combinations $w_n$ from (iii) yield
                \begin{align*}
                    \int_{\Omega} \bar{P}(u^*) \dx = \int_{\Omega} \lim_{n \to \infty} P(w_n) \dx \leq \lim_{n \to \infty} \int_{\Omega} \sum_{i = 1}^{N(n)} \lambda_{ni} P(H u_i) \dx.
                \end{align*}
                We define $\bar{G}$ by $\bar{G}(u) = \int_\Omega P(F,\det F) - Tu \dx$. Then we get 
                \begin{align}
                \label{eq:convexComb}
                    \bar{G}(u^*) = \int_\Omega \bar{P}(u^*) \dx - \int_\Omega T u^* \dx 
                     \leq \lim_{n \to \infty} \sum_{i=1}^{N(n)} \lambda_{ni} G(u_i) \stackrel{(1)}{=} \lim_{n \to \infty} G(u_{N(n)}) = m
                \end{align}
                where equality (1) holds because the expressions $G(u_{N(n)})$ and $\sum_{i=1}^{N(n)} \lambda_{ni} G(u_i)$ have the same limit due to the condition $\sum_{i=1}^{N(n)} \lambda_{ni} = 1$, $\lambda_{ni} > 0$. 
                So the limit point $u^*$ is a minimizer of the energy $G$.
                
\textbf{(v)} It remains to show that the minimizer $u^*$ is an element of the space $U$.
                Due to the condition $\bar{P}(u^*) < \infty$, 
                the construction of the function $\bar{P}$ yields $\det(I + \nabla u^*(x)) > 0$ a.e.~in $\Omega$. 
                Since $u_n = u_0$ on the boundary $\partial \Omega$ for all $n$, 
                and since convex linear combinations of the elements $u_n$ converge to the minimizer $u^* \in W^{1,p}(\Omega, \R^2)$, 
                we also have $u^* = u_0$ on the boundary $\partial \Omega$. 
                Thus \eqref{eq:convexComb} is equivalent to $G(u^*) = m$ and $u^* \in U$.  


\bibliographystyle{plain}
\bibliography{lit}

\pagestyle{myheadings}
\thispagestyle{plain}

\end{document}